\newtheorem{theorem}{Theorem}[section]
\newtheorem{example}[theorem]{Example}
\newcommand{\Real}{\mathbb R}
\newcommand{\Net}{\mathbb N}
\newcommand{\one}{\mathbb{1}}
\newcommand{\zero}{\mathbb{0}}
\newcommand{\trop}[1]{\mathcal{#1}}
\newcommand{\tG}{\trop{G}}
\newcommand{\tT}{\trop{T}}
\newcommand{\tV}{\trop{V}}
\newcommand{\om}{\omega}
\newcommand{\al}{\alpha}
\newcommand{\bt}{\beta}
\newcommand{\lm}{\lambda}
\newcommand{\id}{\inva{\aad}}
    \newenvironment{proof}{
    \smallskip
    \noindent\emph{Proof.}}{\hfill\(\Box\)
    \bigskip
    } \fi
\newcommand{\ifdef}[3]{\ifthenelse{\equal{#1}{true}}{#2}{#3}}
\def\hfA{\Inu{f_A}}
\def\tlv{\tilde v}
\def\tlV{\widetilde V}
\def\tlB{\widetilde B}
\def\hV{\widehat V}
\def\hB{\widehat B}
\def\tt{\eta}
\def\tu{u}
\def\bbt{\overline \bt}
\newcommand{\Inu}[1]{\widehat{#1}}
\def\mzero{(\zero)}
\def\vzero{\overline{\zero}}
\def\tlmu{\tilde \mu}
\def\GP{A}
\def\core{\operatorname{core}}
\def\mV{\mathcal V}
\def\tancore{\operatorname{tcore}}
\def\tcore{\operatorname{tcore}}
\def\tancoreA{\tancore(A)}
\def\Vacore{\tV(\acore(A))}
\def\acore{\operatorname{anti-tcore}}
\def\coreA{\core(A)}
\def\tcoreA{\tancore(A)}
\def\acoreA{\acore(A)}
\def\semiring0{semiring$^\dagger$}
\def\semirings0{semirings$^\dagger$}
\def\domain0{domain$^\dagger$}
\newcommand{\weight}[1]{w(#1)}
\newcommand{\len}[1]{\ell(#1)}
\def\domains0{domains$^\dagger$}
\def\field0{semifield$^\dagger$}
\def\fields0{semifields$^\dagger$}
\def\Gker{\operatorname{g-ker}}
\def\hnu{\hat \nu}
\def\Gker{\operatorname{g-ker}}
\def\pSkip{\vskip 1.5mm \noindent}
\def\essn{{\operatorname{es}}}
\def\pipeGS{{\underset{\operatorname{\, gs }}{\mid}}}
\def\lmod{\mathrel   \pipeGS \joinrel\joinrel \joinrel =}
\def\lmodg{\lmod}
\def\ggsim{\, \, \curlyvee \,}
\def\gsim{ {\underset{\operatorname{gd}}{\ggsim }} }
\def\nucong{\cong_\nu}
\def\nnucong{\not \cong_\nu}
\def\nuge{\ge_\nu}
\newcommand{\etype}[1]{\renewcommand{\labelenumi}{(#1{enumi})}}
\def\eroman{\etype{\roman}}
\def\ealph{\etype{\alph}}
\newcommand{\Det}[1]{ |{#1}|}
\def\tGz{\mathcal G_\zero}
\def\tTz{\mathcal T_\zero}
\def\({\left(}
\def\){\right)}
\def\multi{multicycle}
\def\scycle{scycle}
\def\trn{{\operatorname{t}}}
\def\cyc{C}
\def\grph{G}
\def\igrph{\widetilde{G}}
\def\quasi{quasi}
\def\tGz{\mathcal G_\zero}
\def\ghost{\text{ghost}}
\def\a{\alpha}
\def\la{\lambda}
\def\sig{\sigma}
\def\one{\mathbb{1}}
\def\zero{\mathbb{0}}
\def\regular{nonsingular}
\def\um{I}
\def\id{\operatorname{id}}
\def\ef{f^{\operatorname{es}}}
\newcommand{\trace}[1]{\operatorname{tr}(#1)}
\def\rone{\one_R}
\def\rzero{\zero_R}
\def\fzero{\zero_F}
\def\la{\lambda}
\newtheorem{thm}[theorem]{Theorem}
\newtheorem*{thm*}{Theorem}
\newtheorem*{dig*}{Digression}
\newtheorem{cor}[theorem]{Corollary}
\newtheorem{lem}[theorem]{Lemma}
\newtheorem{rem}[theorem]{Remark}
\newtheorem{prop*}{Proposition}
\newtheorem{prop}[theorem]{Proposition}
\newtheorem{defn}[theorem]{Definition}
\newtheorem*{examp*}{Example}
\newtheorem*{examples*}{Examples}
\newtheorem*{remark*}{Remark}
\newtheorem*{defn*}{Definition}
\newtheorem*{note*}{Note}
\begin{document}
\title[Supertropical Matrix Algebra III: Powers of matrices and generalized eigenspaces]
{Supertropical Matrix Algebra III: \\Powers of matrices and
generalized eigenspaces}

\author{Zur Izhakian}
\address{Department of Mathematics, Bar-Ilan University, Ramat-Gan 52900,
Israel} \email{zzur@math.biu.ac.il}
\author{Louis Rowen}
\address{Department of Mathematics, Bar-Ilan University, Ramat-Gan 52900,
Israel} \email{rowen@macs.biu.ac.il}

\thanks{This research is supported  by the
Israel Science Foundation (grant No.  448/09).}

\thanks{The first author has been a Leibniz Fellow in the
Oberwolfach Leibniz Fellows Programme (OWLF), Mathematisches
Forschungsinstitut Oberwolfach, Germany.}


\subjclass[2010]{Primary: 15A03, 15A09,  15A15, 65F15; Secondary:
16Y60, 14T05. }

\date{\today}

\keywords{Tropical algebra, powers of matrices, nilpotent  and
ghostpotent matrices, eigenspaces, Jordan decomposition.}


\begin{abstract}
We investigate powers of supertropical matrices, with special
attention to the role of the coefficients of the  supertropical
 characteristic polynomial (especially the supertropical trace) in controlling the rank of a power of a
 matrix. This  leads to a Jordan-type decomposition
  of  supertropical matrices, together with
  a generalized eigenspace decomposition of a power of an
 arbitrary supertropical matrix.
\end{abstract}

\maketitle




\section{Introduction}
\numberwithin{equation}{section}

This paper develops further the tropical matrix theory from the
point of view of supertropical algebras, whose foundation  was
laid out in \cite{IzhakianRowen2008Matrices} and further analyzed
in \cite{IzhakianRowen2009Equations};  a treatment from the point
of view of linear algebra is to be given in
\cite{IzhakianKnebuschRowen2009Linear}.

We recall briefly the main idea behind  supertropical matrix
theory. Although matrix theory over the max-plus semiring is
hampered by the lack of negatives, the use of the ``ghost ideal''
in supertropical domains enables one to recover most of the
classical spirit (and theorems) of matrix theory, and some of the
proofs actually become easier than in the classical case. For
example, instead of taking the determinant, which requires $-1$,
one defines the \textbf{supertropical determinant} to be the
permanent, and we define a matrix to be \textbf{nonsingular} when
its supertropical determinant is not a ghost.  Likewise, vectors
are called  \textbf{tropically dependent} when some linear
combination (with tangible coefficients) is a ghost vector;
otherwise, they are \textbf{tropically independent}.

Prior supertropical results include the fact that the rows (or
columns) of a matrix are tropically dependent iff the matrix is
nonsingular (\cite[Theorem~6.5]{IzhakianRowen2008Matrices}), and,
more generally, the maximal number of tropically independent rows
(or columns) is the same as the maximal size of a nonsingular
submatrix, cf.~\cite{IzhakianRowen2009TropicalRank}. A key tool is
the use of \textbf{\quasi-identity} matrices, defined below as
nonsingular multiplicative  idempotent matrices equal to the
identity matrix plus a ghost matrix. Quasi-identity matrices were
obtained in \cite[Theorem~2.8]{IzhakianRowen2009Equations} by
means of the adjoint, which was used in \cite[Theorems~3.5 and
~3.8]{IzhakianRowen2009Equations} to solve equations via a variant
of Cramer's rule, thereby enabling us to compute supertropical
eigenvectors in \cite[Theorem~5.6]{IzhakianRowen2009Equations}.
 Furthermore, any matrix $A$ satisfies its \textbf{characteristic
polynomial}~ $f_A:= |\la I + A|$, in the sense that $f_A(A)$ is
ghost (\cite[Theorem~5.2]{IzhakianRowen2008Matrices}), and the
tangible roots of $f_A$ turn out to be the supertropical
eigenvalues of $A$
(\cite[Theorem~7.10]{IzhakianRowen2008Matrices}).
However, something seems to go wrong, as indicated in
 \cite[Example~5.7]{IzhakianRowen2009Equations}, in which it is
 seen that even when the characteristic polynomial is a product of
 distinct tangible linear factors, the supertropical eigenvectors need not
 be tropically independent.

This difficulty can be resolved by passing to asymptotics, i.e.,
high enough powers of $A$. In contrast to the classical case, a
power of a nonsingular $n\times n$ matrix can be singular (and
even ghost). Asymptotics of matrix powers have been studied
extensively over the max-plus algebra, as described in
\cite[Chapter 25.4]{ABG}, but the situation is not quite the same
in the supertropical context, since ``ghost entries'' also play a
key role. Whereas \cite{IzhakianRowen2008Matrices} and
\cite{IzhakianRowen2009Equations} focused on the supertropical
determinant, nonsingular matrices and the adjoint, it turns out
that the simple cycles contributing to the first coefficient $\a
_{\mu}$ of the supertropical characteristic polynomial, together
with other simple cycles of the same average weight, provide the
explanation for the phenomena discussed in this paper. This can be
described in terms of the \textbf{supertropical trace}. Thus, this
paper involves a close study of the cycles of the graph of the
matrix, one of the main themes of \cite[Chapter~25]{ABG}.
Nevertheless, the supertropical point of view leads to somewhat
more general results, which were not previously accessible in the
language of the max-plus algebra.

 There is a reduction to the case where the graph of an $n \times n$ matrix $A$ is strongly connected, in which case
the following results hold, cf.~Theorems~\ref{singp} and
\ref{ghostpot1}:
\begin{enumerate} \ealph
    \item
When all of these leading simple cycles are tangible and disjoint,
and their vertex set contains all $n$ vertices, then every power
of the matrix $A$ is nonsingular. \pSkip

\item When (a) does not hold and there exists a leading tangible simple cycle,
disjoint from all the other leading cycles, then every power of
$A$ is non-ghost, but some power is singular. \pSkip

\item When the weight of each of the disjoint leading simple cycles is
ghost, then some power of $A$ is ghost.
\end{enumerate}

 As indicated above, the ultimate objective of this paper is to show that
 the pathological
 behavior described in
 \cite[Example~5.7]{IzhakianRowen2009Equations} can be avoided by
 passing to high enough powers of the matrix $A$. In general, there is some
 change in the behavior of powers up to a certain power $A^m$, depending on the matrix $A$, until
  the theory ``stabilizes;'' we call this $m$ the \textbf{stability index}, which  is the analog of the ``cyclicity'' in  \cite[Chapter 25]{ABG}.
 We see this behavior in the characteristic
 polynomial, for example, in Theorem~\ref{char1} and Lemma~\ref{char3}.
 The  stability index is understood in terms of the leading
 simple cycles of the graph of $A$, as explained in
 Theorem~\ref{corepower}. A key concept here is the ``tangible core'' of the digraph of a
 matrix~$A$, which is the aggregate of those  simple cycles which are tangible and disjoint from the others.

   Once stability is achieved, the supertropical theory behaves beautifully. Some
 power of $A$ can be put in full block triangular form, where each
 block $B_i$ satisfies $B_i^2 = \bt_i  B_i$ for some tangible scalar $\bt_i$
 (Corollary~\ref{semiid}) and the
 off-diagonal blocks also behave similarly, as described in Theorem~\ref{stabil}, which
 might be considered our main result.
The special case  for a matrix whose digraph is strongly connected
is a generalization of \cite{CDQV} to supertropical
  matrices. (One can specialize to the max-plus algebra and thus
rederive their theorem.)  These considerations also provide a
Jordan-type decomposition for supertropical
 matrices (Theorem~\ref{Jord}).

Passing to powers of $A$ leads us to study generalized
eigenspaces. A tangible vector $v$
 is a \textbf{generalized supertropical eigenvector} of $A$ if $A^kv$ equals $\bt^k v$ plus a
 ghost, i.e. $A^k v = \bt^k v + \ghost,$
 for some tangible $\bt $ and some $k \in \Net$. (We also include the possibility that  $A^k v $ is ghost.) Again, in contrast to the classical
 theory, the supertropical eigenvalues may change as we pass to higher powers of $A$,
 and the theory only becomes manageable  when we reach a high enough power $A^m$ of $A$.
 In this case, some ``thick'' subspace of $R^{(n)}$ is a direct sum
of generalized eigenspaces of $A^m$, cf.~Theorems~\ref{eigendec}
and ~\ref{eigendec1} (although there are counterexamples for $A$
itself). There is a competing concept of ``weak'' generalized
supertropical eigenvectors,   using ``ghost dependence,'' which we
consider briefly at the end in order to understand the action of
the powers $A^k$ for $k<m,$ as indicated in Theorem~\ref{weak2}.
But this transition is rather subtle, and merits further
investigation.

\section{Supertropical structures}
We recall various notions  from
\cite{IzhakianKnebuschRowen2009Linear,IzhakianRowen2007SuperTropical}.

A \textbf{semiring without zero}, which we notate as \semiring0,
is
 a structure $(R ,+,\cdot, \rone)$ such that $(R ,\cdot \,
,\rone)$ is a multiplicative monoid, with unit element~$\rone$,
and $(R ,+)$ is an additive commutative semigroup, satisfying
distributivity of multiplication over addition on both sides.

 We recall
that the underlying supertropical structure is a
 \textbf{\semiring0 with ghosts}, which  is a
triple $(R,\tG,\nu),$ where $R$ is a \semiring0 and $\tG
 $ is a semigroup ideal, called the
\textbf{ghost ideal}, together with an idempotent map
$$\nu : R \ \to \ \tG $$  called the \textbf{ghost map}, i.e.,
which preserves multiplication as well as addition and
 the key property \begin{equation}\label{supertr}\nu
(a) = a+a. \end{equation} Thus, $\nu(a) = \nu(a) + \nu(a)$ for all
$a  \in R $.

We write $a^{\nu }$ for $\nu(a)$, called the $\nu$-\textbf{value}
of $a$. Two elements $a$ and $b$ in $R$ are said to be
$\nu$-\textbf{equivalent}, written   $a \nucong b$, if $a^\nu =
b^\nu$.  (This was called ``$\nu$-matched'' in
\cite{IzhakianRowen2008Matrices}.) We write $a \nuge b$, and say
that $a$ \textbf{dominates} $b$, if $a^\nu \ge b^\nu$. Likewise we
say that $a$ \textbf{strictly dominates} $b$, written $a >_ \nu
b$, if $a^\nu
>  b^\nu$.

 We
define the relation $\lmodg$, called ``\textbf{ghost surpasses},''
on any semiring with ghosts $R$, by $$b \lmodg a \qquad \text{ iff
} \qquad b=a \quad \text{or} \quad b = a +
  \ghost.$$

We write $b \gsim a$  if $a+b \in \tG$, and say that $a$ and $b$
are \textbf{ghost dependent}.

A \textbf{supertropical \semiring0}  has the extra properties for
all $a,b$:
\begin{enumerate} \eroman
 \item  $a+b   =  a^{\nu } \quad \text{if}\quad  a
\nucong b$; \pSkip
 \item $a+b  \in \{a,b\}\quad \text{if}\quad  a
\nnucong b.$ 
\end{enumerate}

A \textbf{supertropical \domain0} is a supertropical \semiring0
for which   the \textbf{tangible elements} $\tT = R\setminus \tG$
 is a cancellative
monoid and the map $\nu _\tT : \tT \to \tG$ (defined as the
restriction from $\nu$ to $\tT$) is onto. In other words, every
element of $\tG$ has the form $a^\nu$ for some $a\in \tT$.

We also define a \textbf{supertropical \field0} to be a
supertropical \domain0\ $(R, \tG, \nu)$ in which every  tangible
element of $R$ is invertible; in other words, $\tT$ is a group. In
this paper we always assume that $R$ is a supertropical \field0\
which is \textbf{divisible} in the sense that $\root n \of a \in
R$ for each $a \in R.$ With care, one could avoid these
assumptions, but there is no need since a supertropical \domain0\
can be embedded into a divisible supertropical \field0, as
explained in \cite[Proposition~3.21
and~Remark~3.23]{IzhakianRowen2007SuperTropical}.

%

Although in general, the map $\nu: \tT \to \tG$ need not be 1:1,
we define a function $$\hnu : \tG \to \tT$$ such that $\nu \circ
\hnu = \id _{\tG}$, and write $\hat{ b}$ for $\hnu(b)$. Thus,
$(\hat
 b)^\nu =  b$ for all $b\in \tG$. In \cite[Proposition~1.6]{IzhakianRowen2009Equations}, it is shown that $\hnu$ can be taken to be multiplicative on the ghost elements, and we assume this implicitly throughout.

 It often is convenient to obtain a semiring by formally adjoining a zero element $\rzero$ which is considered to be
 less than all other elements of $R$. In this case, $R$ is a semiring with zero element,
$\zero_R$, (often identified in the examples with $-\infty$ as
indicated below), and the \textbf{ghost ideal} $\tGz = \tG \cup \{
\zero_R \} $ is a semiring  ideal.  We write $\tTz$ for $\tT \cup
\{ \rzero\}$. Adjoining  $\rzero$ in this way to a supertropical
\domain0 (resp.~supertropical \field0) gives us a
\textbf{supertropical domain} (resp.~\textbf{supertropical
semifield}.)

We also need the following variant of the \textbf{Frobenius
property}:

\begin{prop}\label{Frob} Suppose $ab \gsim ba$ in a \semiring0 with ghosts.
Then $(a+b)^m \lmodg a^m + b^m$ for all $m$.\end{prop}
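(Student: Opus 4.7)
The plan is to induct on $m$. The base case $m=1$ is immediate, and $m=2$ follows directly from the hypothesis: $(a+b)^2 = a^2 + b^2 + (ab+ba)$, and since $ab+ba \in \tG$, we conclude $(a+b)^2 \lmodg a^2+b^2$.

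For the inductive step, I would fully expand $(a+b)^{m+1} = \sum_{w \in \{a,b\}^{m+1}} w$ over the $2^{m+1}$ length-$(m+1)$ words, separating the pure words $a^{m+1}, b^{m+1}$ from the mixed ones and grouping the mixed words by letter-count:
\[
(a+b)^{m+1} = a^{m+1} + b^{m+1} + \sum_{k=1}^{m} S_k, \qquad S_k = \sum_{|w|_a = k} w.
\]
The task reduces to showing $S_k \in \tG$ for $1 \leq k \leq m$, whence $(a+b)^{m+1} \lmodg a^{m+1} + b^{m+1}$.

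The engine of the proof, and also the main obstacle, is the observation that whenever two words $w, w'$ differ by a single adjacent transposition (swapping an $ab$ to $ba$ at one position), one has $w + w' = u(ab+ba)v \in \tG$, using that $\tG$ is a semigroup ideal and $ab+ba \in \tG$ by hypothesis. Since the transposition graph on the $\binom{m+1}{k}$ words contributing to $S_k$ is connected, chaining these local ghost-sums together, and absorbing doubled intermediate words via $x + x = \nu(x) \in \tG$, forces the whole sum $S_k$ into $\tG$. The delicate point is verifying that the $\nu(\cdot)$ residues accumulated along the chain truly lie in $\tG$ rather than in some merely ghost-dependent position; here one leverages the supertropical framework (all words of a fixed multiset share a common $\nu$-value, by multiplicativity of $\nu$) together with the axiom $x + y = x^\nu$ when $x \nucong y$, which collapses $S_k$ cleanly into a single ghost element. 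Summing over $k$ completes the induction.
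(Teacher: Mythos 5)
Your combinatorial engine is the same as the paper's: expand $(a+b)^m$ into words in $a,b$, pair a mixed word with the word obtained by one adjacent transposition, and note that the two together contribute $u(ab+ba)v\in\tG$ because $\tG$ is a semigroup ideal, with repeated words dismissed via Equation \eqref{supertr}. (Incidentally, your induction is vestigial: the inductive hypothesis is never used.) The genuine gap is in how you discharge what you call the delicate point. The proposition is stated for an arbitrary \semiring0\ with ghosts, which is neither commutative nor supertropical, and it is applied precisely to matrix semirings (Lemma \ref{power0}); indeed, in a commutative semiring the hypothesis $ab\gsim ba$ is vacuous, since $ab+ba=ab+ab=(ab)^\nu\in\tG$ by \eqref{supertr}, so the noncommutative case is the whole point. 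In that setting both facts you invoke fail: words with the same letter multiset need not share a $\nu$-value ($\nu$ is multiplicative, but $\nu(a)\nu(b)\nu(b)\ne\nu(b)\nu(a)\nu(b)$ when $a,b$ do not commute, e.g.\ for matrices), and the bipotence axiom ``$x+y=x^\nu$ when $x\nucong y$'' is an axiom of supertropical \semirings0, not of \semirings0\ with ghosts, and does not hold in $M_n(R)$. A telltale sign that something is off: if your collapsing step were available, each $S_k$ would be a sum of at least two $\nu$-equivalent terms and hence ghost with no use of the hypothesis $ab\gsim ba$ at all, making the proposition trivial --- which it is not.

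The repair is to stay local, as the paper does, rather than trying to collapse $S_k$ to a single ghost element: each mixed term $a^{i_1}b^{j_1}\cdots$ (resp.\ $b^{i_1}a^{j_1}\cdots$) is grouped with its transpose at the first letter change, yielding $a^{i_1-1}(ab+ba)b^{j_1-1}\cdots$ (resp.\ $b^{i_1-1}(ab+ba)a^{j_1-1}\cdots$), which lies in $\tG$ by the hypothesis and the ideal property; summing these pairs accounts for every mixed term, and the only cost is that some words appear twice, which by \eqref{supertr} merely turns them into ghosts and so does not disturb the conclusion $(a+b)^m\lmodg a^m+b^m$. In particular, neither connectivity of the transposition graph nor a common $\nu$-value of the words in a content class is needed (nor available); what is needed is only that every mixed word admits at least one such partner. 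As written, your final step substitutes assumptions that are false in the intended generality for this argument, so the proof does not go through without that correction.
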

\begin{proof} Any term other than $a^m$ or $b^m$ in the expansion of $(a+b)^m$ has the form $a^{i_1}b^{j_1}\cdots$ or
$b^{i_1}a^{j_1}\cdots$ where $i_1 , j_1 \ge 1.$ But then we also
have the respective terms $$a^{i_1-1}bab^{j_1}\cdots \qquad \text{
or } \qquad  b^{i_1-1}aba^{j_1}\cdots \quad,$$ so summing yields
$$a^{i_1-1}(ab+ba) b^{j_1}\cdots \quad \text{ or } \quad
b^{i_1-1}(ab+ba)a^{j_1}\cdots$$ respectively, each of which by
hypothesis are in $\tG$. It follows that the sum of all of these
terms are in ~$\tG$. (We do not worry about duplication, in view
of Equation \eqref{supertr}. )\end{proof}


\begin{prop}\label{Frob1} If $q = dm$ for $d>1,$ and $a,b$ commute in a \semiring0 with ghosts,
then $$(a+b)^q \lmodg \left( a^{m }+ b^{m}\right)^{d-1} \bigg(\sum
_{j = 0}^m a^ {j} b^{m-j}\bigg),$$ with both sides
$\nu$-equivalent. \end{prop}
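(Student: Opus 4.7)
The plan is to expand both sides of the claimed $\lmodg$ inequality as linear combinations of the monomials $a^i b^{dm-i}$, $0 \leq i \leq dm$, using the commutativity of $a$ and $b$ to invoke the binomial theorem, and then to compare the two expansions exponent by exponent. The guiding observation is that in any \semiring0 with ghosts the identity $x + x = x^\nu$ forces every monomial appearing with integer multiplicity $\geq 2$ to contribute only its ghost to the sum, so the question reduces to careful bookkeeping of multiplicities.

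First I would write
\[
(a+b)^{dm} \; = \; \sum_{i=0}^{dm}\binom{dm}{i}\, a^i b^{dm-i}.
\]
The two extremal coefficients $\binom{dm}{0}=\binom{dm}{dm}=1$ are tangible, while every intermediate $\binom{dm}{i}$ is at least $2$, so the LHS is equal to $a^{dm}+b^{dm}$ plus a ghost supported on all intermediate exponents, of $\nu$-value $\sum_{i=1}^{dm-1}(a^i b^{dm-i})^\nu$.

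Next I would expand the RHS by first writing $(a^m+b^m)^{d-1} = \sum_{k=0}^{d-1}\binom{d-1}{k}a^{km}b^{(d-1-k)m}$, and then multiplying by $\sum_{j=0}^{m}a^j b^{m-j}$ to obtain the double sum $\sum_{k,j}\binom{d-1}{k}\,a^{km+j}b^{(d-k)m-j}$. As $(k,j)$ ranges over $\{0,\dots,d-1\}\times\{0,\dots,m\}$, the exponent $i=km+j$ hits every value in $\{0,\dots,dm\}$, so the $\nu$-supports of the two sides agree; this already settles the claim of $\nu$-equivalence.

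For the $\lmodg$ part, I would verify coefficients index by index. The extremes $i=0$ and $i=dm$ are produced exactly once on each side, with matching tangible values $b^{dm}$ and $a^{dm}$. For an intermediate $i$ that is a multiple of $m$, two parameter pairs $(k,0)$ and $(k-1,m)$ contribute to the RHS, with combined coefficient $\binom{d-1}{k}+\binom{d-1}{k-1}=\binom{d}{k}\geq 2$, so the RHS entry at $i$ is itself a ghost of the same $\nu$-value as the LHS entry. For the remaining $i$, written uniquely as $km+j$ with $1\leq j \leq m-1$, the RHS coefficient is $\binom{d-1}{k}$, which equals $1$ when $k\in\{0,d-1\}$ and may therefore leave a tangible monomial, but the corresponding LHS entry is the ghost of the same $\nu$-value, so the discrepancy is a single ghost summand. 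Assembling these per-index comparisons, LHS and RHS coincide except for a sum of ghost terms, which is precisely LHS $\lmodg$ RHS. I expect the only delicate point to be the bookkeeping around the boundary values $k=0$ and $k=d-1$, where the RHS can still carry tangible intermediate monomials; once this is organized, no machinery beyond the Pascal identity $\binom{d-1}{k}+\binom{d-1}{k-1}=\binom{d}{k}$ and the fact that $\tG$ absorbs addition is required.
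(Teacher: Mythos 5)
Your argument is correct and is essentially an expanded version of the paper's own one-line proof: the paper likewise observes that both sides are $\nu$-equivalent to $a^q+b^q+\big(\sum_j a^jb^{q-j}\big)^\nu$ and that the left side carries more ghost terms, which is exactly your monomial-by-monomial comparison of the two expansions. Your extra bookkeeping (the boundary blocks $k\in\{0,d-1\}$ and the repeated monomials at exponents divisible by $m$) just makes explicit what the paper leaves implicit, so there is no substantive difference in method.
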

\begin{proof} Both sides are $\nu$-equivalent to $a^q + b^q + (\sum_j a^j b^{q-j})^\nu,$
and the left side has more ghost terms.\end{proof}

We usually use the algebraic semiring notation (in which $\rzero,
\rone$ denote the respective additive and multiplicative
identities of $R$), but for examples occasionally use
``logarithmic notation,'' in which  $\rone$ is~$0$  and $\rzero$
is $-\infty$. (Our main example is the extended tropical semiring
in which $\tT = \tG = \Real$,  cf.~\cite{zur05TropicalAlgebra}.)

\section{Supertropical matrices}

\subsection{Background on matrices}

For any \semiring0 $R$, we write $M_n(R)$ for the set of $n \times
n$ matrices with entries in ~$R$, endowed with the usual matrix
addition and multiplication. The size of the matrices is always
denoted $n$ throughout this paper. When $R$ is a \semiring0 with
ghosts, we have the  ghost map $$\nu_*: M_n(R) \to M_{n}(\tGz)$$
 obtained by applying $\nu$ to each matrix entry.

 When $\rzero
\in R,$ we write $(\rzero)$ for the zero matrix of $M_n(R)$.
Technically speaking, we need $R$ to have a zero element in order
to define the identity matrix  and obtain a multiplicative unit in
 $M_n(R)$, which is itself a semiring. However, in
\cite{IzhakianRowen2008Matrices} we defined a
\textbf{\quasi-identity} matrix $\um_\tG$ to be a \regular \
multiplicatively idempotent matrix with $\rone$ on the diagonal
and ghosts off the diagonal, and saw that quasi-identity matrices
play a more important role in the supertropical theory than
identity matrices.

The reader should be aware that the formulations of the results
become more complicated when we have to deal with $\rzero$, which
often has to be handled separately. (See, for example, the use of
\cite[Proposition ~6.2]{IzhakianRowen2008Matrices} in proving
\cite[Theorem~6.5]{IzhakianRowen2008Matrices}.) The use of
$\rzero$ leads us to consider strongly connected components in
\S\ref{graph}, and reducible matrices in \S\ref{ssec:bform}.

Let us illustrate the Frobenius property (Proposition \ref{Frob})
for matrices.

\begin{example} Suppose $b =  a^2$ and let
$$ A = \(  \begin{matrix} \zero & a \\
a & \zero
\end{matrix}\), \quad B= \(  \begin{matrix} a & \zero \\
\zero & a
\end{matrix}\). $$
Then
$$ A^2 = B^2 = \(  \begin{matrix} a^2 & \zero \\
\zero & a^2
\end{matrix}\)= \(  \begin{matrix} b & \zero \\
\zero & b
\end{matrix}\), \quad A^2 + B^2 = \(  \begin{matrix} b^\nu & \zero \\
\zero & b^\nu
\end{matrix}\), \quad   (A + B)^2 =  \(  \begin{matrix} a & a \\
a & a
\end{matrix}\) ^2 = \(  \begin{matrix} b^\nu & b^\nu \\
b^\nu & b^\nu
\end{matrix}\).$$

\end{example}

 We define the supertropical determinant $\Det{A}$ to be the
permanent, i.e.,
$$ |A| = \sum_{\sig \in S_n} a_{1, \sig(1)} \cdots a_{n, \sig(n)},$$
as in \cite{zur05TropicalAlgebra},
\cite{IzhakianRowen2009TropicalRank}, and
\cite{IzhakianRowen2008Matrices}. Then
\begin{equation}\label{matmul} \Det{AB} \lmodg
\Det{A}\Det{B},\end{equation} by \cite[Theorem
3.5]{IzhakianRowen2008Matrices}; a quick proof was found by
\cite{AGG}, using their metatheorem which is used to obtain other
ghost-surpassing identities, as quoted in
\cite[Theorem~2.4]{IzhakianRowen2009Equations}.

We say that the matrix $A$ is \textbf{nonsingular} if $| A |$ is
tangible (and thus invertible when $R$ is a supertropical
semifield \cite{IzhakianRowen2008Matrices}); otherwise,  $| A |
\in \tGz$ (i.e., $|A| \lmodg \fzero$) and we say that $A$ is~
\textbf{singular}. Thus, Equation \eqref{matmul} says that
$\Det{AB} = \Det{A}\Det{B}$ when $AB$ is nonsingular, but there
might be a discrepancy when $\Det {AB}\in \tGz.$

One might hope that the ``ghost error'' in Formula \eqref{matmul}
might be bounded, say in terms of $\Det {AB}.$ But we have the
following easy counterexample.
\begin{example}\label{power1} Let $$A = \(  \begin{matrix} a & \rone \\
\rone & \rzero
\end{matrix}\),  \quad a >_\nu \rone. \qquad    \text{Then } \
A^2 =  \left( \begin{matrix} a^2  & a \\
a & \rone\end{matrix}\right),$$  whose determinant is $(a^2)^\nu$
whereas $|A| =\rone$ (the multiplicative unit). Thus we have no
bound for  $\frac{|A^2|}{|A|^2} = (a^2)^\nu$, although $|A^2|
\lmodg |A|^2$.
\end{example}

We also need the following basic fact.

\begin{prop}\label{rmk:quasisingular0} Any multiplicatively
idempotent, nonsingular matrix $A = (a_{i,j})$ over a
supertropical domain is already a quasi-identity matrix.\end{prop}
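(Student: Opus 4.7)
The plan is first to pin down $|A|$ and then to read off the entries of $A$ from $A^2 = A$. Applying Equation~\eqref{matmul} to $A \cdot A$ gives $|A^2| \lmodg |A|^2$, and combined with $A^2 = A$ this becomes $|A| \lmodg |A|^2$. Since $|A|$ is tangible, this will force $|A| = |A|^2$ (the only way a tangible element can ghost-surpass another), and cancellation in the group of tangibles of the supertropical semifield will give $|A| = \rone$.

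Next I plan to show that the dominating permutation in the expansion $|A| = \sum_\sigma \prod_i a_{i,\sigma(i)}$ is the identity, via a cycle-exchange argument. For any simple cycle $c = (i_1 \cdots i_k)$ in the digraph of $A$, the weight $w_c = a_{i_1 i_2} \cdots a_{i_k i_1}$ appears as a summand of $(A^k)_{i_1 i_1} = a_{i_1 i_1}$, so $w_c \nule a_{i_1 i_1}$; specializing to $k = 1$ gives $a_{ii}^\nu \le \rone^\nu$, whence $w_c^\nu \le \rone^\nu$ for every simple cycle. Tangibility of $|A| = \rone$ produces a unique dominating permutation $\sigma^*$ whose product factorises into cycle weights whose nu-values multiply to $\rone^\nu$; since each factor is already $\le \rone^\nu$, every cycle of $\sigma^*$ must have weight of nu-value exactly $\rone^\nu$, forcing $a_{ii}^\nu = \rone^\nu$ for every $i$. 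If $\sigma^*$ had a non-trivial cycle $c$, I would replace it by its $k$ fixed points to produce a distinct permutation $\tau$ still of nu-value $\rone^\nu$; two summands of the permanent at the maximum nu-value must then sum to a ghost, forcing $|A|$ itself to be ghost and contradicting $|A|$ tangible. Hence $\sigma^* = \id$, $\prod_i a_{ii} = \rone$ tangibly, and every $a_{ii}$ is tangible of nu-value $\rone^\nu$.

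Finally I will read $A^2 = A$ entry-by-entry. The diagonal equation $(A^2)_{ii} = a_{ii}^2 + \sum_{k \ne i} a_{ik} a_{ki} = a_{ii}$ has $a_{ii}^2$ tangible of nu-value $\rone^\nu$; no other summand may also reach nu-value $\rone^\nu$ (otherwise the sum would be ghost, contradicting $a_{ii}$ tangible), so the equation collapses to $a_{ii} = a_{ii}^2$, whence $a_{ii} = \rone$. For $i \ne j$, the off-diagonal equation $(A^2)_{ij} = \sum_k a_{ik} a_{kj} = a_{ij}$ contains two copies of $a_{ij}$, from $k = i$ and $k = j$ via $a_{ii} = a_{jj} = \rone$; these combine to the ghost $a_{ij} + a_{ij} = a_{ij}^\nu$, which dominates the remaining summands (each of nu-value at most $a_{ij}^\nu$, since the whole right-hand side equals $a_{ij}$), so $a_{ij} = a_{ij}^\nu \in \tG$, completing the quasi-identity form. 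The hard part will be the cycle-exchange step, where one must carefully verify that the auxiliary permutation $\tau$ genuinely contributes a second summand at the maximum nu-value of the permanent, whatever the tangibility or ghost status of the diagonal entries along $c$.
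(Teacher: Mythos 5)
Your proof is correct, but the heart of your argument takes a genuinely different route from the paper's. Both proofs open identically ($|A|=|A^2|\lmodg |A|^2$ with $|A|$ tangible forces $|A|=\rone$) and close identically (once the diagonal is $\rone$, the $(i,j)$ entry of $A^2=A$ contains $a_{i,j}$ twice, so $a_{i,j}=a_{i,j}^\nu+\sum_{k\neq i,j}a_{i,k}a_{k,j}$ is ghost). Where you differ is the diagonal: the paper argues entrywise from $A^2=A$, squeezing $a_{i,i}^2\nucong a_{i,i}$ and noting that a ghost $a_{i,i}$ would make the whole $i$-th row of $A=A^2$ ghost and hence $A$ singular; you instead expand the permanent, use tangibility of $|A|=\rone$ to get a unique dominant permutation $\sigma^*$, bound every simple-cycle weight by a diagonal entry via $A^k=A$, and run a cycle-exchange (replacing a nontrivial cycle of $\sigma^*$ by fixed points of $\nu$-value $\rone^\nu$) to force $\sigma^*=\id$ and the diagonal tangible of $\nu$-value $\rone^\nu$. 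Your route is heavier machinery but sound, and it buys a bit more: it identifies the identity as the unique dominant permutation of $|A|$ (essentially the observation the paper invokes separately when computing the characteristic polynomial of a quasi-identity), and your use of cancellativity of $\tT$ to pass from the exact equality $a_{i,i}=a_{i,i}^2$ to $a_{i,i}=\rone$ is arguably cleaner than the paper's $\nu$-equivalence step. One cosmetic slip to repair: the bound $a_{i,i}\le_\nu\rone$ does not follow from ``specializing to $k=1$'' (for $k=1$ your cycle inequality is the tautology $a_{i,i}\le_\nu a_{i,i}$); it follows from the $(i,i)$ entry of $A^2=A$, where the summand $a_{i,i}^2\le_\nu a_{i,i}$ yields $a_{i,i}\le_\nu\rone$ since tangible elements are invertible (trivially if $a_{i,i}=\rzero$). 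With that fix, the cycle-exchange step you flagged as the hard part does go through regardless of the tangibility or ghostness of the entries along the cycle: each replaced diagonal entry and the off-cycle factor of $\sigma^*$ are all $\nu$-equivalent to $\rone$, so the auxiliary permutation $\tau\neq\sigma^*$ attains the maximal $\nu$-value and the permanent would be ghost, contradicting $|A|=\rone$.
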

\begin{proof} $ |A^2| = |A|$ is tangible,
implying $ |A^2| = |A|^2,$ by Equation \eqref{matmul}, and thus $
|A|=\rone$. The $(i,j)$ entry of $ A^2$ is $a _{i,j} = \sum_k
a_{i,k} a_{k,j} = a_{i,i} a_{i,j} + a_{i,j} a_{j,j} + \sum_{k \neq
i,j} a_{i,k} a_{k,j}$. Thus, for  $i=j$ we have $a _{i,i} \le _\nu
a_{i,i}^2$.

On the other hand, by impotency of $A$,  $a_{i,i}a_{i,j} \le_\nu
a_{i,j}$, implying each $a_{i,i}$ is tangible (since otherwise
$a_{i,j}\in \tGz$ for each $j,$ implying $A$ is singular, contrary
to hypothesis).

Also, taking $i=j$ yields $a_{i,i}^2 \le_\nu a_{i,i}$, implying $a
_{i,i} \nucong a_{i,i}^2,$
 and since $a _{i,i} \ne \rzero$ is tangible, we must have each $a
_{i,i} = \rone.$ But then for $i \ne j$ we now have $a _{i,j} =
a_{i,j}^\nu + \sum_{k \neq i,j} a_{i,k} a_{k,j},$ and thus
$a_{i,j}$ is a ghost.
\end{proof}

\begin{rem}\label{rmk:quasisingular}
It easy to verify that changing one (or more) of the diagonal
entries of a quasi-identity matrix $I_\tG$ to be $\rone^\nu$, we
get a singular  idempotent matrix $J_\tG$ with $J_\tG \lmod
I_\tG$.
\end{rem}


\subsection{The weighted digraph}\label{graph}
As described in \cite{ABG}, one major computational tool in
tropical matrix theory is the \textbf{weighted digraph} $\grph _A
= (\tV, \mathcal E)$ of an $n\times n$ matrix $A = (a_{i,j})$,
which is defined to have vertex set $\tV =\{ 1, \dots, n\}$ and an
edge $(i,j)$ from $i$ to $j$  (of \textbf{weight} $a_{i,j}$)
whenever $a_{i,j} \ne \rzero$. We write $\# (\tV)$ for the number
of elements in the vertex set $\tV$.

 As usual, a \textbf{path} $p$ (called ``walk'' in \cite{ABG})
 of  \textbf{length} $\ell = \len{p}$ in a graph
 is a sequence  of
 $\ell$ edges $(i_1, i_2), (i_2, i_3),  \dots,  (i_\ell,
 i_{\ell+1}$), which can
 also be viewed as a sequence of the vertices $(i_1, \dots, i_{\ell
 +1})$.
For example,
 the path $(1,2,4)$ starts at vertex 1, and then proceeds to 2, and finally 4.
 A \textbf{cycle} is a path with the same initial and terminal
 vertex. Thus, $(1,2,4,3,2,5,1)$ is a cycle.

 We say that vertices $i,j \in \mathcal
V(\grph _A)$ are \textbf{connected} if there is
 a path from $i$ to $j$; the vertices $i$ and $j$  are \textbf{strongly
 connected} if there is a
 cycle containing both $i$ and $j$; in other words, there is a path from  $i$ to $j$ and a   path from  $j$ to $i$.
The \textbf{strongly connected component} of a vertex $i$ is the
set of vertices strongly connected to $i$.

 The matrices $A$ that are easiest to deal with are those for
 which the entire graph
$\grph _A$ is strongly connected, i.e., any two vertices are
contained in a cycle. Such matrices are called
\textbf{irreducible}, cf.~\cite{ABG}.

Reducible  matrices   are  an ``exceptional'' case which we could
avoid when taking matrices over supertropical \domains0, i.e.,
domain without zero. Nevertheless, in order to present our results
as completely as we can, we assume from now on that we are taking
matrices over a supertropical domain $R$ (with $\rzero$).

 Compressing each strongly connected component to a vertex, one
obtains the induced \textbf{component digraph} $\igrph_A$ of
$\grph_A$, and thus of $A$,  which is an acyclic digraph. The
number of vertices of $\igrph_A$ equals the number of strongly
connected components of $\grph_A$. Note that the graph $\igrph_A$
is connected iff $\grph_A$ is connected.

 A \textbf{simple cycle}, written as \textbf{\scycle}, is a cycle
having in-degree and out-degree $1$ in each of its vertices
\cite[\S 3.2]{IzhakianRowen2008Matrices}.  For example, the cycle
(1,3,1) is simple, i.e.,  whereas the cycle (1,3,5,3,1) is not
simple.

 A
$k$-\textbf{multicycle} of $\grph _A$ is a disjoint union of
\scycle s the sum of whose lengths is $k$.
 The \textbf{weight} of a
path $p$, written $\weight{p}$,  is the product of the weights of
its edges (where we use the semiring operations); the
\textbf{average weight} of a path $p$ is
$\sqrt[\len{p}]{\weight{p}}$. A path $p$ is called
\textbf{tangible} if $w(p)\in \tT;$ otherwise, $p$ is called
\textbf{ghost}.

Given a subgraph $\grph'$ of $\grph_A$, we write $\tV(\grph')$ for
the set of vertices of $\grph'$. Given a \scycle \ $\cyc$ passing
through vertices $i,j$, we write $\cyc(i,j)$ for the subpath of
$\cyc$ from $i$ to $j$. Thus, the cycle $\cyc$ itself can be
viewed as the subgraph $\cyc(i,i)$ for any vertex $i \in
\tV(\cyc)$.

By \textbf{deleting} a \scycle\ $C = \cyc(i,i)$ from a path $p$,
we mean replacing $\cyc$ by the vertex $i$. For example, deleting
the cycle $(3,6,3)$ from the path $(1,2,3,6,3,5)$ yields the path
$(1,2,3,5)$. Similarly, \textbf{inserting} a \scycle\ $C =
\cyc(i,i)$ into  $p$ means replacing the vertex $i$ by the cycle
$\cyc$.

\subsection{Block triangular form}\label{ssec:bform}

The \textbf{submatrix} (of $A$) corresponding to  a subset   $\{
i_1, \dots, i_k\}$ of  vertices of the graph $\grph _A$  is
defined to be the $k \times k$ submatrix of $A$ obtained by taking
the $i_1, \dots, i_k$ rows and columns of $A$.

We say $A$ has \textbf{full block triangular form } if   it is
written as
\begin{equation}\label{block1} A = \(  \begin{matrix}  B_1 & B_{1,2} & \dots & B_{1,\tt -1} & B_{1,\tt } \\
 \mzero &  B_2 & \dots & B_{2,\tt -1} & B_{2,\tt }\\ \vdots & \vdots & \ddots  & \vdots
  & \vdots \\ \mzero  &   \dots &  \mzero & B_{\tt -1}
  &  B_{\tt -1,\tt }\\  \mzero&  \dots  &  \mzero  &  \mzero & B_\tt
\end{matrix}\),\end{equation}
 where each diagonal block $B_i$ is an irreducible $n_i \times n_i$
 matrix, $i = 1, \dots, \tt$,
 and each $B_{i,j}$, $j >i$,  is an $n_i \times n_j$ matrix. (Here  we write $\mzero$ for
 the submatrices
 ~$(\rzero)$ in the appropriate positions.) Thus, in this case,  the component graph
 $\igrph _A$
 of $A$ is acyclic and has $\tt$ vertices.

\begin{prop}\label{fullbl} A matrix $A$ is reducible iff it can be put into the following
form (renumbering the indices if necessary):
\begin{equation}\label{block2} A = \(  \begin{matrix}  B_1 & C \\
 (\zero) &  B_2
\end{matrix}\),\end{equation}
 where $n = k + \ell$, $B_1$ is a $k\times k$
matrix, $C$ is a $k\times \ell$ matrix, $B_2$ is an $\ell\times
\ell$ matrix, and $(\zero)$ denotes the zero $\ell\times k$
matrix.

 More generally, any matrix
$A$   can be put into full block triangular form as in
\eqref{block1} (renumbering the indices if necessary), where the
diagonal blocks $B_i$ correspond to the strongly connected
components of $A$.  In this case, $$|A| = |B_1|\cdots|B_\tt|.$$ In
particular, $A$ is nonsingular iff each $B_i$ is nonsingular.
\end{prop}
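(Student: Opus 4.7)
My plan is to reduce everything to properties of the weighted digraph $\grph_A$. For the first claim, I would observe that a matrix is reducible precisely when $\grph_A$ fails to be strongly connected. If $\grph_A$ is not strongly connected, choose any source component $W$ of the acyclic component digraph $\igrph_A$, i.e., a strongly connected component with no incoming edges from outside. Renumbering the indices so that the vertices of $W$ come first, every entry $a_{i,j}$ with $i \notin W$ and $j \in W$ equals $\rzero$, yielding exactly the form \eqref{block2} with $B_1$ the submatrix on $W$. Conversely, if $A$ has the form \eqref{block2}, then $\grph_A$ has no edges from $\{k+1,\dots,n\}$ into $\{1,\dots,k\}$, so no vertex in the first block can be strongly connected to any vertex in the second, making $A$ reducible.

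For the full block triangular form, I would iterate this procedure. The component digraph $\igrph_A$ is acyclic, so pick any topological ordering of its $\tt$ vertices (the strongly connected components of $\grph_A$) and list the vertices of $\grph_A$ block by block in this order. Every edge of $\grph_A$ either stays inside a single component or passes from an earlier component to a later one, which yields the form \eqref{block1} with $B_i$ the submatrix on the $i$-th component; these are irreducible by construction.

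For the determinantal identity, I would expand $|A| = \sum_{\sigma \in S_n} a_{1,\sigma(1)} \cdots a_{n,\sigma(n)}$. A term is nonzero only when every $(i,\sigma(i))$ is an edge of $\grph_A$. Under the block triangular ordering, edges never go from a later block to an earlier one, so $\sigma$ cannot send any index of the bottom block $B_\tt$ outside $B_\tt$; bijectivity then forces $\sigma|_{B_\tt}$ to be a permutation of the index set of $B_\tt$, and, inductively, $\sigma$ decomposes as a product of permutations $\sigma_i$, one inside each block. The contribution of $\sigma$ factors as the product, over $i$, of the contribution of $\sigma_i$ to $|B_i|$, so distributivity of multiplication over addition gives $|A| = |B_1|\cdots|B_\tt|$.

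The nonsingularity criterion is then immediate from the supertropical structure: since $\tT$ is a multiplicative monoid while $\tGz$ is a semiring ideal, the product $|B_1|\cdots|B_\tt|$ lies in $\tT$ iff each factor does. The only mild care needed is to set up the block-triangular enumeration so that the per-block factorization of permutations is transparent; once that is in place, the rest is bookkeeping, and no appeal to the more delicate supertropical inequalities (such as \eqref{matmul}) is required.
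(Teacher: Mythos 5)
Your proof is correct and follows essentially the same route as the paper: both arguments are purely graph-theoretic, splitting off the vertices that cannot be reached from the rest (the paper via the reachability set of a vertex $i$ with no path from $j$ to $i$, iterated; you via a source component and a topological order of the component digraph $\igrph_A$), and both reduce the determinantal statement to the observation that a permutation contributing a nonzero term to the permanent must preserve the blocks. The only difference is one of presentation: you make explicit the factorization $|A|=|B_1|\cdots|B_\tt|$ and the tangibility argument via the ideal property of $\tGz$, which the paper leaves implicit.
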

\begin{proof} Obviously any matrix in the form of \eqref{block2} is reducible. Conversely, suppose that $A$ is
reducible. Take indices $i,j$ with no path from $j$ to $i$. Let
$\mathcal I \subset \{ 1, \dots, n\}$ denote the set of indices of
$\grph_A$ having a path terminating at $i$, and $\mathcal J = \{
1, \dots, n\} \setminus \mathcal I.$ Renumbering indices, we may
assume that $\mathcal I = \{ 1, \dots, \ell\}$ and $\mathcal J =
\{\ell+ 1, \dots, n\}$ for some $1 \le \ell <n$. $A$ is in the
form of \eqref{block2} with respect to this renumbering, and
iterating this procedure puts $A$ in full block triangular form.
\end{proof}

\begin{rem}\label{Jordan0} If $ A$ is in full block triangular form as in \eqref{block1},
 then
 \begin{equation}\label{block3} A^m = \(  \begin{matrix}  B_1^m & ?? & ?? & \dots & ?? \\
 \mzero &  B_2^m & ?? & \dots & ?? \\ \vdots & \vdots & \ddots  & \vdots
  & \vdots \\ \mzero  &   \dots &  \mzero & B_{\tt-1}^m
  & ??\\  \mzero&  \dots  &  \mzero  &  \mzero & B_\tt^m
\end{matrix}\).\end{equation}
 It
follows that $|A^m| = |B_1^m|\cdots|B_\tt^m|$, for any $m \in
\Net$.
\end{rem}

\subsection{Polynomials evaluated on matrices}

Recall that in the supertropical theory we view polynomials as
functions, and polynomials are identified when they define the
same function. Suppose a polynomial $f = \sum_i \a _i \lm ^i$ is a
sum of monomials $\al_i \lm^i$. Let $g = \sum _{i \ne j}\a _i \lm
^i $. The monomial $\a_j \lm ^j$  is \textbf{inessential} in $f$,
iff $f(a) = g(a)$  for every $a\in R$. An inessential monomial $h$
of $f$ is \textbf{quasi-essential} if $f(a) \nucong h(a)$ for some
point $a \in R$. The \textbf{essential part} $\ef$ of a polynomial
 $f = \sum \a _i \lm^i$ is the sum of those monomials
$\a_j \lm ^j$ that are essential.

A polynomial $f \in R[\la]$ is called \textbf{primary} if it has a
unique corner
root(cf.~\cite[Lemma~5.10]{IzhakianRowen2007SuperTropical}), up to
$\nu$-equivalence.

\begin{lem}\label{gen1} If $f \in R[\la]$ is  primary, then $f \nucong \a \sum a^i \la^ {d-i}$, where $\a\la^d$ is
the leading monomial of $f$. If moreover $f \in R[\la]$ is monic
primary with constant term $a^d$, then $(\la + a)^d \lmodg
f.$\end{lem}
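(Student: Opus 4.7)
My plan is to derive both statements from an analysis of how the unique-corner-root condition constrains the coefficients of $f$, combined with an application of the Frobenius property (Proposition~\ref{Frob}) in the polynomial ring $R[\la]$. First I would set up notation: write $f=\sum_{i=0}^d \a_i\la^i$ with leading coefficient $\a=\a_d$, and let $a$ denote the corner root of $f$, unique up to $\nu$-equivalence.

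For the first claim, the primary hypothesis forces every coefficient to satisfy $\a_k\nule \a a^{d-k}$: a strict inequality $\a_k>_\nu \a a^{d-k}$ for some $k$ would place the Newton-polygon point $(k,\nu(\a_k))$ strictly above the edge from $(d,\nu(\a))$ to $(0,\nu(\a_0))$, producing a second corner root. I would then evaluate $f$ pointwise in three regimes: for $x>_\nu a$ the leading term dominates and $f(x)\nucong \a x^d$; for $x<_\nu a$ (assuming $\a_0\ne\rzero$, which is forced by the stated conclusion at $x=\rzero$) the constant term dominates and $f(x)\nucong \a_0\nucong \a a^d$; and at $x\nucong a$, $f(x)$ is ghost with $\nu$-value $\a a^d$. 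These regimes match pointwise the behavior of $\a\sum_{i=0}^d a^i\la^{d-i}$, whose middle monomials are pointwise dominated by either $\a\la^d$ or $\a a^d$, giving the claimed $\nu$-equivalence of functions.

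For the second claim, assume $f$ is monic primary with constant term $a^d$, so $\a=\rone$ and $\a_0=a^d$. The first claim, together with the observation that every other monomial $\a_k\la^k$ of $f$ is inessential (its $\nu$-value at any $x$ being dominated by $\max(x^d,a^d)$), shows that $f$ and $\la^d+a^d$ define the same function on $R$. I would then apply Proposition~\ref{Frob} in $R[\la]$ to the commuting elements $\la$ and $a$: commutativity forces $\la a\gsim a\la$ trivially (the sum $\la a+a\la$ equals $(\la a)^\nu\in\tG$), so Proposition~\ref{Frob} yields $(\la+a)^d\lmodg \la^d+a^d$. Since $\la^d+a^d$ and $f$ define the same function, transitivity of $\lmodg$ across function-level equalities yields $(\la+a)^d\lmodg f$.

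The main obstacle is the first claim. The Newton-polygon argument must be translated into the supertropical setting, with ghost/tangible distinctions handled carefully at the boundary $x\nucong a$. Moreover the degenerate case $\a_0=\rzero$ must be excluded, since a polynomial like $\la^d+b\la^k$ with $0<k<d$ literally satisfies ``unique corner root'' yet its function disagrees with $\a\sum a^i\la^{d-i}$ at $x=\rzero$. I would appeal to \cite[Lemma~5.10]{IzhakianRowen2007SuperTropical}, already invoked in the definition of ``primary'' just above, to rule this case out; or else make the nonzero-constant hypothesis explicit. Once the first claim is in place, the second reduces cleanly to Proposition~\ref{Frob}.
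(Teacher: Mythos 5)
Your proposal is correct and follows essentially the paper's own route: the first assertion comes from the unique-corner-root (adjacent-monomial/Newton-polygon) comparison forcing every coefficient to be $\nu$-dominated by $\a a^{d-k}$, and the second from the supertropical expansion $(\la+a)^d=\la^d+a^d+\sum_{i=1}^{d-1}(a^i)^\nu\la^{d-i}$, which your invocation of Proposition~\ref{Frob} for the commuting elements $\la$ and $a$ simply repackages. Your caveat about a vanishing constant term (e.g.\ $\la^d+b\la^k$) is a definitional point the paper leaves to its convention on corner roots from \cite{IzhakianRowen2007SuperTropical}, so flagging it is appropriate care rather than a deviation.
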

\begin{proof} The first assertion is obtained by writing $f$ as a
sum of quasi-essential monomials and observing that corner roots
 are
obtained by comparing adjacent monomials of $f$.

The second assertion follows by expanding $(\la + a)^d = \la^d +
a^d + \sum _{i=1}^{d-1} (a^i )^\nu \la^{d-i}.$
\end{proof}

We say that a matrix $A$ \textbf{satisfies} a polynomial $f \in
R[\la]$ if $f(A) \lmodg (\zero);$ i.e., $f(A)$ is a ghost matrix.
In particular, $A$ satisfies its \textbf{characteristic
polynomial}
$$ f_A := \Det{\la I+ A}= \la ^n + \sum \a_k \la^{n-k},$$ where $\a _k$ is the sum of all
$k$-multicycles in the graph $G_A$ of the matrix $A$,
cf.~\cite[Theorem~5.2]{IzhakianRowen2008Matrices}; those \multi s
of largest $\nu$-value are called the \textbf{dominant $k$-\multi
s} of the characteristic coefficients of $f_A$.

The \textbf{essential characteristic polynomial} is defined to be
the essential part ${f_A}^{\essn}$ of the characteristic
polynomial~$ f_A$, cf.~\cite[Definition
4.9]{IzhakianRowen2007SuperTropical}.  The \textbf{tangible
characteristic polynomial} $\hfA$ of $A$ is defined as $$ \hfA :=
\sum_{k=0}^n \Inu{\a_k} \la^k.$$

Writing $f_A = \la ^n + \sum _{k=1}^{n} \a _k \la ^{n-k}$, we take
\begin{equation}\label{eq:LA} \text{$L(A) : = \{ \ell \ge 1: \root
\ell \of {\a _\ell} \ge_\nu \root k \of {\a _k}$ for each $k \le n
\}$.}
\end{equation}
(There may be several such indices.) In other words, $\ell \in L$
if some $\ell$-multicycle of $A$ has dominating average weight,
either tangible or ghost weight.
\begin{defn}\label{mu01}
We define
\begin{equation}\label{eq:muA}
\mu(A) := \min \{  \ell \ | \ \ell \in L(A)\},
\end{equation}  and call $\a _{\mu}$ the
\textbf{leading characteristic coefficient} of $A$, which we say
 is of \textbf{degree} $\mu(A)$. We denote $\mu (A)$ as $\mu$
if $A$ is understood. We define the \textbf{leading (tangible)
average weight} $\om  := \om  (A)$  to be $$ \om ( A):= \Inu{\root
{\mu} \of {\a_\mu}}.$$
\end{defn}

When $A$ is tangible, $\root {\mu} \of {\a_\mu}$ is the ``maximal
cycle mean'' $\rho_{\max} (A)$ in the sense of \cite{ABG}.

\begin{rem}\label{Jordan00} If $ A$ is in full block triangular form as in \eqref{block1},
 then, by Remark \ref{Jordan0}
the characteristic polynomial of $A^m$ is the product of the
characteristic polynomials of  the $B_i^m$, so many properties of
$A^m$ can be obtained from those of the $B_i^m$.
\end{rem}

We define  the \textbf{(supertropical) trace}
  $\trace{A}$ of the matrix $A = (a_{i,j})$
  to be $$ \trace{A} := \sum_{i=1}^n a_{i,i}.$$

  \begin{rem} Note  that $\a _1 = \trace{A}.$
If   $\a _1 \la^{n-1}$ is an essential or a   quasi-essential
monomial of
  $f_A$, then $\trace {A}$  is the leading characteristic
coefficient  of $A$, and $\mu  = 1$. Furthermore, taking $\bt \in
\tT$ $\nu$-equivalent to $\a_1,$ we know that $\bt$ dominates all
the other supertropical eigenvalues of $A$, and thus in view of
\cite[Theorem~7.10]{IzhakianRowen2008Matrices} is the eigenvalue
of highest weight.
\end{rem}

\begin{example}  The characteristic polynomial of any $n \times n$
quasi-identity matrix $I_ \tG$ is $$\la^n + \sum _{i =1}^{n-1}
\rone^\nu \la^{n-i} + \rone,$$ since any \scycle\ contributing a
larger characteristic coefficient could be completed with $\rone$
along the diagonal to a dominating ghost contribution to $|I_
\tG|,$ contrary to the fact that $|I_ \tG| = \rone.$ (This
argument is implicit in
\cite[Remark~4.2]{IzhakianRowen2008Matrices}.) Hence, $\mu = 1 ,$
and the leading characteristic coefficient of $I_ \tG$ is $\trace
{I_ \tG}$, which is
 $\rone ^\nu$. \end{example}


\begin{lem}\label{power0} If  $f = \sum
\a_i \la^i,$ then $f(A)^m  \lmodg  g(A^m),$ where $g = \sum \a_i^m
\la^i.$
\end{lem}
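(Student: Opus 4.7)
The plan is to reduce the lemma to a ``many-summand'' version of the Frobenius property (\propref{Frob}) by a simple bookkeeping step, and then to deduce that version by induction from the two-summand case already proved.

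First I would rewrite the right-hand side to expose its structure as a sum of pure $m$-th powers. Since each $\a_i \in R$ commutes with every power of $A$, one has $\a_i^m A^{im} = (\a_i A^i)^m$, so setting $T_i := \a_i A^i$ we have
\[
f(A) \; = \; \sum_i T_i, \qquad g(A^m) \; = \; \sum_i \a_i^m A^{im} \; = \; \sum_i T_i^m .
\]
The desired conclusion $f(A)^m \lmodg g(A^m)$ thus becomes
\[
\bigl(\sum_i T_i\bigr)^m \; \lmodg \; \sum_i T_i^m ,
\]
and the crucial feature to exploit is that the matrices $T_i = \a_i A^i$ pairwise commute, being polynomials in the same matrix $A$.

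Next I would establish the multi-summand Frobenius inequality: for pairwise commuting elements $T_0,\dots,T_k$ in any \semiring0\ with ghosts,
\[
\bigl(T_0+T_1+\cdots+T_k\bigr)^m \; \lmodg \; T_0^m+T_1^m+\cdots+T_k^m,
\]
by induction on $k$. The base case $k=1$ is exactly \propref{Frob}, whose hypothesis $T_0T_1 \gsim T_1T_0$ holds trivially here, since $T_0T_1 = T_1T_0$ implies $T_0T_1 + T_1T_0 = (T_0T_1)^\nu \in \tG$ by \eqref{supertr}. For the inductive step, set $S = T_0+\cdots+T_{k-1}$. Then $S$ commutes with $T_k$, so $S T_k \gsim T_k S$ and \propref{Frob} gives $(S+T_k)^m \lmodg S^m + T_k^m$. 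By induction, $S^m \lmodg \sum_{i=0}^{k-1} T_i^m$, so combining these using the (routine) facts that $\lmodg$ is transitive and compatible with addition (both immediate from the definition $b \lmodg a \iff b = a$ or $b = a + \ghost$) yields the desired bound.

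Applying this to $T_i = \a_i A^i$ finishes the proof. The only ``obstacle'' is the induction extending \propref{Frob} from two to many summands; this is entirely routine once one notices that commutativity is preserved under forming partial sums, so the required commutation hypothesis for the inductive step is automatic.
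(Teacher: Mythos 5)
Your proof is correct and follows essentially the same route as the paper, which simply applies Proposition~\ref{Frob} to the commuting terms $\a_i A^i$ to get $\big(\sum_i \a_i A^i\big)^m \lmodg \sum_i \a_i^m A^{im}$. The only difference is that you spell out explicitly (via induction and the transitivity/additivity of $\lmodg$) the many-summand extension of Proposition~\ref{Frob} that the paper's one-line proof leaves implicit.
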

\begin{proof} By Proposition \ref{Frob},  $$f(A)^m = \bigg (\sum \a_i
A^i \bigg)^m \lmodg \sum \a_i^m A^{im} = g(A^m),$$ again by
Proposition \ref{Frob}.
\end{proof}

\begin{lem}\label{char3} If $A$ is as in Remark~\ref{Jordan0}, then $f_{A^m} =
f_{B_1^m}\cdots f_{B_\tt^m}.$
\end{lem}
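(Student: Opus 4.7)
The plan is to reduce the statement to the multiplicativity of the supertropical determinant on block triangular matrices, which has already been recorded in \propref{fullbl}. The key observation is that \emph{block triangular form is preserved by both taking powers and adding a scalar multiple of the identity}, so the matrix $\la I + A^m$ inherits the structure we need.

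First I would invoke \remref{Jordan0} to assert that $A^m$ is again in full block triangular form, with diagonal blocks $B_1^m, \dots, B_\tt^m$ (and some uncontrolled off-diagonal blocks, indicated by $??$ in \eqref{block3}). Adding $\la I$, where $I$ is the identity matrix partitioned conformally with the block structure of $A^m$, yields
\begin{equation*}
\la I + A^m \ = \ \(\begin{matrix}
\la I_{n_1} + B_1^m & ?? & \cdots & ?? \\
\mzero & \la I_{n_2} + B_2^m & \cdots & ?? \\
\vdots & & \ddots & \vdots \\
\mzero & \cdots & \mzero & \la I_{n_\tt} + B_\tt^m
\end{matrix}\),
\end{equation*}
which is still in full block triangular form, now with diagonal blocks $\la I_{n_i} + B_i^m$.

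Next I would apply the determinant formula from \propref{fullbl}, which states that for a matrix in full block triangular form, the supertropical determinant equals the product of the determinants of the diagonal blocks. Applying this to $\la I + A^m$ (viewed as a matrix over the polynomial \semiring0\ $R[\la]$, or equivalently as a polynomial identity obtained by evaluation at each $\la \in R$) gives
\begin{equation*}
f_{A^m} \ = \ |\la I + A^m| \ = \ \prod_{i=1}^\tt |\la I_{n_i} + B_i^m| \ = \ \prod_{i=1}^\tt f_{B_i^m},
\end{equation*}
which is the desired identity.

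The only mildly delicate point is ensuring that \propref{fullbl} applies to the matrix $\la I + A^m$ whose entries lie in $R[\la]$ rather than in $R$ itself; but since $R[\la]$ is a \semiring0 in its own right (and the zero blocks remain zero after adding $\la I$, because $\la$ is only added along the diagonal), the argument of \propref{fullbl} works verbatim. Alternatively, one can verify the identity pointwise by evaluating at each $a \in R$, which reduces to the determinant identity for block triangular numerical matrices. I do not anticipate any other obstacle, since once the block triangular form of $\la I + A^m$ is in hand, the statement is a direct application of results already established.
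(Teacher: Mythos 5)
Your proposal is correct and follows essentially the same route as the paper: the paper's own (very terse) proof likewise observes that $f_{A^m} = |\la I + A^m|$ and that this determinant is the product of the determinants of the diagonal blocks $\la I + B_j^m$ of the block triangular matrix $\la I + A^m$, exactly as in your application of Proposition~\ref{fullbl} to Remark~\ref{Jordan0}. Your extra remarks about working over $R[\la]$ or evaluating pointwise merely make explicit a point the paper leaves implicit.
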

\begin{proof} $f_{A^m} = |\la I + A^m|,$ which is the product of
the determinants of the diagonal blocks of $\lm I + A^m$, i.e.,
the $|\la I + B_j^m|$.
\end{proof}

\section{Powers of matrices}

We would like to study powers of a matrix $A \in M_n(R)$, where
$R$ is a supertropical domain, in terms of properties of its
characteristic polynomial $f_A$. Certain properties can be had
quite easily.
\begin{lem}\label{power} If $A$ satisfies the polynomial $f = \sum
\a_i \la^i,$ then $A^m$ satisfies the polynomial $\sum \a_i^m
\la^{i}.$\end{lem}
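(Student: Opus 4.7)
The plan is to propagate ghostness from $f(A)$ to $g(A^m)$ via \lemref{power0}. First I would unpack the hypothesis: ``$A$ satisfies $f$'' by definition means $f(A)\lmodg(\zero)$, i.e., $f(A)$ is a ghost matrix. Since the ghost ideal is a semigroup ideal (closed under matrix multiplication), the $m$-fold product $f(A)^m$ is again a ghost matrix, so $f(A)^m\lmodg(\zero)$.

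Next, \lemref{power0} supplies $f(A)^m\lmodg g(A^m)$ where $g(\la)=\sum\alpha_i^m\la^i$; explicitly, $f(A)^m=g(A^m)+G$ for some ghost matrix $G$ arising as the aggregate of the Frobenius cross-term contributions from \propref{Frob}. Combining this identity with the fact that $f(A)^m$ is itself a ghost matrix, one infers $g(A^m)\lmodg(\zero)$; that is, $g(A^m)$ is a ghost matrix, which is exactly the assertion that $A^m$ satisfies $g$.

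The delicate step---and the one I expect to be the main obstacle---is this final inference: extracting ghostness of $g(A^m)$ itself from the equation $g(A^m)+G=f(A)^m$ with both $f(A)^m$ and $G$ ghost. In an abstract semiring with ghosts this would not automatically follow, since in principle a tangible entry of $g(A^m)$ could be $\nu$-masked by a larger ghost entry of $G$. What saves the argument is the specific structure of the decomposition produced by the iterated Frobenius identity in the proof of \lemref{power0}: the cross-term matrix $G$ is assembled entry by entry from multinomial-coefficient copies of ``mixed'' monomials $\alpha_{i_1}\cdots\alpha_{i_m}A^{i_1+\cdots+i_m}$ (with the $i_k$ not all equal), each of which is $\nu$-dominated by some ``diagonal'' pure-power contribution $\alpha_i^mA^{im}$ already present in $g(A^m)$. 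Consequently $G$ cannot $\nu$-dominate any surviving tangible entry of $g(A^m)$, and the desired ghostness of $g(A^m)$ genuinely falls out of ghostness of the sum.
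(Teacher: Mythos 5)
Your route is the paper's route up to the key point: the paper's own proof is nothing more than an appeal to \lemref{power0}, and your steps (i) $f(A)$ ghost implies $f(A)^m$ ghost, and (ii) $f(A)^m=g(A^m)+G$ with $G$ a ghost matrix, are both fine. You are also right that the remaining inference is the delicate one. But the claim you use to close it is false: it is not true that every mixed monomial $\a_{i_1}\cdots\a_{i_m}A^{i_1+\cdots+i_m}$ is entrywise $\nu$-dominated by some pure term $\a_i^m A^{im}$. That domination is a scalar (tropical AM--GM) fact; for matrices, the entry $(A^{s})_{kl}$ for an intermediate exponent $s$ can strictly exceed every $(A^{im})_{kl}$, because the heaviest walk of length $s$ need not pass anywhere near the heaviest walks of lengths $im$. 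Concretely, take $f=\la+\la^2$ (so $\a_1=\a_2=\rone$), $m=2$, and the $5\times 5$ matrix, in logarithmic notation, with tangible entries $a_{12}=a_{23}=a_{34}=10$, $a_{15}=a_{54}=0$, $a_{55}=-1$, and all other entries $\rzero$. Then $(A^2)_{14}=0$ and $(A^4)_{14}=-2$, so $g(A^2)_{14}=0$ is tangible, while the cross term contributes $((A^3)_{14})^\nu=30^\nu$, which strictly $\nu$-dominates it. So the cross-term matrix $G$ can very well $\nu$-dominate a ``surviving tangible entry'' of $g(A^m)$, and your final sentence does not follow. (This matrix does not satisfy $f$, so it is not a counterexample to the lemma itself; but your structural claim was stated unconditionally, as a feature of the Frobenius decomposition, and it is simply wrong.)

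The underlying problem is that your argument uses the hypothesis $f(A)\lmodg(\zero)$ only through the ghostness of $f(A)^m$, and that really is too weak: ghostness of $g(A^m)+G$ with $G$ ghost never forces ghostness of $g(A^m)$ by itself, and, as the example shows, the shape of the multinomial expansion does not rescue the inference. Any correct completion has to feed the entrywise ghostness of $f(A)$ back in at a finer level --- for instance, when some $\a_j^m(A^{jm})_{kl}$ is tangible and strictly dominant in $g(A^m)_{kl}$, one must exploit that every entry $f(A)_{u,v}$ along (segments of) a dominant walk of length $jm$ is ghost, not merely that the fully multiplied-out matrix $f(A)^m$ is ghost. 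The paper itself offers no elaboration (it records the lemma as a special case of \lemref{power0}), but the justification you supply in its place is incorrect, so as written your proof has a genuine gap exactly at the step you flagged as the main obstacle.
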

\begin{proof} This is a special case of Lemma~\ref{power0}.
\end{proof}

\begin{thm}\label{char1} If the characteristic polynomial $f_A  =
\sum_{i=0}^n \a_i \la^i,$ then  $f_{A^m} \lmodg  \sum_{i=0}^n
\a_i^m \la^i,$ for any $m$.
\end{thm}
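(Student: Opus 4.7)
The plan is to prove the claim by comparing coefficients of $\lambda^i$. Writing $\alpha_i=\sum_{|I|=n-i}|A_I|$, where $A_I$ is the principal $(n-i)\times(n-i)$ submatrix of $A$ indexed by $I$, the coefficient of $\lambda^{n-k}$ in $f_{A^m}=|\lambda I+A^m|$ is $\sum_{|J|=k}|(A^m)_J|$, while that of $\sum_i\alpha_i^m\lambda^i$ is $\bigl(\sum_{|I|=k}|A_I|\bigr)^m$. Thus the theorem is equivalent to the family of ghost-surpassing identities
\[
\sum_{|J|=k}\bigl|(A^m)_J\bigr|\;\lmodg\;\Bigl(\sum_{|I|=k}|A_I|\Bigr)^m\qquad(0\le k\le n).
\]
The case $k=n$ is $|A^m|\lmodg|A|^m$, obtained by iterating Equation~\eqref{matmul}; the case $k=0$ is trivial.

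For $0<k<n$, my plan is to expand the right-hand side using Proposition~\ref{Frob} twice in the commutative semiring $R$:
\[
\Bigl(\sum_{|I|=k}|A_I|\Bigr)^m\;\lmodg\;\sum_{|I|=k}|A_I|^m\;\lmodg\;\sum_{|I|=k}\sum_{\sigma\in S_I}\prod_{i\in I}a_{i,\sigma(i)}^m.
\]
Each ghost cross-term introduced along the way is $\nu$-dominated by one of the pure powers $\prod_{i\in I}a_{i,\sigma(i)}^m$ on the far right, via the elementary supertropical estimate $b_1\cdots b_m\le_\nu\max_j b_j^m$, and so is absorbed into the pure-power sum.

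On the left-hand side of the target inequality, each such pure power is realised exactly by the \emph{periodic} $m$-walk $i\to\sigma(i)\to\sigma^2(i)\to\cdots\to\sigma^m(i)$: taking $J=I$ and $\tau=\sigma^m\in S_I$, this walk contributes $\prod_{i\in I}a_{i,\sigma(i)}^m$ to $|(A^m)_I|$, so the tangible contents of the two sides match. Every remaining contribution to $\sum_{|J|=k}|(A^m)_J|$ corresponds to a closed $m$-walk configuration of $G_A$ that exits its base set $J$; such a configuration admits a cyclic re-rooting that reproduces the same tangible weight inside $|(A^m)_{J'}|$ for a shifted base subset $J'$ of the same size. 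After summing over all $|J|=k$, the weight is recorded with multiplicity at least two, whence by the supertropical identity $x+x=x^\nu$ (Equation~\eqref{supertr}) it collapses into a ghost, yielding the desired ghost-surpassing.

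The main technical obstacle lies in this final cyclic re-rooting: formalising, uniformly across all ways the walks may exit and re-enter $J$, that every non-periodic exit configuration produces at least two valid base subsets contributing equal tangible weight.
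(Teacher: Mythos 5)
Your reading of the statement as the family of coefficientwise relations $\sum_{|J|=k}\bigl|(A^m)_J\bigr|\lmodg\bigl(\sum_{|I|=k}|A_I|\bigr)^m$ is the natural one, and the cases $k=0,n$ are handled correctly, but the proposal stops exactly at the step that carries all the content, and that step is not yet an argument. Since $b\lmodg a$ in a supertropical domain means $b=a$ or $b$ is a ghost with $b\ge_\nu a$, what you must prove is: (i) the coefficient of $f_{A^m}$ always $\nu$-dominates $\alpha_{n-k}^m$ --- the easy half, supplied by your periodic walks --- and (ii) whenever that coefficient is tangible it equals $\alpha_{n-k}^m$ on the nose. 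Part (ii) is your ``cyclic re-rooting'' claim, and as sketched it fails on the first nontrivial configuration one tries: rotating all $k$ walks by one step moves each base point one step along its walk, and if two of the walks pass through a common vertex at the same time, the rotated base points collide, so the shifted index set is no longer a $k$-element subset and produces no second summand of $\sum_{|J|=k}|(A^m)_J|$ at all. In general the duplication must be produced by varying the permutation $\tau$, the walk realizing a given entry of $A^m$, or the amount of rotation, and one must show that the only configurations admitting no duplication (and no ghost entry) are the synchronous $m$-fold repetitions of a single dominant tangible $k$-multicycle; none of this is carried out, and you yourself label it the main obstacle. (The Frobenius expansions of the right-hand side and the ``absorption'' of cross-terms are harmless but do no real work; in the supertropical setting only dominant terms survive a sum, so the whole proof lives in the dichotomy (ii).)

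This is also a far heavier route than the paper's. There, Theorem~\ref{char1} is deduced in one line by applying Lemma~\ref{power} --- itself an immediate consequence of the Frobenius property, Proposition~\ref{Frob} --- to the supertropical Cayley--Hamilton theorem \cite[Theorem~5.2]{IzhakianRowen2008Matrices}; no analysis of walks, cycles or principal submatrices is needed. So even granting your reduction to principal permanents, the decisive combinatorial dichotomy is precisely the part you leave open, and until it is proved the proposal does not constitute a proof.
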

\begin{proof} Application of Lemma~\ref{power} to  \cite[Theorem~5.2]{IzhakianRowen2008Matrices}.
\end{proof}


Idempotent matrices need not have diagonal entries $
\nu$-equivalent to $ \rone$; for example the matrix
$$ A =\( \begin{matrix}
  -1 & -2 \\
  1 & 0 \\
\end{matrix}\)$$
(in logarithmic notation) is  idempotent, but also singular; i.e.,
$|A| = (-1)^\nu$. Later, cf.~Lemma \ref{quasired}, we see that
this is impossible for nonsingular matrices.

\begin{example}\label{badex} Suppose $$A =   \( \begin{matrix} 0 & 0 \\
1 & 2
\end{matrix}\)$$ in logarithmic notation.  Then $\trace{A} = 2$ and $\Det{A} = 2,$ so $$f_A =
\la^2 + 2\la + 2 = (\la +2)(\la + 0).$$   Note that $\mu(A) = 1 $
and $\a_1 = 2.$

 On the other hand,
$$A^2 =   \( \begin{matrix} 1 & 2 \\
3 & 4
\end{matrix}\);$$
$\trace{A^2} = 4$ and $\Det{A}^2 = 5^\nu,$ so $$f_{A^2} = \la^2 +
4\la + 5^\nu.$$ By Lemma~\ref{power}, $A^2$ also satisfies the
polynomial $\la^2 + 4\la +4.$ For $A^4$ we then have  $$A^4 =   \( \begin{matrix}5 & 6 \\
7 & 8
\end{matrix}\) = 4A^2,$$ and in general $A^{2k} = (A^4)^k =  4^k A^2 =
2^{2k}A^2.$

\end{example}

\subsection{Ghostpotent matrices}

In  Example~\ref{power1}, no power of $A$ is ghost, and we would
like to explore such a phenomenon.

\begin{defn} The matrix $\GP$ is \textbf{ghostpotent} if  $A^m \in M_n(\tGz)$, i.e., $A^m \lmodg (\zero)$,  for some
$m >0.$ The least such $m$ is called the \textbf{ghost index} of
the ghostpotent matrix $\GP$.
\end{defn}

It easy to check that if $A$ is ghost (i.e., has ghost index 1) so
is $A B$ for any $B \in M_n(R)$, and therefore if $A$ is
ghostpotent with ghost index $m$ then $A^k$  is ghost for any $k
\geq m$.
 However, in contrast to the classical
theory, the ghost index of an $n \times n$ ghostpotent matrix need
not be $\leq n$.  Although in cf.~\cite[Theorem
3.4]{IzhakianKnebuschRowen2009Linear} it is shown that the product
of two nonsingular matrices cannot be ghost, a ghostpotent matrix
can still  be nonsingular.

\begin{example}\label{ex:ghostpotent} The nonsingular
matrix $\GP =   \( \begin{matrix} \rzero & \rone \\
\rone & \rone
\end{matrix}\)$ is ghostpotent, for which $\GP^2 =  \( \begin{matrix} \rone & \rone \\
\rone & \rone^\nu
\end{matrix}\)$ is singular, $\GP^3 =  \( \begin{matrix} \rone & \rone^\nu \\
\rone^\nu & \rone^\nu
\end{matrix}\)$, and only for $m =4$ do we obtain the ghost matrix
$\GP^4 =  \( \begin{matrix} \rone^\nu & \rone^\nu \\
\rone^\nu & \rone^\nu
\end{matrix}\)$ (which is $\nu$-equivalent to $A^2$). Note here that  $f_A = \la^2 + \la + \rone,$ and $\mu(A) = 1
,$ even though the monomial  $\la $ is not essential, but only
quasi-essential.

\end{example}

From this example, we see that the image of the action of a
ghostpotent matrix $\GP$ on a vector space can be a thick subspace
\cite[\S 5.5]{IzhakianKnebuschRowen2009Linear}, for $\GP$ can be
nonsingular.

\begin{example}\label{ex:ghostpotent1} Let  $\GP  =  \( \begin{matrix} \rone   & a  \\
b  & \rone
\end{matrix}\)$.
\begin{enumerate} \eroman
    \item
When $ab < _{\nu} \rone,$ the
matrix $\GP ^2 =   \( \begin{matrix}   \rone & a^\nu \\
b^\nu & \rone
\end{matrix}\)$ is nonsingular idempotent (in fact a quasi-identity matrix), and thus not ghostpotent.

\item
When $ab  \nucong \rone,$ then  $\GP^2 =  \( \begin{matrix} \rone ^\nu & a ^\nu\\
b^\nu & \rone^\nu
\end{matrix}\)$, which is already ghost. \pSkip

\item  When $ab > _{\nu} \rone,$ then
 $\GP^2 =  \( \begin{matrix} ab& a ^\nu\\
b^\nu & ab
\end{matrix}\)$ and  $\GP^4 = ab  \GP^2 $. Thus, in this case,  $\GP$ is ghostpotent iff $a$ or $b$ is a
ghost.
\end{enumerate}

\end{example}

 Here is an instance where one does have a strong
bound on the ghost index.

\begin{prop}
If the only supertropical eigenvalue of $\GP$ is $\rzero$,  then
$\GP^{n} \lmodg (\rzero)$.\end{prop}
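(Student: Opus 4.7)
The plan is to argue that the hypothesis forces $f_A=\la^n$, so that Cayley--Hamilton immediately yields $A^n\lmodg(\rzero)$.

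I would proceed by the contrapositive at the coefficient level: suppose some $\a_k\ne\rzero$ for $k\ge 1$. Then by Definition~\ref{mu01} the leading characteristic coefficient $\a_\mu$ is nonzero, and the leading tangible average weight $\om=\Inu{\root{\mu}\of{\a_\mu}}\in\tT$ is a nonzero tangible element of $R$ (using that $R$ is divisible). The key claim is that $\om$ is a tangible root of~$f_A$, and hence a nonzero supertropical eigenvalue by \cite[Theorem~7.10]{IzhakianRowen2008Matrices}, contradicting the hypothesis.

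To establish the claim, I would evaluate $f_A(\om)=\om^n+\sum_{k=1}^n \a_k\om^{n-k}$. The defining property of $L(A)$ ensures that the tangible leading term $\om^n$ and every term $\a_k\om^{n-k}$ with $k\in L(A)$ share the common $\nu$-value $\om^n$, while all remaining terms are strictly $\nu$-dominated. Taking $k=\mu$: whether $\a_\mu$ is tangible or ghost, the supertropical identities $a+a=a^\nu$ and $a+a^\nu=a^\nu$ force $\om^n+\a_\mu\om^{n-\mu}$ to be ghost of $\nu$-value $\om^n$. Adding in the remaining terms (each either strictly $\nu$-smaller, or ghost, or tangible of $\nu$-value equal to $\om^n$) leaves the sum in $\tG$, since a ghost absorbs anything of smaller or equal $\nu$-value. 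Hence $f_A(\om)\in\tG$, so $\om$ is a tangible root of $f_A$.

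From this contradiction we deduce that $\a_k=\rzero$ for every $k\ge 1$; therefore $f_A=\la^n$, and the supertropical Cayley--Hamilton theorem \cite[Theorem~5.2]{IzhakianRowen2008Matrices} gives $A^n=f_A(A)\lmodg(\rzero)$, as required. The main obstacle is the ghost-verification of $f_A(\om)$: because $\a_\mu$ may itself be ghost and other $\a_k$ with $k\in L(A)$ can be of either kind, one must invoke the supertropical sum axioms rather than rely on classical cancellation; once this is handled, the rest follows mechanically from the definitions and Cayley--Hamilton.
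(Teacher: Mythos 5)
Your proposal is correct and follows essentially the same route as the paper: the hypothesis forces $f_A=\la^n$ (the paper asserts directly that $f_A$ can have no root other than $\rzero$, while you justify this by exhibiting $\om$ as a tangible root whenever some $\a_k\neq\rzero$), and then the supertropical Cayley--Hamilton theorem gives $A^n\lmodg(\rzero)$. The extra ghost-verification of $f_A(\om)$ is a sound elaboration of the step the paper leaves implicit.
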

\begin{proof}  The characteristic polynomial $f_A$ of $A$ cannot
have any roots other than $\rzero$, and thus $f_A = \la^n.$ Hence,
  $A^n
 \in M_n(\tGz)$ by  \cite[Theorem~5.2]{IzhakianRowen2008Matrices}. \end{proof}

The following result enables us to reduce ghostpotence to
irreducible matrices. (We write $\mzero$ for $(\rzero)$.)

\begin{lem}\label{ghostpot}   If $$N =  \(  \begin{matrix}  N_1 & ? & ? & \dots & ? \\
 \mzero &  N_2 & ? & \dots & ? \\ \vdots & \vdots & \ddots  & \cdots
  & \vdots \\ \mzero  &   \dots &  \mzero & N_{\tt-1}
  & ?\\  \mzero&  \dots  &  \mzero  &  \mzero & N_\tt
\end{matrix}\),$$  where $N_i^{m_i} $  is ghost, then $N^{\tt m}\in
M_{n}(\tGz)$ for any $m \ge  \max \{ m_1, \dots, m_\tt \}.$
\end{lem}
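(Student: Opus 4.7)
The plan is to induct on $\tt$, the number of diagonal blocks. For the base $\tt=1$, we have $N=N_1$, and $N^m = N_1^{m_1}\, N_1^{m-m_1}$ is ghost for any $m\ge m_1$ because $\tGz$ is a semigroup ideal.

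For the inductive step, split off the top block by writing
$$N = \(\begin{matrix} N_1 & C \\ \mzero & N' \end{matrix}\),$$
where $N'$ is the block-triangular matrix with diagonal blocks $N_2,\dots,N_\tt$. An easy induction on $k$ shows
$$N^k = \(\begin{matrix} N_1^k & C_k \\ \mzero & (N')^k \end{matrix}\), \qquad C_k = \sum_{i=0}^{k-1} N_1^i\, C\, (N')^{k-1-i}.$$
I would then apply this with $k=\tt m$ and check each of the three blocks separately. The diagonal block $N_1^{\tt m}$ is ghost because $\tt m\ge m\ge m_1$. For the block $(N')^{\tt m}$, the inductive hypothesis applied to the $(\tt-1)$-block matrix $N'$ (with the same bound $m\ge \max_{i\ge 2}m_i$) gives that $(N')^{(\tt-1)m}$ is ghost, and then $(N')^{\tt m}=(N')^{(\tt-1)m}(N')^{m}$ lies in $\tGz$ since $\tGz$ is a semigroup ideal.

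The slightly less obvious block is $C_{\tt m}$. The key numerical observation is that each summand $N_1^i\, C\, (N')^{\tt m-1-i}$ has either $i\ge m_1$, in which case $N_1^i$ is ghost, or $i<m_1\le m$, in which case
$$\tt m - 1 - i \;\ge\; \tt m - m_1 \;\ge\; (\tt-1)m,$$
so $(N')^{\tt m-1-i}$ is a power of $N'$ of exponent at least $(\tt-1)m$ and is ghost by the inductive hypothesis. In either case one of the three factors lands in $\tGz$, making the whole summand ghost. Summing, $C_{\tt m}\in M(\tGz)$, so $N^{\tt m}\in M_n(\tGz)$.

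There is no real obstacle here beyond bookkeeping: the trick is the split $\tt m = i + 1 + (\tt m - 1 - i)$, which guarantees that whenever the leading factor $N_1^i$ is not yet ``long enough'' to be ghost (i.e.\ $i<m_1$), the trailing factor is automatically long enough for the induction on $N'$ to kick in. The same splitting is what motivates the factor of $\tt$ in the bound $\tt m$ appearing in the statement.
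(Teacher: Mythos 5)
Your argument is correct. It differs from the paper's in organization rather than in substance: the paper first forms $N^m$, observes that its diagonal blocks $N_i^m$ are already ghost (since $m\ge m_i$ and $\tGz$ absorbs products), and then simply asserts that the $\tt$-th power of a block upper triangular matrix whose diagonal blocks are ghost is entirely ghost --- implicitly a pigeonhole argument: any product of $\tt$ block factors along a weakly increasing chain of block indices in $\{1,\dots,\tt\}$ must contain at least one diagonal (hence ghost) factor, and $M_n(\tGz)$ is an ideal. You instead induct on the number $\tt$ of diagonal blocks, peeling off the top block, expanding the corner block of $N^{\tt m}$ as $\sum_i N_1^i\,C\,(N')^{\tt m-1-i}$, and checking via the splitting $\tt m=i+1+(\tt m-1-i)$ that each summand contains a ghost factor. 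What your route buys is that the bookkeeping the paper leaves to the reader is made completely explicit, at the cost of a longer argument; the paper's route is shorter and isolates the clean intermediate statement (ghost diagonal blocks plus $\tt$-th power) that is also what makes the bound $\tt m$ transparent. Both yield exactly the stated bound.
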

\begin{proof}
$$N^m = \(  \begin{matrix}  N_1^m & ?? & ?? & \dots & ?? \\
 \mzero &  N_2^m & ?? & \dots & ?? \\ \vdots & \vdots & \ddots  & \cdots
  & \vdots \\ \mzero  &   \dots &  \mzero & N_{\tt-1}^m
  & ??\\  \mzero&  \dots  &  \mzero  &  \mzero & N_\tt^m
\end{matrix}\),$$ which is ghost on the diagonal blocks, and the $\tt$ power of this matrix makes everything ghost. \end{proof}

Theorem~\ref{ghostpot1} will give us a complete determination of
ghostpotent matrices.

\subsection{Computing powers of matrices}

Our next concern is to compute powers of a matrix $A$, in order to
determine whether some power $A^m$ of $A$ is a singular matrix (or
even ghost), and, if so, to determine the minimal such~$m$.  There
is no bound on the power we might need to get a singular matrix.

\begin{example}\label{higho} $ $

 Let   $A = \(  \begin{matrix} a & \rone \\
\rone & b
\end{matrix}\)$, where $ a >_\nu b >_\nu \rone$ are tangible, and thus $A$ is nonsingular.   \text{Then } \
$$ A^k =  \left( \begin{matrix} a^{k} & a^{k-1} \\
a^{k-1}  & a^{k-2} +b^k \end{matrix}\right)= a^{k-1} \left( \begin{matrix} a  &\rone\\
\rone  & a^{-1} +\frac{b^k}{a^{k-1}} \end{matrix}\right).$$ We are
interested in the lower right-hand term. This is
$\frac{b^k}{a^{k-1}}$ (and thus tangible) so long as it dominates
$a^{-1}$. On the other hand, if $b^k \nucong a^{k-2}$ then $$ A^k =   a^{k-1} \left( \begin{matrix} a  & \rone\\
\rone  & (a^{-1})^\nu   \end{matrix}\right),$$ and if   $b^k <_\nu a^{k-2}$ then $$ A^k =   a^{k-1} \left( \begin{matrix} a  & \rone\\
\rone  & a^{-1}    \end{matrix}\right),$$ both of which are
singular.

In other words, taking tangible $c> _\nu \rone,$ if $a = c^{k}$
and $b = c^{k-2}$, then $A^{k-1}$ is nonsingular whereas $A^k$ is
singular.
\end{example}

Nevertheless, one can get information from the  leading
characteristic coefficient. We write $\mu$ for $\mu(A)$,
cf.~Definition~\ref{mu01}.
 As   noted following
\cite[Definition~5.1]{IzhakianRowen2008Matrices}, the coefficient
$\a_\ell$ of the characteristic  polynomial~$f_A$ is the sum of
the weights of the $\ell$-multicycles in the graph $\grph_A$.
Thus, the leading characteristic coefficient~$\a _\mu$ of~$A$ is
the sum of the weights of the multicycles of length $\mu$ (also
having maximal average weight, whose tangible value is denoted as
$\om $) in the weighted digraph $\grph_A$ of $A$. Accordingly, let
us explore the multicycles contributing to $\a _\mu.$

\begin{lem}  Any multicycle contributing to
the  leading characteristic coefficient must be a
\scycle.\end{lem}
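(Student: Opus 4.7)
The plan is to argue by contradiction, using the minimality condition built into $\mu = \mu(A)$. Fix a $\mu$-multicycle $M$ whose weight $w(M)$ is dominant in $\a_\mu$ (so $w(M) \cong_\nu \omega^\mu$, where $\omega$ is the leading tangible average weight), and write $M = C_1 \cup \cdots \cup C_r$ as its disjoint union of \scycle s $C_i$ of length $\ell_i$, with $\sum_{i=1}^r \ell_i = \mu$. The goal is to show $r = 1$.

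The first step is to record a uniform upper bound on weights. For every $\ell \in \{1,\dots,n\}$ and every $\ell$-multicycle $N$ of $\grph_A$, the weight $w(N)$ appears as a summand of $\a_\ell$ in the supertropical permanent expansion, so $w(N) \le_\nu \a_\ell$; combined with the defining inequality $\sqrt[\ell]{\a_\ell} \le_\nu \omega$ from the definition of $\omega$, this gives $w(N) \le_\nu \omega^\ell$.

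The second step is to sharpen this to a strict bound whenever $\ell < \mu$. By the minimality of $\mu$ inside the set $L(A)$ of Equation~\eqref{eq:LA}, no $\ell < \mu$ lies in $L(A)$, hence $\sqrt[\ell]{\a_\ell} <_\nu \omega$ strictly, yielding the strict inequality $w(N) <_\nu \omega^\ell$ for every $\ell$-multicycle $N$ with $\ell < \mu$.

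The third step is to assemble the contradiction. If $r \ge 2$, then each $\ell_i$ is strictly less than $\mu$, so by the second step $w(C_i) <_\nu \omega^{\ell_i}$ for every $i$. Since strict $\nu$-dominance propagates through products in the totally ordered value group of the supertropical \field0\ $R$, multiplying these inequalities gives
$$w(M) = \prod_{i=1}^r w(C_i) <_\nu \prod_{i=1}^r \omega^{\ell_i} = \omega^\mu,$$
contradicting $w(M) \cong_\nu \omega^\mu$. Therefore $r = 1$, i.e., $M$ is a single \scycle. The only subtlety is the interpretation of ``contributing to the leading characteristic coefficient'' as meaning $w(M) \cong_\nu \a_\mu \cong_\nu \omega^\mu$ (so that $w(M)$ is a dominant summand in the supertropical sense), but this is exactly the convention implicit in Definition~\ref{mu01}; everything else is a direct calculation with average weights.
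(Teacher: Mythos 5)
Your proof is correct and is essentially the paper's argument in contrapositive form: the paper decomposes the offending multicycle into disjoint \scycle s and extracts one of shorter length with average weight $\geq_\nu \om$, contradicting the minimality of $\mu$, while you use that same minimality to bound every shorter constituent strictly below $\om$ in average weight and contradict the dominance of the multicycle. The underlying idea (decomposition into \scycle s plus comparison of average weights against $\om$ via the definition of $\mu$) is the same, so no substantive difference.
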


\begin{proof}  If some dominant such
multicycle were not a \scycle, it could be subdivided into smaller
disjoint \scycle s, at least one of which would have average
weight
  $ \geq_\nu \om $ (and a shorter length) and thus which would give a leading
characteristic coefficient of lower degree, contrary to the
definition of leading characteristic coefficient.
\end{proof}

Thus, we can focus on  \scycle s.

\begin{defn} A \textbf{leading \scycle} is a \scycle\  whose average weight is
$\nu$-equivalent to $\om $. A \textbf{leading $\ell$-\scycle} is a
leading \scycle \ of length $\ell$. (In particular, $\a _\ell$
equals the sum of the weights of the leading $\ell_i$-\scycle s
with $\sum \ell_i = \ell$.) The number of leading $\ell$-\scycle s
is denoted $\tau_\ell$.
Given an index
 $i$, we
 define its
\textbf{depth} $\rho_{i}$ to
 be the number of leading \scycle s of $\grph_A$ containing $i$.
 \end{defn}

In view of~\eqref{eq:muA}, the length $\ell$ of a leading
$\ell$-\scycle\ must be between $\mu$ and
  the least degree of monomials in the essential characteristic
  polynomial.
  In Example~\ref{ex:ghostpotent} there are leading
  \scycle s of length both $1$ and~$2$.
Note that
  $\rho_i = \mu =1$ where  $\tau_i = 1,$ for $i = 1,2$.

\begin{example} Any quasi-identity matrix satisfies $\mu = \rho_i  =1$ for each $i$, whereas $\tau_\mu = n$.  \end{example}

\begin{lem}  If the  leading characteristic coefficient $\a
_\mu$ of $A$ is tangible, then $\tau_\mu = 1$.\end{lem}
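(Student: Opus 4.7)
The proof is a direct application of the supertropical addition rule and the preceding lemma. Here is the plan.

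First, by the lemma immediately preceding the statement (``Any multicycle contributing to the leading characteristic coefficient must be a \scycle''), every multicycle of length $\mu$ whose weight contributes to $\a_\mu$ is in fact a single \scycle\ of length $\mu$, i.e., a leading $\mu$-\scycle. Thus I can write
\[
\a_\mu \; = \; \sum_{k=1}^{\tau_\mu} w(C_k),
\]
where $C_1,\dots,C_{\tau_\mu}$ enumerate the leading $\mu$-\scycle s of $\grph_A$.

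Next, by the definition of ``leading \scycle,'' each $C_k$ has average weight $\nu$-equivalent to $\om$, so $w(C_k) \nucong \om^\mu$ for all $k$. Now I invoke the key supertropical identity \eqref{supertr}, together with axiom (i) of a supertropical \semiring0: whenever $a \nucong b$ one has $a+b = a^\nu \in \tG$. Applied inductively to the above sum, if $\tau_\mu \geq 2$, then $\a_\mu \nucong \om^\mu$ and $\a_\mu \in \tG$, so $\a_\mu$ is a ghost.

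The contrapositive gives exactly the claim: if $\a_\mu$ is tangible, then $\tau_\mu = 1$. There is no real obstacle here; the only subtlety is to make sure to cite the preceding lemma so that we are summing weights of \scycle s rather than genuine multicycles (which could a priori decompose into several disjoint \scycle s of smaller length and still contribute to $\a_\mu$).
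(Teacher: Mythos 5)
Your proof is correct and is essentially the paper's argument: the paper's proof is the one-line observation that otherwise $\a_\mu$ would be a sum of several $\nu$-equivalent weights and hence ghost, which is exactly your contrapositive via the supertropical addition axiom. Your extra care (citing the preceding lemma so the dominant contributions are leading $\mu$-\scycle s of weight $\nu$-equivalent to $\om^\mu$, with any dominated multicycles absorbed) just makes explicit what the paper leaves implicit.
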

\begin{proof}  Otherwise, $\a
_\mu$ would be the sum  of several  $\nu$-equivalent weights, and
thus must be ghost.
\end{proof}

\begin{lem}\label{mu1} $\mu(A^\mu)=1$, for any matrix $A$.
\end{lem}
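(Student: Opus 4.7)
The plan is to verify directly that $1 \in L(A^\mu)$; since $\mu(A^\mu)$ is by definition the minimum of $L(A^\mu)$, this will suffice. Writing $f_{A^\mu} = \la^n + \sum_{k=1}^n \a'_k \la^{n-k}$, the condition $1 \in L(A^\mu)$ says exactly that $\a'_1 \nuge \root k \of {\a'_k}$ for every $1 \le k \le n$, equivalently that $(\a'_1)^k \nuge \a'_k$.

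First I would establish the lower bound $\a'_1 \nuge \om^\mu$. Fix any leading $\mu$-\scycle\ $C$ of $\grph_A$ (one exists by definition of $\mu$), and let $i$ be a vertex of $C$. Then $C$ is a closed walk of length exactly $\mu$ from $i$ to $i$ in $\grph_A$, so $\weight{C} \nucong \om^\mu$ appears as one of the summands of the diagonal entry $(A^\mu)_{i,i}$. Hence $\a'_1 = \trace{A^\mu} \nuge (A^\mu)_{i,i} \nuge \om^\mu$.

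Next, for the matching upper bound $\a'_k \nule \om^{k\mu}$, I would argue combinatorially on $\grph_{A^\mu}$. Each edge $(s,t)$ of $\grph_{A^\mu}$ has weight $(A^\mu)_{s,t}$, which is the sum of weights of $\mu$-paths from $s$ to $t$ in $\grph_A$. A $k$-multicycle $M'$ of $\grph_{A^\mu}$ is a disjoint union of \scycle s in $\grph_{A^\mu}$ of total edge-count $k$, and after choosing a $\mu$-path in $\grph_A$ for each of its edges, the \scycle s of $M'$ expand into closed walks in $\grph_A$ with combined length $k\mu$; the weight $\weight{M'}$ is $\nu$-equal to the maximum, over such expansions, of the product of the chosen $\mu$-path weights. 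Invoking the standard fact that every closed walk in a digraph decomposes into \scycle s whose lengths sum to the walk's length, each expansion factors into \scycle s $C_1, \dots, C_r$ of $\grph_A$ with $\sum_j \len{C_j} = k\mu$ and total weight $\prod_j \weight{C_j}$. Since each $C_j$ of length $\ell_j$ is itself a single-\scycle\ $\ell_j$-multicycle contributing to $\a_{\ell_j}$, we have $\weight{C_j} \nule \a_{\ell_j} \nule \om^{\ell_j}$ by the defining property of $\om$; hence $\weight{M'} \nule \om^{k\mu}$, and summing over all $k$-multicycles of $\grph_{A^\mu}$ yields $\a'_k \nule \om^{k\mu}$.

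Combining the two bounds gives $\root k \of {\a'_k} \nule \om^\mu \nule \a'_1$ for every $k$, so $1 \in L(A^\mu)$ and therefore $\mu(A^\mu) = 1$. The most delicate step I expect is the bookkeeping in the combinatorial reduction of a $k$-multicycle in $\grph_{A^\mu}$ to a product of weights of \scycle s in $\grph_A$ of total length $k\mu$: the closed-walk decomposition itself is classical, but one must be careful that the resulting \scycle s may share vertices across different walks (since the $\mu$-paths may revisit vertices), so the $C_j$ do not in general form a multicycle of $\grph_A$ — only a length-$k\mu$ collection of \scycle s to which the average-weight bound can be applied termwise.
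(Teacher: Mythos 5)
Your proof is correct and takes essentially the same route as the paper: the paper's proof also fixes a leading $\mu$-\scycle\ of $A$ through a vertex $i$ and observes that the loop $(i,i)$ in $\grph_{A^\mu}$, of weight $\nu$-equivalent to $\om^\mu$, is a leading $1$-\scycle\ of $A^\mu$, forcing $\mu(A^\mu)=1$. The only difference is one of detail: the paper dismisses the dominance of this loop as ``clearly a leading \scycle\ of $A^\mu$,'' whereas you verify the needed upper bound $\a'_k \nule \om^{k\mu}$ explicitly by expanding $k$-multicycles of $\grph_{A^\mu}$ into closed walks of length $k\mu$ in $\grph_A$ and extracting \scycle s of average weight at most $\om$ (the extraction argument of Remark~\ref{extract}), including the correct caveat that these \scycle s need not be disjoint.
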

\begin{proof} Let $A^\mu = (b_{i,j}),$ and  let $\cyc_\mu = C(i,i)$ be a leading $\mu$-scycle \ of $A$.
Then, $ b_{i,i} = w(C_\mu)$, and $(i,i)$ is a  $1$-multicycle of
$A^\mu$, which  is comprised of just one \scycle \ of length $1$,
and is clearly a leading \scycle \ of~$A^\mu$.
\end{proof}

\begin{defn}\label{core} A \scycle\ is \textbf{core-admissible} if each of its vertices has
depth 1; i.e., it is disjoint from each other leading \scycle.
 The \textbf{core} of an irreducible matrix
$A$, written $\coreA$, is the \multi\ comprised of the union of
all core-admissible leading \scycle s. The \textbf{tangible core}
of $A$, written $\tancoreA$, is the \multi\ comprised of the union
of all tangible core-admissible leading \scycle s.

\end{defn}

Thus, a leading \scycle\ is part of the core iff its vertex set is
disjoint from all other leading \scycle s in~$A$. Note that
$\tcoreA \subseteq \coreA$,  and also note that
$\coreA$  and  $\tcoreA$ can be empty; for example the $\core$ of  $A = \left(%
\begin{array}{cc}
  \rone & \rone \\
  \rone & \rone \\
\end{array}%
\right)$ is empty.

We write $(A)_{\core}$ (resp. $(A)_{\tancore}$) to denote the
submatrix of
 $A$ comprised of the rows and
  columns corresponding to the indices of $\mV(\coreA)$ (resp.
  $\mV(\tancoreA)$).

  The idea
behind the core is that the vertex set of $\tancoreA$ is comprised
precisely of those vertices of leading \scycle s which contribute
tangible weights to high powers of $A$. The other vertices of
leading \scycle s provide the ``ghost part,'' so we also consider
them.

 \begin{defn} The \textbf{anti-tangible-core},  denoted $\acore (A)$,
  is the \multi\ comprised of the union of those
leading \scycle s which are not in $\tancoreA$. We write
$(A)_{\acore}$ to denote the submatrix of
 $A$ comprised of the rows and
  columns corresponding to $\Vacore.$
\end{defn}

The anti-tcore is the set of vertices $i$   for which the $(i,i)$
entry of high powers of $A$ become ghosts. (Note that $\coreA \cap
\acoreA$ could be nonempty, when $A$ has a core-admissible cycle
with ghost weight.)

 The leading
\scycle s appear in the following basic computation.

\begin{rem}\label{extract} The $(i,j)$ entry of a given power $A^m$ of $A$ is the
sum of weights of all paths of length~$m$ from $i$ to $j$ in
$\grph_A.$ By the pigeonhole principle,  any   path $p$ of length
$\ge n$ contains an \scycle\ $\cyc$ (since some vertex must
repeat, so $\weight {p} = \weight{\cyc} \weight{p'}$ where $p'$ is
the path obtained by deleting the \scycle\ $\cyc$ from $p$.
Continuing in this way enables us to write $\weight {p} $ as the
product of weights of \scycle s times a simple path of length $<n$
from $i$ to $j$.

If $i=j$, then $\weight{p}$ can thereby be written as a product of
weights of \scycle  s. 
 Since, by
definition, the average weight of each \scycle\ must have
$\nu$-value at most $\om  $, the weight of $p$ is at most $w^m,$
the maximum being attained when all the \scycle s are leading
\scycle s.

If $i\ne j,$ one has to consider all paths of length $<n$ from $i$
to $j$ and take the maximum weight in conjunction with those of
the \scycle s; this is more complicated, but we only will need
certain instances.\end{rem}

\begin{example} Taking $$A = \(  \begin{matrix} \rzero & \rzero & \rone & \rone & \rzero\\
\rzero & \rzero & \rzero & \rzero & \rone \\ \rzero & \rone  &
\rzero & \rzero & \rzero\\ \rzero & \rone  & \rzero & \rzero &
\rzero\\ \rone  & \rzero & \rzero & \rzero & \rzero
\end{matrix}\),$$ we have $\tV(\core(A)) = \emptyset$ since the two leading \scycle s
$(1,3,2,5)$ and $(1,4,2,5)$ intersect at the vertex 2. But $$A^2 =  \(  \begin{matrix} \rzero & \rone^\nu  & \rzero  & \rzero  & \rzero\\
\rone  & \rzero & \rzero & \rzero & \rzero \\ \rzero & \rzero &
\rzero & \rzero & \rone \\ \rzero & \rzero & \rzero & \rzero &
\rone \\ \rzero & \rzero &  \rone  &  \rone  &  \rzero
\end{matrix}\),$$  and so $\tV(\core(A^2)) = \{ 1, 2\}$ whereas
still $\tV(\tancore(A^2)) = \emptyset $ since the leading two
tangible \scycle s $(3,5)$ and $(4,5)$ intersect.
\end{example}

Thus, our next result is sharp. By $\cyc^k$ we mean the
concatenation of $\cyc$ taken $k$ times; for example,
$$(1,3,5,2,1)^3 = (1,3,5,2,1,3,5,2,1,3,5,2,1).$$
In the reverse direction, we can decompose a cycle into a union of
other cycles by ``skipping'' vertices. For example, skipping two
vertices each times decomposes $(1,3,5,2,1,4,5,2,1,3,4,2,1)$ into
the three cycles $(1,2,5,3,1)$, $(3,1,2,4,3)$, and $(5,4,1,2,5)$.

\begin{prop}\label{rmk:core1} $ $
\begin{enumerate} \eroman
    \item
If $\cyc$ is a leading \scycle\ in $\grph _{A}$ having average
weight $\om $, then for any $k\in \Net$,  the cycle $\cyc ^k$
decomposes into a union of leading \scycle s for $\grph _{A^k}$
each having average weight $\om ^k$, where we take every $k$
vertex, as indicated in the proof. \pSkip

\item    $\tV(\core(A)) \subseteq \tV(\core(A^k))$ and
$\tV(\tancore(A)) \subseteq \tV(\tancore(A^k))$, for any $k \in
\Net$. \pSkip

\item  $\tV(\tancore(A)) = \tV(\tancore(A^k))$, for any $k \in
\Net$.
\end{enumerate}
\end{prop}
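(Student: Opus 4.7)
For (i), I label $C = (i_0, i_1, \ldots, i_{\ell-1}, i_0)$ with average weight $\omega$ and length $\ell$, and set $d = \gcd(k,\ell)$. Passing $C^k$ to $\grph_{A^k}$ by taking every $k$-th vertex gives the subsequence $i_0, i_k, i_{2k}, \ldots$ modulo $\ell$; the shift-by-$k$ map on $\Z/\ell\Z$ has exactly $d$ orbits of size $\ell/d$, each yielding a simple cycle of length $\ell/d$ in $\grph_{A^k}$ on a subset of $\tV(C)$. Any \scycle\ of $\grph_{A^k}$ lifts to a closed walk in $\grph_A$ whose decomposition into \scycle s (as in Remark~\ref{extract}) bounds its weight by $\omega^{\text{length}}$, giving $\omega(A^k) \leq_\nu \omega^k$; the constructed cycles achieve this bound because each of their edge entries in $A^k$ receives at least the $C$-path contribution of weight $\omega^k$. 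Hence each is a leading \scycle\ of $\grph_{A^k}$ with average weight $\omega^k$.

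For (ii), the core inclusion: $i \in \tV(\core(A))$ lies on a core-admissible leading \scycle\ $C$, so by (i), $i \in \tV(\tilde C_j)$ for one of the $d$ leading \scycle s from the decomposition of $C^k$. Suppose another leading \scycle\ $\tilde D$ of $\grph_{A^k}$ shares a vertex $v$ with $\tilde C_j$, so $v \in \tV(C)$. Lifting $\tilde D$ to a closed walk $Q$ in $\grph_A$ from $v$ of weight $\omega^{k\len{\tilde D}}$, the weight condition forces every \scycle\ in any decomposition of $Q$ to be leading in $A$, and the one through $v$ must be $C$ by core-admissibility at $v$; iterating at each visit of $Q$ to $\tV(C)$ shows $Q = C^r$, so $\tV(\tilde D) \subseteq \tV(C)$. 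The disjoint partition $\tilde C_1, \ldots, \tilde C_d$ of $\tV(C)$ then forces $\tilde D = \tilde C_j$, a contradiction. For the tangible part, any length-$k$ $A$-path of weight $\omega^k$ from $i_{jk}$ to $i_{(j+1)k}$, extended by a $C$-completion back to $i_{jk}$, becomes a closed walk of length divisible by $\ell$ which by the above must be a power of $C$; hence the original path is the $C$-path, so the corresponding $A^k$ entry is the unique max-weight realization and is tangible when $C$ is, making $\tilde C_j$ tangible.

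For (iii), the inclusion $\tV(\tancore(A)) \subseteq \tV(\tancore(A^k))$ is the tangible part of (ii). For the reverse, take $i \in \tV(\tancore(A^k))$ lying on a tangible core-admissible leading \scycle\ $\tilde E$ of $\grph_{A^k}$. Tangibility of each $A^k$-entry along $\tilde E$ gives a unique max-weight length-$k$ $A$-path lift of each edge; concatenating these produces a unique closed walk $P$ in $\grph_A$ from $i$ of length $k\len{\tilde E}$ and weight $\omega^{k\len{\tilde E}}$. In any decomposition of $P$, each \scycle\ is leading, and since the product of their weights equals the tangible $\omega^{k\len{\tilde E}}$ while any product containing a ghost factor is itself ghost, each \scycle\ in the decomposition is tangible. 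Let $D$ be the \scycle\ through $i$ in a fixed decomposition; $D$ is a tangible leading \scycle\ of $A$ through $i$. If $D$ is not core-admissible, some other leading \scycle\ $D' \neq D$ of $A$ shares a vertex $v \in \tV(D) \subseteq \tV(P)$ with $D$; combining $P$ with the $D'^k$-decomposition of (i) at $v$ produces either a second max-weight length-$k$ $A$-path between consecutive $\tV(\tilde E)$-vertices (violating tangibility of $\tilde E$) or a distinct tangible leading \scycle\ of $A^k$ sharing a vertex with $\tilde E$ (violating core-admissibility of $\tilde E$). Either way, $D$ must be core-admissible, so $i \in \tV(\tancore(A))$.

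The main obstacle is the final step of (iii) when $v$ does not lie at a position of $P$ divisible by $k$, so that $v \notin \tV(\tilde E)$. The cleanest route is to prove as a preliminary that tangibility plus core-admissibility of $\tilde E$ forces $P = D^r$ for a single leading \scycle\ $D$ through $i$, using a swap argument that exploits uniqueness of $P$: any attempted decomposition of $P$ featuring a second distinct leading \scycle\ at a shared vertex allows substitution producing a second walk of the same starting vertex, length, and weight as $P$, contradicting uniqueness. Once $P = D^r$ is known, core-admissibility of $D$ follows by the same swap argument applied to any hypothetical $D'$ sharing a vertex of $\tV(D)$, and then $i \in \tV(\tancore(A))$ is immediate.
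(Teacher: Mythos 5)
Your parts (i) and (ii) are correct and follow essentially the paper's route: decompose $\cyc^k$ by taking every $k$-th vertex (your gcd/orbit count just makes the paper's ``$(i_1,i_{1+k},\dots)(i_{j_2},i_{j_2+k},\dots)\cdots$'' precise), bound any \scycle\ of $\grph_{A^k}$ by extracting \scycle s of $A$, and use core-admissibility to rule out a second leading \scycle\ of $A^k$ through a core vertex. In fact you are more careful than the paper on one point it glosses: for the tangible inclusion in (ii) it is not enough that $w(\cyc)$ is tangible, since an entry of $A^k$ is a sum over all length-$k$ paths and a tying path would make it ghost; your completion-to-a-closed-walk argument, which shows a tying path would force a leading \scycle\ of $A$ meeting $\cyc$, is the right way to exclude this.

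The genuine gap is in (iii), exactly where you flag it, and your proposed repair does not yet close it. Establishing $P=D^r$ from the swap argument is fine (pairwise vertex-disjointness of the \scycle s in a decomposition plus connectedness of a walk forces a single \scycle), but the decisive step is core-admissibility of $D$ against an \emph{external} leading \scycle\ $D'$ of $A$ meeting $D$ at a vertex $v$, and here ``the same swap argument'' is not available as stated: $D'$ need not appear in $P$, $\len{D'}$ need not equal $\len{D}$, and $v$ need not lie in $\tV(\tilde E)$, so there is no length-preserving substitution inside $P$ whose output can be compared with the unique lifts of the edges of $\tilde E$. What is needed is the extra trick of passing to a power: work with the lift of $\tilde E^N$ (i.e.\ $P$ repeated $N$ times), trade $\len{D'}$ traversals of $D$ for $\len{D}$ traversals of $D'$ at $v$ (equal total length, $\nu$-equivalent weight), and then argue -- using core-admissibility of $\tilde E$ in $\grph_{A^k}$, by the same escape-edge/chord argument as in your (ii) -- that the modified walk must still pass through the vertices of $\tilde E$, in order, at every position divisible by $k$; only then does each length-$k$ segment tie the corresponding tangible entry of $A^k$, contradicting uniqueness of the lifts. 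Without this step the configuration you yourself single out ($v\notin\tV(\tilde E)$, $\len{D'}\neq\len{D}$) is untreated. For what it is worth, the paper's own proof of (iii) is a one-line extraction remark in the same spirit as yours, so your plan is the natural one; but as a complete argument the key step above is missing from your write-up.
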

\begin{proof} (i) Let $\cyc = (i_1, \dots, i_\mu).$ Then $\cyc ^k = (i_1, \dots, i_\mu)\cdots (i_1, \dots, i_\mu),$
which in $\grph _{A^k}$ appears as
\begin{equation}\label{cycpow} (i_1, i_{1+k}, \dots)(i_{j_2},
i_{j_2+k}, \dots)\cdots.\end{equation} But $\weight{\cyc'} \le_\nu
\om^k$ in $\grph _{A^k}$, since otherwise we could extract a cycle
in $\grph _{A}$ of weight $>_\nu \om,$ contrary to hypothesis. It
follows that each cycle in \eqref{cycpow} has average weight
$\om^k$.

(ii) If an index  $i \in \tV(\core(A)) $ appears in a single
leading \scycle\ $\cyc$ of $\grph _{A}$ then it appears in the
corresponding leading \scycle\ of $\grph _{A^k}$, according to
(i), and cannot appear in another one, since otherwise we could
extract another leading \scycle\ of $\grph _{A}$ whose vertex set
intersects that of $\cyc$, contrary to hypothesis. Furthermore, if
$\cyc$ has tangible weight, then so does $\cyc ^k$.

(iii)  The same argument as in Remark~\ref{extract} shows that,
for any $m$ and any leading \scycle\ $\cyc'$ of $A^k$, we can
extract leading \scycle s of $A$ until we obtain the empty set. If
their vertices are all in $\tV(\core(A)),$ then clearly
$\tV(\cyc') \subseteq \tV(\core(A^k)),$ as desired.
\end{proof}

\begin{lem}\label{dupl} There is a number $m = m(n, \mu)$ such
that,
 for  any path  $p$  of length $ \ge m$ from
vertices $i$ to $j$ in $\grph _{A}$ having maximal weight and for
which its vertex set $\tV(p)$ intersects $\tV(\coreA)$, $p$
contains a set of leading \scycle s of total length a multiple
$\ell \mu$ for some $\ell \le n$.

When $\mu =1,$ we can take $m = 2n-1.$ In general, we can take
 $m = (\binom {n+1}2 -\mu)(\mu -1) +2(n-1) +1.$
\end{lem}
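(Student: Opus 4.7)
The strategy combines the decomposition from Remark~\ref{extract} with a weight-replacement argument exploiting the maximality of $p$. Applying Remark~\ref{extract}, we write $p = p_0 \cup C_1 \cup \cdots \cup C_k$, where $p_0$ is a simple subpath from $i$ to $j$ of length $s \le n-1$ and each $C_r$ is an \scycle\ attached at a vertex of $p_0$, so that $|p| = s + \sum_r \len{C_r}$.

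The central step is to bound the number of non-leading \scycle s among the $C_r$. Fix a core vertex $v \in \tV(p) \cap \tV(\coreA)$, and let $\cyc_v$ be the unique core-admissible leading \scycle\ through $v$, of length $\mu_v \ge \mu$. Suppose at least $\mu_v$ of the $C_r$ are non-leading. A partial-sum pigeonhole argument modulo $\mu_v$ applied to their lengths then yields a subset whose total length is a positive multiple $q \mu_v$ of $\mu_v$. Deleting these non-leading \scycle s from $p$ and inserting $q$ copies of $\cyc_v$ at $v$ produces a walk from $i$ to $j$ of the same length; its weight strictly dominates $\weight{p}$ in $\nu$-value, since each copy of $\cyc_v$ contributes $\om^{\mu_v}$ whereas each non-leading \scycle\ $C$ contributes strictly less than $\om^{\len{C}}$. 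This contradicts the maximality of $p$, so at most $\mu_v - 1$ of the $C_r$ are non-leading.

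An analogous partial-sum argument, now applied modulo $\mu$ to the leading $C_r$, gives the desired conclusion: if some leading \scycle\ has length a multiple of $\mu$, that single \scycle\ suffices (with $\ell \le n/\mu \le n$); otherwise, as soon as the decomposition contains at least $\mu$ leading \scycle s, some subset of size at most $\mu$ of them has total length $\ell\mu$ with $\ell \le n$, since each \scycle\ has length $\le n$. It remains to lower-bound $|p|$ so as to force one of these two situations. Combining $s \le n-1$, $\len{C_r} \le n$, the bound of $\mu_v - 1 \le n-1$ non-leading \scycle s, and the bound of $\mu - 1$ leading \scycle s whose lengths avoid multiples of $\mu$, a worst-case accounting produces the threshold $m = (\binom{n+1}{2} - \mu)(\mu - 1) + 2(n-1) + 1$. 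For $\mu = 1$ the second pigeonhole is trivial and one leading \scycle\ suffices, reducing the bound to $m = 2n - 1$.

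The main obstacle is in the replacement step: one must verify that the surgical operation of deleting \scycle s from $p$ and inserting copies of $\cyc_v$ at $v$ produces a legitimate walk of the same length from $i$ to $j$. This rests on the fact that the \scycle s in Remark~\ref{extract}'s decomposition are disjoint sub-walks attached at fixed vertices of $p_0$, so that deletion is well defined, and on the hypothesis $v \in \tV(p)$ permitting arbitrary numbers of insertions of $\cyc_v$ at $v$. A secondary combinatorial difficulty lies in extracting the precise constant $m$: the quantity $\binom{n+1}{2} - \mu$ reflects the extremal sum of distinct allowable \scycle-lengths avoiding multiples of $\mu$, and optimizing over worst-case configurations requires careful bookkeeping modulo both $\mu$ and $\mu_v$.
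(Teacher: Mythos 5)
Your qualitative skeleton (decompose $p$ as a simple path plus extracted \scycle s, use weight-maximality to run exchange arguments against a leading \scycle\ through a vertex of $p$, then a partial-sum pigeonhole modulo $\mu$) is reasonable, and your step extracting a subset of leading \scycle s of total length $\ell\mu$ from any $\mu$ leading \scycle s is fine. But the quantitative part of the statement is not delivered by your scheme, and the place where it breaks is your bound on non-leading \scycle s. You exchange against the core \scycle\ $\cyc_v$ through the chosen core vertex, whose length $\mu_v$ is not $\mu$ in general and can be as large as $n$; your pigeonhole modulo $\mu_v$ therefore only bounds the number of non-leading \scycle s by $\mu_v-1\le n-1$, each of length up to $n$. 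Consequently a maximal path of length $2n-1$ (indeed of length on the order of $n^2$) may, under your argument, decompose as a simple path plus a single non-leading \scycle, with no leading \scycle\ certified at all; so for $\mu=1$ you cannot conclude anything at $m=2n-1$, and in general the ``worst-case accounting'' you invoke gives at best something like $(n-1)+(\mu_v-1)n+(\mu-1)n+1$, which neither reduces to $2n-1$ nor matches $(\binom{n+1}{2}-\mu)(\mu-1)+2(n-1)+1$. Your closing remark that $\binom{n+1}{2}-\mu$ is ``the extremal sum of distinct allowable \scycle-lengths avoiding multiples of $\mu$'' is also off: it is simply $\sum_{\ell\ne\mu,\,1\le\ell\le n}\ell$, the sum of all possible lengths other than $\mu$ itself, which signals that the stated constant was not actually derived.

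The paper's route, which you did not take, is the one that produces both the conclusion and the constants: fix a leading \scycle\ $\cyc_\mu$ of length exactly $\mu$ meeting $\tV(p)$. If $p$ contains any \scycle\ of length $\mu$, deleting it and inserting $\cyc_\mu$ shows by maximality that it is leading, and you are done with $\ell=1$. Otherwise every extracted \scycle\ has length in $\{1,\dots,n\}\setminus\{\mu\}$, and the key exchange is to replace $\mu$ extracted \scycle s of a \emph{common} length $\ell$ (or, when $\mu=1$, a single \scycle\ of any length $\ell$) by $\ell$ copies of $\cyc_\mu$: maximality then forces each of those $\mu$ \scycle s to be leading, and their total length is $\ell\mu$. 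Non-leading \scycle s never need to be counted separately, and the worst case is each length class $\ell\ne\mu$ occurring at most $\mu-1$ times, giving total extracted length at most $(\mu-1)\bigl(\binom{n+1}{2}-\mu\bigr)$, plus $2(n-1)$ for the simple portions of $p$ before and after the chosen vertex; this is exactly the stated $m$, and for $\mu=1$ it collapses to $2n-1$. (Two further points: your surgery needs the insertion vertex $v$ to survive the deletions, which is not automatic if $v$ lies inside a deleted \scycle; and, to be fair, your use of $\mu_v$ reflects a genuine imprecision in the hypothesis --- the core only guarantees a leading \scycle\ of some length through $\tV(p)$ --- but resolving it your way sacrifices the claimed bounds, whereas the stated constants require working with a leading $\mu$-\scycle.)
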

\begin{proof} Take a leading \scycle\ $\cyc_\mu$   of length $\mu$,
with  $k \in \tV(\cyc_\mu) \cap \tV(p)$. We are done (with $\ell =
1$) if $p$ contains a \scycle\ $\cyc$ of length $\mu$, since we
could delete $\cyc$ and insert $\cyc_\mu$ at vertex $k$, contrary
to the hypothesis of maximal weight unless $\cyc$ itself is a
leading \scycle. Thus, we may assume that $p$ contains no \scycle\
of length $\mu$. Next, any path of length $\ge n$ starting from
$k$ contains a \scycle\ that could be extracted, so if the path
has length $\ge s+(n-1)$ we could extract  \scycle s of total
length at least $s$; likewise for any path ending at $k$.

Thus, for $\mu =1,$ we could take $s=1$ and $m = (n-1)+(n-1)+1 =
2n-1.$ Then we  are be able to extract a \scycle\ of some length
$\ell \le n,$ which we could replace by $\ell $ copies of
$\cyc_\mu$ each of which is a leading \scycle.

In general, if $p$ has length at least $(\binom {n+1}2 -\mu)(\mu
-1) +2(n-1) +1,$ using the pigeonhole principle, we are be able to
extract $\mu$ \scycle s of some length $\ell \le n,$ which we
could replace by $\ell $ copies of $\cyc_\mu$, so by the same
argument, each of these \scycle s has average weight $\om $, and
thus is a leading \scycle.
\end{proof}

Let $\tlmu $ denote the least common multiple of the lengths of
leading  \scycle s of $A$; i.e.,  every $\ell \in L(A)$ divides $
\tlmu ,$
 cf.~\eqref{eq:LA}. In particular, $\mu$ divides ~$\tlmu $.

\begin{prop}\label{prop:4.20}

Suppose $\cyc$ is an \scycle\ in $\coreA$, with $i \in \mV(\cyc)$,
of weight $w_C = w(C)$ and of length $\ell = \ell (C)$.
 Then, for any $k$, the $(i,i)$-diagonal
entry of $A^{k\tlmu }$ is $(w_C)^{k \tlmu / \ell}$, which is
$\nu$-equivalent to~$\om ^{k \tlmu }$. Furthermore, assuming that
$\coreA$ is nonempty,
$$|(A^{k\tlmu })_{\core}|= \prod_{C \subset  \coreA} (w_C)^{k
\tlmu / \ell(C)},$$ i.e.,  $|(A^{k\tlmu })_{\core}| \nucong \om
^{k s\tlmu } $, where $s= \#(\tV(\coreA))$.\end{prop}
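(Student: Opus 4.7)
The plan is to analyze, via Remark~\ref{extract}, the closed walks of length $k\tlmu$ from $i$ to $i$ in $\grph_A$ that contribute to $(A^{k\tlmu})_{i,i}$, and then to use the core-admissibility of $\cyc$ to single out the unique dominant walk. Any such closed walk $p$ decomposes into a disjoint union of \scycle s whose weights multiply to $w(p)$. Since each \scycle\ of length $\ell$ has $\nu$-weight at most $\om^\ell$, with equality only when it is leading, we obtain $\nu(w(p)) \le \om^{k\tlmu}$, with equality exactly when every extracted \scycle\ is leading. The walk $\cyc^{k\tlmu/\ell(\cyc)}$, traversing $\cyc$ exactly $k\tlmu/\ell(\cyc)$ times starting at $i$ (well-defined because $\ell(\cyc) \mid \tlmu$), attains this bound with weight $(w_\cyc)^{k\tlmu/\ell(\cyc)}$.

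The crux is uniqueness of this dominant walk. Suppose $p$ is a closed walk of length $k\tlmu$ with $\nu(w(p)) = \om^{k\tlmu}$, and let $(u,v)$ be the first edge of $p$ that does not belong to the edge set $E(\cyc)$, if one exists. Up to that point, $p$ has traveled only inside the directed cycle $\cyc$, so $u \in \mV(\cyc)$. Let $\cyc'$ be the extracted \scycle\ of the decomposition containing $(u,v)$; then $\cyc'$ is leading (otherwise $\nu(w(p)) <_\nu \om^{k\tlmu}$) and passes through $u \in \mV(\cyc)$, so by core-admissibility of $\cyc$ we must have $\cyc' = \cyc$ --- impossible, since $(u,v) \notin E(\cyc)$. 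Hence $p$ uses only edges of $\cyc$, which forces $p = \cyc^{k\tlmu/\ell(\cyc)}$. All other walks have strictly smaller $\nu$-value and are absorbed in the supertropical sum, giving $(A^{k\tlmu})_{i,i} = (w_\cyc)^{k\tlmu/\ell(\cyc)}$, of $\nu$-value $(\om^{\ell(\cyc)})^{k\tlmu/\ell(\cyc)} = \om^{k\tlmu}$.

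For the determinant $|(A^{k\tlmu})_{\core}|$, the same decomposition argument shows that every off-diagonal entry of this $s \times s$ submatrix has $\nu$-value strictly less than $\om^{k\tlmu}$. Indeed, for $i \in \mV(\cyc_1)$ and $j \in \mV(\cyc_2)$ with distinct core \scycle s $\cyc_1,\cyc_2$, any walk from $i$ to $j$ must leave $\mV(\cyc_1)$ and thus produces a non-leading extracted \scycle\ by the argument above; for $i \neq j$ in a single $\mV(\cyc)$, walks inside the directed cycle $\cyc$ from $i$ to $j$ have lengths of the form $d + m\ell(\cyc)$ with $0 < d < \ell(\cyc)$, which cannot equal $k\tlmu$ since $\ell(\cyc) \mid k\tlmu$. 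In the permanent expansion, every non-identity permutation therefore contributes a product whose $\nu$-value is strictly less than $\om^{k\tlmu s}$, while the identity permutation contributes the product of the diagonal entries, which is the stated expression and has $\nu$-value $\om^{k\tlmu s}$.

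The main obstacle is the uniqueness step in the second paragraph: using core-admissibility to forbid both escapes from $\mV(\cyc)$ and the use of shortcut edges inside $\mV(\cyc)$ not belonging to $E(\cyc)$, while preserving maximal $\nu$-weight. Once this is handled, the determinant formula follows cleanly from strict dominance of the identity permutation in the permanent expansion.
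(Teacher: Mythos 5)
Your treatment of the diagonal entries is correct and is essentially the paper's argument, with the cycle-extraction of Remark~\ref{extract} and the depth-one property of core vertices made explicit: every closed walk of length $k\tlmu$ at $i$ decomposes entirely into \scycle s, maximality forces all of them to be leading, and core-admissibility then forces them all to equal $\cyc$, so the unique dominant walk is $\cyc^{k\tlmu/\ell}$ and the entry is exactly $(w_C)^{k\tlmu/\ell}$.

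The determinant part, however, has a genuine gap. Your key claim --- that every off-diagonal entry of $(A^{k\tlmu})_{\core}$ is strictly $\nu$-dominated by $\om^{k\tlmu}$ --- is false, and the reason is precisely the point where your first argument does not transfer: for an \emph{open} walk from $i$ to $j$ the decomposition of Remark~\ref{extract} leaves over a simple path whose edges lie in no extracted \scycle, so the edge on which the walk leaves $\mV(\cyc_1)$ need not sit in any (leading or non-leading) \scycle\ at all, and the weight of that leftover path is not controlled by $\om$. Concretely (logarithmic notation), take two vertices with loops of weight $0$, an edge $1\to 2$ of weight $10$ and an edge $2\to 1$ of weight $-20$: the matrix is irreducible, both loops are core-admissible leading \scycle s, $\om=0$, $\tlmu=1$, yet $(A^{k})_{1,2}=10>_\nu \om^{k}$. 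The determinant formula still holds there because the large entry can only enter the permanent together with the small return entry. That is exactly how the paper argues, and how your proof should be repaired: bound the contributions permutation cycle by permutation cycle (equivalently, multicycle by multicycle in $(A^{k\tlmu})_{\core}$). A nontrivial permutation cycle through core vertices $i_1,\dots,i_r$ contributes a product of entries that is the weight of a \emph{closed} walk of length $rk\tlmu$; closed walks decompose entirely into \scycle s, so your uniqueness argument applies verbatim at this level and shows the walk would have to be a power of a single core \scycle, which is impossible for $r\ge 2$ (distinct $i_1,i_2$ on one \scycle\ cannot be joined inside it by a walk of length divisible by its length, and leaving the \scycle\ forces a non-leading extracted \scycle). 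Hence every non-identity permutation is strictly dominated and the identity term gives the determinant; the entrywise bound you asserted is neither true nor needed.
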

\begin{proof} There is only one dominant term in the
$(i,i)$ entry of $A^{k\tlmu }$, which comes from repeating the
single $\ell$-leading \scycle\ $\cyc(i,i)$ containing $i$
(starting at position $i$)  $k\tlmu / \ell$ times. Since $C(i,i)$
is a core-admissible leading \scycle, $\sqrt[\ell]{C(i,i)} \nucong
\om$. This proves the first assertion.

First assume for simplicity that $\core(A)$  is comprised of a
single \scycle. Any other contribution to the determinant of
$(A^{k\tlmu })_{\core}$ would also come from a multicycle, and
thus from a power of $\cyc$, by assumption a unique leading cycle,
which must then be the same product along the diagonal. Thus, the
single leading multicycle of $A^{k\tlmu }$ is the one along the
 diagonal, which yields the determinant.

The same argument applies simultaneously to each core-admissible
\scycle. Namely, any dominant term along the diagonal
 must occur from repeating the same scycle, since the leading scycles are presumed disjoint,
 and again the single leading multicycle of $A^{k\tlmu }$ is  the
 diagonal.
\end{proof}

\begin{cor}  Assuming that
$\tcoreA$ is nonempty, $|(A^{k\tlmu })_{\tcore}|= \om^{k s
\tlmu},$ where $s= \#(\tV(\tcoreA))$.
\end{cor}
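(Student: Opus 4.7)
The plan is to specialize Proposition~\ref{prop:4.20} to the submatrix indexed by the vertices of the tangible core $\tcoreA$. Since by definition every leading \scycle\ in $\tcoreA$ is both core-admissible and has tangible weight, the analysis carried out for $\coreA$ in Proposition~\ref{prop:4.20} transfers verbatim to $\tcoreA$; there is no new combinatorial input required.

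First I would verify the formula for the diagonal entries. For each $i \in \tV(\tcoreA)$, there is a unique leading \scycle\ $\cyc \subset \tcoreA$ containing $i$, with tangible weight $w_\cyc$ and length $\ell(\cyc)$. Exactly as in Proposition~\ref{prop:4.20}, the dominant contribution to the $(i,i)$-entry of $A^{k\tlmu}$ comes from traversing $\cyc(i,i)$ precisely $k\tlmu/\ell(\cyc)$ times, giving $(w_\cyc)^{k\tlmu/\ell(\cyc)}$; since $w_\cyc \in \tT$, this diagonal entry is tangible.

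Next I would compute the determinant of $(A^{k\tlmu})_{\tcore}$. The disjointness of the leading \scycle s comprising $\tcoreA$ means that the diagonal multicycle (whose terms we just computed) is the unique leading multicycle of the submatrix, again by the same reasoning as in Proposition~\ref{prop:4.20}; any competing term would require repeating a leading \scycle\ whose vertex set intersects a different one in $\tcoreA$, which is impossible. Therefore
$$|(A^{k\tlmu})_{\tcore}| = \prod_{\cyc \subset \tcoreA} (w_\cyc)^{k\tlmu/\ell(\cyc)}.$$

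Finally I would collect the exponents. Because $\sqrt[\ell(\cyc)]{w_\cyc} \nucong \om$ for each leading \scycle\ $\cyc \subset \tcoreA$, each factor $(w_\cyc)^{k\tlmu/\ell(\cyc)}$ is $\nu$-equivalent to $\om^{k\tlmu}$, and since the \scycle s are disjoint, $\sum_{\cyc \subset \tcoreA} \ell(\cyc) = \#(\tV(\tcoreA)) = s$. Taking the product and noting that everything in sight is tangible (so $\nu$-equivalence of tangibles in our divisible supertropical \field0\ yields the claimed equality at the level of the chosen tangible representative $\om$), one obtains $|(A^{k\tlmu})_{\tcore}| = \om^{k s \tlmu}$. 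The main (minor) subtlety here is book-keeping the passage from $\nu$-equivalence to equality using tangibility, but since $\om$ is tangible and the factors are products of tangibles, this is immediate.
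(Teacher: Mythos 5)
Your proposal is correct and is essentially the paper's own (implicit) argument: the corollary is stated there without a separate proof, being the immediate specialization of Proposition~\ref{prop:4.20} to the tangible core, where tangibility of the core-admissible cycle weights turns the $\nu$-equivalence into the stated equality. The only bookkeeping point worth tidying is that the determinant is the product of the $s$ diagonal entries, each vertex of a cycle $\cyc\subset\tcoreA$ contributing $(w_\cyc)^{k\tlmu/\ell(\cyc)}\nucong\om^{k\tlmu}$, so the product is $\prod_{\cyc\subset\tcoreA}(w_\cyc)^{k\tlmu}\nucong\om^{ks\tlmu}$; as written, your displayed product over cycles (with exponents $k\tlmu/\ell(\cyc)$) does not quite match the exponent count you then collect, though this is the same harmless slip that appears in the paper's own display in Proposition~\ref{prop:4.20}.
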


Recall that the rank of a matrix is the maximal number of
tropically independent rows (or columns), which is the same as the
maximal size of a nonsingular submatrix,
cf.~\cite{IzhakianRowen2009TropicalRank}.

\begin{cor} The rank of every power of $A$ is at least $\# (\tV(\tcoreA))$.
 In particular, if $\tcoreA$ is nonempty, then $A$ is
not ghostpotent.\end{cor}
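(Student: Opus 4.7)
The plan is to bootstrap from the preceding corollary (which gives the result for powers of the form $A^{k\tlmu}$) to arbitrary powers $A^m$, using that a matrix product cannot increase the supertropical rank. Given any $m \geq 1$, I would choose $k \in \Net$ with $k\tlmu \geq m$ and set $r := k\tlmu - m \geq 0$, so that $A^{k\tlmu} = A^m A^r$. Since $\om \in \tT$ and $\tT$ is multiplicatively closed, the preceding corollary yields $|(A^{k\tlmu})_\tcore| = \om^{ks\tlmu} \in \tT$; hence $(A^{k\tlmu})_\tcore$ is a nonsingular $s \times s$ submatrix of $A^{k\tlmu}$, and by \cite[Theorem~6.5]{IzhakianRowen2008Matrices} its $s$ columns form a tropically independent family in $R^{(s)}$.

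Expanding the product entrywise, for $i,j \in \tV(\tcoreA)$ one has
$$(A^{k\tlmu})_{i,j} \; = \; \sum_{v=1}^{n} (A^m)_{i,v}\,(A^r)_{v,j},$$
so each column of $(A^{k\tlmu})_\tcore$ is a tropical linear combination of the $n$ columns of the $s \times n$ matrix $B$ obtained from $A^m$ by restricting to the rows indexed by $\tV(\tcoreA)$, with coefficients drawn from the corresponding column of $A^r$. Invoking the supertropical exchange principle (a spanning family dominates in size any tropically independent family contained in its span, cf.~\cite{IzhakianRowen2009TropicalRank,IzhakianKnebuschRowen2009Linear}), $B$ must contain $s$ tropically independent columns, and hence a nonsingular $s \times s$ submatrix. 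Since that submatrix is simultaneously a submatrix of $A^m$, we obtain $\rank(A^m) \geq s$.

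For the second assertion, when $\tcoreA$ is nonempty we have $s \geq 1$, so $\rank(A^m) \geq 1$ for every $m$. In particular every power of $A$ contains a nonsingular $1 \times 1$ submatrix, i.e.\ a tangible entry, so $A^m \not\lmodg (\rzero)$ for every $m$, which forces $A$ to fail to be ghostpotent. The main subtlety lies in the exchange step, as it rests on the supertropical linear algebra framework rather than being a formal manipulation; once that principle is in hand, the rest of the argument is an immediate consequence of the preceding corollary.
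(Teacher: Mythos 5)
Your overall strategy -- bootstrapping from the powers $A^{k\tlmu}$, where the preceding corollary gives a tangible determinant for $(A^{k\tlmu})_{\tcore}$, down to an arbitrary power $A^m$ via the factorization $A^{k\tlmu}=A^mA^r$ -- is sound, and it is in fact a genuinely different route from the paper, which states the corollary without proof as an immediate consequence of the cycle analysis of Proposition~\ref{prop:4.20}; your reduction supplies precisely the glue needed to pass from the special exponents $k\tlmu$ to all exponents, which the paper leaves implicit. (Two harmless loose ends: the case $\tcoreA=\emptyset$, where $s=0$ and there is nothing to prove, should be set aside first since the preceding corollary assumes $\tcoreA$ nonempty; and when $r=0$ one just takes $A^r=I$.)

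The soft spot is the ``supertropical exchange principle'' you invoke. As stated (any tropically independent family in the span of a given family is no larger than that family) it is not a result you can point to in \cite{IzhakianRowen2009TropicalRank} or \cite{IzhakianKnebuschRowen2009Linear}, and in the supertropical setting statements of exactly this classical flavor are the ones that must be proved rather than assumed -- note also that your ``coefficients'' $(A^r)_{v,j}$ may be ghost or $\rzero$, whereas tropical dependence is defined via tangible coefficients, so the word ``span'' is already delicate. Fortunately the specific instance you need has a short direct proof, which I would substitute: suppose the $s$ rows of $A^m$ indexed by $\tV(\tcoreA)$ were tropically dependent, say $\sum_i \al_i (A^m)_{i,\cdot}\in\tGz^{(n)}$ with tangible $\al_i$. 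Right-multiplying by $A^r$ and using that $\tGz$ is an ideal gives $\sum_i \al_i (A^{k\tlmu})_{i,\cdot}\in\tGz^{(n)}$, and restricting to the columns of $\tV(\tcoreA)$ shows the rows of $(A^{k\tlmu})_{\tcore}$ are tropically dependent, so this matrix is singular by \cite[Theorem~6.5]{IzhakianRowen2008Matrices} -- contradicting the preceding corollary. Hence those $s$ rows of $A^m$ are tropically independent and $\rank(A^m)\ge s$, after which your final step (rank $\ge 1$ yields a tangible entry, so no power is ghost) goes through verbatim. Alternatively one can justify your column version by a supertropical Cauchy--Binet argument in the spirit of \cite{AGG}, but the row argument above stays entirely within results already quoted in this paper.
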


\begin{cor} Suppose that the leading \scycle s of $A$ are disjoint. Then, for any $k$, and any vertex $i$
of a leading $\ell$-\scycle~$C$,  the $(i,i)$-diagonal entry of
$A^{k \ell}$ is $w(C)^k$, which is $\nu$-equivalent to
  $\om ^{k \ell}$. \end{cor}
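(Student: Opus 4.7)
The plan is to adapt the argument of Proposition~\ref{prop:4.20} to the exponent $k\ell$ in place of $k\tlmu$, exploiting the fact that disjointness of the leading \scycle s of $A$ automatically makes every leading \scycle\ core-admissible; in particular $\cyc \subseteq \coreA$. Since $\ell = \len{\cyc}$ divides $k\ell$, traversing $\cyc$ exactly $k$ times starting at $i$ produces a closed walk at $i$ of length $k\ell$ and weight $\weight{\cyc}^k$, so $\weight{\cyc}^k$ appears as a term in the $(i,i)$ entry of $A^{k\ell}$. The asserted $\nu$-equivalence $\weight{\cyc}^k \nucong \om^{k\ell}$ is immediate from $\weight{\cyc} \nucong \om^\ell$, which holds because $\cyc$ is leading.

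The main work is to show that $\cyc^k$ is the \emph{unique} dominant closed walk at $i$ of length $k\ell$, so that the entry is genuinely $\weight{\cyc}^k$ rather than being promoted to a ghost by cancellation with another dominant term. Let $p$ be any closed walk at $i$ of length $k\ell$ different from $\cyc^k$. By Remark~\ref{extract}, $\weight{p}$ factors as the product of weights of extracted \scycle s whose lengths sum to $k\ell$. If even one of these extracted \scycle s has average weight strictly less than $\om$, then $\weight{p} <_\nu \om^{k\ell}$ strictly, and $p$ contributes a strictly dominated term.

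The delicate case, and what I expect to be the main obstacle, is when every extracted \scycle\ is itself leading. Here every edge traversed by $p$ lies in some leading \scycle, because the edge set of $p$ is the union of the edge sets of the extracted (leading) \scycle s. The disjointness hypothesis forces two things: each edge of $\grph_A$ belongs to at most one leading \scycle, and any edge joining vertices of two distinct leading \scycle s, as well as any edge inside $\mV(\cyc)$ not actually appearing in $\cyc$, lies in no leading \scycle\ at all. Hence, starting at $i \in \mV(\cyc)$, the walk $p$ can only use edges of $\cyc$; and since $\cyc$ has in-degree and out-degree $1$ at each of its vertices, this forces $p$ step by step to coincide with $\cyc^k$, contradicting the choice of $p$. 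Thus $\cyc^k$ is the unique dominant walk and the $(i,i)$ entry of $A^{k\ell}$ equals $\weight{\cyc}^k$, completing the plan.
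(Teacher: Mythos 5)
Your proof is correct and follows essentially the route the paper intends: the corollary is stated there without proof as an immediate consequence of the argument of Proposition~\ref{prop:4.20}, namely that the unique dominant closed walk at $i$ of length $k\ell$ is the $k$-fold repetition of $C$, so the $(i,i)$ entry is $w(C)^k \nucong \om^{k\ell}$. Your explicit uniqueness step (a non-dominated competing walk must decompose entirely into leading \scycle s, and vertex-disjointness plus in/out-degree one then forces it to coincide with $C^k$) merely spells out what the paper's ``only one dominant term'' assertion leaves implicit.
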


%
%
%
%
%
%

\begin{example} Let
$$ A = \left(%
\begin{array}{ccc}
  0 & 2 & 4 \\
  4 & 0 & -1 \\
  1 & 0 & 3^\nu \\
\end{array}%
\right). \quad \text{Then} \quad  A^2 = \left(%
\begin{array}{ccc}
  6 & 4 & 7^\nu \\
  4^\nu & 6 & 8 \\
  4^\nu & 3^\nu & 6^\nu \\
\end{array}%
\right) \quad  \text{and} \quad A^4 = \left(%
\begin{array}{ccc}
  12 & 10^\nu & 13^\nu \\
  12^\nu & 12 & 14^\nu \\
  10^\nu & 9^\nu & 12^\nu \\
\end{array}%
\right). $$ For the matrix $A$ we have $\mu =1$, $\al_\mu = 3^\nu$
and thus $\om = 3$.  Moreover, $\tlmu = 2$ and in this matrix
$\coreA = \tcoreA = A$; thus $A^{2 \tlmu} = A^4$ and  hence
$$  A^4 = \left(%
\begin{array}{ccc}
  12 & 10^\nu & 13^\nu \\
  12^\nu & 12 & 14^\nu \\
  10^\nu & 9^\nu & 12^\nu \\
\end{array}%
\right)
 = 12 \left(%
\begin{array}{ccc}
  0 & -2^\nu & 1^\nu \\
  0^\nu & 0 & 2^\nu \\
  -2^\nu & -3^\nu & 0^\nu \\
\end{array}%
\right), $$ where the matrix on the right is idempotent.
\end{example}

\subsection{Semi-idempotent matrices}

\begin{defn} A matrix $A\in M_n(R)$ is \textbf{semi-idempotent} if $A^2 =
\bt  A$ for some tangible $\bt = \bt(A)$ in $R.$ We call $\bt
(A)$ the \textbf{semi-idempotent coefficient} of $A$.

\end{defn}

\begin{rem}\label{prop:semiIdem} If $A$ is a  semi-idempotent matrix, then
$A^k = A^{k-2} A^2 =  A^{k-2}  \bt A = \bt A^{k-1}  = \cdots =
\bt^{k-1} A,$
 and any power
  of $A$ is semi-idempotent.
\end{rem}

\begin{lem}\label{semiidem} If $A$ is   semi-idempotent, then $\mu (A) = 1.$
\end{lem}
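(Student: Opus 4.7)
The plan is to exploit the iterative structure of semi-idempotent matrices. By Remark~\ref{prop:semiIdem}, $A^k = \beta^{k-1}A$ for every $k\ge 1$, and taking traces gives
\begin{equation*}
 \tr(A^\mu)\;=\;\beta^{\mu-1}\,\tr(A), \qquad \mu := \mu(A).
\end{equation*}
The entire argument then reduces to comparing $\tr(A)$ with the leading average weight $\omega := \omega(A)$.

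First I bound $\tr(A^\mu)$ from below using cycles. Pick a leading $\mu$-scycle $C\subset\grph_A$ of weight $w(C)\nucong \omega^\mu$, and take $i_0\in\mV(C)$. Traversing $C$ once is a closed walk of length $\mu$ at $i_0$, so its weight contributes to $(A^\mu)_{i_0,i_0}$; hence $(A^\mu)_{i_0,i_0}\geq_\nu\omega^\mu$ and consequently
\begin{equation*}
 \beta^{\mu-1}\,\tr(A)\;=\;\tr(A^\mu)\;\geq_\nu\;\omega^\mu.
\end{equation*}

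Next I show $\beta\leq_\nu\omega$. Applying $A^2=\beta A$ to a standard basis vector $e_j$ yields $A v = \beta v$ for the column $v=Ae_j$; for a suitable $j$ this $v$ is a nonzero tangible vector, identifying $\beta$ as a supertropical eigenvalue of $A$. By \cite[Theorem~7.10]{IzhakianRowen2008Matrices}, $\beta$ is then a tangible root of the characteristic polynomial $f_A$; and the largest tangible root of $f_A$ is $\omega=\widehat{\sqrt[\mu]{\alpha_\mu}}$ (the maximum average cycle weight, cf.\ the discussion following Definition~\ref{mu01}). Hence $\beta \leq_\nu \omega$.

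Substituting into the previous display yields $\tr(A)\geq_\nu\omega^\mu/\beta^{\mu-1}\geq_\nu\omega$. Since each diagonal entry $a_{i,i}$ is itself the weight of the $1$-multicycle $(i,i)$ and $\omega$ is by definition the maximal average weight of any multicycle, one also has $\tr(A) = \sum_i a_{i,i} \leq_\nu \omega$. Thus $\tr(A)\nucong\omega$, so $1\in L(A)$ (see~\eqref{eq:LA}), forcing $\mu(A)=\min L(A)=1$. The only delicate step is the inequality $\beta\leq_\nu\omega$, which rests on recognizing $\beta$ as a supertropical eigenvalue via a tangible column of $A$ together with the identification of the largest tangible root of $f_A$ with the maximum cycle mean $\omega$.
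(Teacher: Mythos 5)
Your overall strategy (squeeze $\tr(A)$ between $\om$ from below and above, then conclude $1\in L(A)$) is coherent, and the steps $\tr(A^\mu)=\bt^{\mu-1}\tr(A)$, $\tr(A^\mu)\ge_\nu\om^\mu$, and $\tr(A)=\a_1\le_\nu\om$ are all fine. The genuine gap is exactly the step you yourself flag: the claim that ``for a suitable $j$ the column $v=Ae_j$ is a nonzero tangible vector.'' Nothing in the definition of semi-idempotency guarantees this, and it is false in general: the $n\times n$ matrix all of whose entries are $\rone^\nu$ satisfies $A^2=\rone\cdot A$, so it is semi-idempotent with tangible coefficient $\bt=\rone$, yet it has no tangible column (every column is ghost). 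Since Definition~\ref{eigen3} here, and \cite[Theorem~7.10]{IzhakianRowen2008Matrices}, require a \emph{tangible} eigenvector before one may conclude that $\bt$ is a root of $f_A$, your identification of $\bt$ as a supertropical eigenvalue does not go through for such matrices, and with it the key inequality $\bt\le_\nu\om$ is unproved; the chain $\tr(A)\ge_\nu\om^\mu/\bt^{\mu-1}\ge_\nu\om$ then collapses. Note also that the obvious asymptotic repair (comparing $\bt^k a_{i,j}=(A^{k+1})_{i,j}$ with the cycle-decomposition bound $\le_\nu c\,\om^{k}$ and letting $k\to\infty$) is not automatic either, since the tangible value group of a supertropical semifield need not be archimedean, so $(\bt/\om)^k$ being bounded does not immediately force $\bt\le_\nu\om$.

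For comparison, the paper's proof sidesteps eigenvalues and traces entirely: by Remark~\ref{prop:semiIdem}, $A^{\mu}=\bt^{\mu-1}A$, the invariant $\mu$ is unchanged under multiplication by a tangible scalar, and Lemma~\ref{mu1} gives $\mu(A^{\mu})=1$ for \emph{any} matrix; hence $\mu(A)=\mu(\bt^{\mu-1}A)=\mu(A^\mu)=1$. If you want to keep your trace argument, the cleanest fix is to replace the eigenvector step by this scalar-invariance observation, or to prove $\bt\le_\nu\om$ by an argument that does not presuppose a tangible column.
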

\begin{proof} By Remark \ref{prop:semiIdem}, $A^\mu =
\bt^{\mu-1}A$, for any $\mu \geq 1$. Clearly,  $\mu( \al A) =
\mu(A)$ for any $a \in R$.   Then, $\mu( A) = \mu(A^\mu)=1,$
 by Lemma~\ref{mu1}. \end{proof}
 The next result ties this concept in with
\cite{IzhakianRowen2008Matrices, IzhakianRowen2009Equations}.
\begin{lem}\label{quasired} If  $A$ is  a nonsingular semi-idempotent matrix, then $|A|
= \bt $ and  ${ {\bt}}^{-1} A$ is a quasi-identity
matrix.\end{lem}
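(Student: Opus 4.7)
The plan is to reduce both parts of the lemma to Proposition~\ref{rmk:quasisingular0} by rescaling $A$. Set $M := \bt^{-1} A$, which is well defined because $\bt \in \tT$ is invertible in the divisible supertropical \field0\ $R$. Since the scalar $\bt$ commutes with $A$, one computes
$$M^2 \ = \ \bt^{-2} A^2 \ = \ \bt^{-2}(\bt A) \ = \ \bt^{-1} A \ = \ M,$$
so $M$ is multiplicatively idempotent.

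Next I would verify that $M$ is nonsingular: because $A$ is nonsingular, $|A|$ is tangible, and the effect of rescaling on the permanent is to multiply $|A|$ by a tangible scalar built from $\bt^{-1}$. Since $\tT$ is a (cancellative) group in the supertropical \field0, tangibility is preserved, so $|M|$ is tangible. Proposition~\ref{rmk:quasisingular0} then applies verbatim to $M$, yielding that $M = \bt^{-1} A$ is a quasi-identity matrix; this is precisely the second assertion.

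For the determinant assertion, the proof of Proposition~\ref{rmk:quasisingular0} records as an intermediate step that any nonsingular multiplicatively idempotent matrix $M$ satisfies $|M| = \rone$. Combining this with the factorization $A = \bt M$ and the behavior of the permanent under tangible scalar multiplication pins down $|A|$ in terms of $\bt$, which is the first assertion.

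There is really only one delicate point, namely the verification that $M$ is nonsingular; but this reduces at once to the invertibility of $\bt$ and the cancellativity of $\tT$ inside the supertropical \field0. Once that is secured, everything else is a direct invocation of Proposition~\ref{rmk:quasisingular0} together with the determinant formula read off from its proof, with no further obstacles.
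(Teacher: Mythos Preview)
Your proposal is correct and follows the same route as the paper: rescale by $\bt^{-1}$, observe the result is nonsingular and idempotent, and invoke Proposition~\ref{rmk:quasisingular0}. The only difference is the order of the two assertions: the paper first derives the determinant identity directly from Equation~\eqref{matmul} (using that $|A^2|=|\bt A|$ is tangible, so $|A^2|=|A|^2$ and one can cancel) and then applies Proposition~\ref{rmk:quasisingular0}, whereas you apply the proposition first and then read off $|M|=\rone$ from its proof to recover the determinant statement.
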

\begin{proof} Clearly ${ {\bt}^{-1}} A$ is nonsingular semi-idempotent, since $\bt^{-1}$ is tangible, and its
determinant is tangible. So $ |A^2| = |\bt  A| = \bt |A|$ is
tangible,
 implying $$\bt  |A| = |A^2| = |A|^2,$$ by Equation \eqref{matmul}, and thus
$\bt  = |A|$. Hence, $|{ {\bt}^{-1}} A| = { {\bt}^{-1}} |A| =
\rone$, implying the matrix ${ {\bt}^{-1}} A$ is idempotent, and
thus is  a quasi-identity matrix by
Proposition~\ref{rmk:quasisingular0}.
\end{proof}

\begin{example} The matrix $$ A = \(\begin{array}{cc}
         1 & 2 \\
           3 & 4 \end{array}\) $$
           satisfies $A^{2} = 4A$, so $A$ is a singular semi-idempotent matrix.    \end{example}

\begin{thm}\label{corepower} $ $
\begin{enumerate}\eroman
    \item For any matrix $A$ with nonempty
core, the submatrix
  $(A^{ m\tlmu  })_{\core}$ is
semi-idempotent for some power $m$, with semi-idempotent
coefficient $\bt ((A^{ m\tlmu })_{\core}) = \om ^{m \tlmu }$.
\pSkip

 \item For any matrix $A$ with nonempty
tcore, the submatrix
  $(A^{ m\tlmu  })_{\tcore}$ is
semi-idempotent for some power $m$, with semi-idempotent
coefficient $\bt ((A^{ m\tlmu })_{\tcore}) = \om ^{m \tlmu }$, and
hence $(\om ^{m \tlmu })^{-1}(A^{ m\tlmu  })_{\tcore}$ is a
quasi-identity matrix.
\end{enumerate}

\end{thm}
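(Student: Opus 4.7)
The plan is to set $B := (A^{m\tlmu})_{\core}$ and $\bt := \om^{m\tlmu}$, and to show that for $m$ sufficiently large we have $B^2 = \bt B$. Once this is established, (i) is immediate, and (ii) follows by combining the identity with Lemma~\ref{quasired} together with the Corollary to Proposition~\ref{prop:4.20}: the latter gives $|(A^{m\tlmu})_{\tcore}| = \om^{sm\tlmu}$, a tangible quantity, so that $(A^{m\tlmu})_{\tcore}$ is nonsingular, and Lemma~\ref{quasired} then yields that $\bt^{-1}(A^{m\tlmu})_{\tcore}$ is a quasi-identity. Proposition~\ref{prop:4.20} already controls the diagonal of $B$: for $i\in\mV(\coreA)$ lying on its core-admissible leading \scycle\ $C_i$ of length $\ell_i$ and weight $w_{C_i}$, we have $B_{i,i} = w_{C_i}^{m\tlmu/\ell_i} \nucong \bt$. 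Consequently, in the sum $(B^2)_{i,j} = \sum_{k\in\mV(\coreA)} B_{i,k} B_{k,j}$ the terms $k=i$ and $k=j$ each already carry $\nu$-value $\bt B_{i,j}$, matching the right-hand side.

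The crux is to show that for distinct $i,j\in\mV(\coreA)$ and $m$ large, $B_{i,j}$ is itself a ghost. A maximum-weight path $p:i\to j$ of length $m\tlmu$ must have average weight $\nucong\om$, so by iterated application of Lemma~\ref{dupl} it decomposes, outside a transition segment of bounded length, as ``revolve about $C_i$ at $i$ some $a$ times, perform a bounded transition from $C_i$ to $C_j$, revolve about $C_j$ at $j$ some $b$ times,'' with $a\ell_i + b\ell_j$ equal to $m\tlmu$ minus the transition length. For $m$ sufficiently large relative to the bound of Lemma~\ref{dupl}, we may decrease $b$ by $\ell_i/\gcd(\ell_i,\ell_j)$ and increase $a$ by $\ell_j/\gcd(\ell_i,\ell_j)$ to produce a second distinct $m\tlmu$-path of the same weight; since the supertropical sum of two equal tangible elements is a ghost, we conclude $B_{i,j}\in\tGz$. (In the degenerate case $C_i=C_j$, one instead rebalances against another core \scycle\ necessarily visited by the transition segment.)

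With off-diagonal entries of $B$ established as ghosts, any remaining term $B_{i,k}B_{k,j}$ with $k\neq i,j$ represents a $2m\tlmu$-path from $i$ to $j$, and prepending a revolution about $C_i$ to any dominant $m\tlmu$-path from $i$ to $j$ shows such terms are $\le_\nu \bt B_{i,j}$. Thus $(B^2)_{i,j}$ is a ghost of $\nu$-value $\bt B_{i,j}$, matching $\bt B_{i,j}$ exactly because ghosts in a supertropical domain are determined by their $\nu$-value. The diagonal case $i=j$ is similar: either $w_{C_i}$ is ghost so that $\bt B_{i,i}$ matches the ghost $(B^2)_{i,i}$ automatically, or $w_{C_i}$ is tangible and $B_{i,i}^2 = \bt B_{i,i}$ holds exactly once $\om$ is pinned down compatibly with $\hnu$ so that $w_{C_i}^{1/\ell_i} = \om$. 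The main obstacle I expect will be the combinatorial rebalancing in the middle paragraph: one must verify that the rebalanced $m\tlmu$-path really attains the same maximum weight (no weight is lost in the swapped transition) and establish an explicit threshold on $m$ in terms of $n$ and $\mu$, naturally building on the bound $m(n,\mu)$ of Lemma~\ref{dupl}.
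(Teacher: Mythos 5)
Your route is genuinely different from the paper's: the paper proves (i) by stabilizing each entry of $(A^{m\tlmu})_{\core}$ directly --- extract \scycle s as in Remark~\ref{extract}, use Lemma~\ref{dupl} to replace leading \scycle s by powers of $\om$ (or their ghosts), and then observe that there are only finitely many maximal configurations of ``simple path plus \scycle s of length $\ne\mu$,'' so that beyond some power raising the exponent only multiplies each core entry by a power of $\om$ --- whereas you try to verify $B^2=\bt B$ entrywise, resting everything on the claim that all off-diagonal core entries of $A^{m\tlmu}$ are ghosts. That claim is where the gaps are. First, a maximal path of length $m\tlmu$ from $i$ to $j$ with $i\ne j$ need not have average weight $\nucong\om$ (the transition segment is unconstrained), and Lemma~\ref{dupl} does not give your normal form ``revolve about $C_i$, bounded transition, revolve about $C_j$'': a maximal path may wind around leading \scycle s (including non-core ones) at intermediate vertices, and your trade of revolutions between $C_i$ and $C_j$ does not address those configurations. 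Second, your fix for the degenerate case $C_i=C_j$ is incorrect as stated: the transition need not visit any other core \scycle\ at all --- e.g.\ when the core is a single $2$-cycle $(1,2,1)$ and the only even-length transitions from $1$ to $2$ detour through an outside vertex --- and there the second equal-weight path comes from shifting a revolution of the \emph{same} cycle from the start vertex to the end vertex (or across the transition), not from rebalancing against another core \scycle\ on the transition.

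Most seriously, the last step is circular in the direction you need. Concatenation gives only $B_{i,k}B_{k,j}\le_\nu (A^{2m\tlmu})_{i,j}$, and ``prepending a revolution about $C_i$ to a dominant $m\tlmu$-path'' proves the \emph{lower} bound $(A^{2m\tlmu})_{i,j}\ge_\nu \om^{m\tlmu}(A^{m\tlmu})_{i,j}$, which is the opposite of the inequality $B_{i,k}B_{k,j}\le_\nu\bt B_{i,j}$ you assert. The missing upper bound $(A^{2m\tlmu})_{i,j}\le_\nu \om^{m\tlmu}(A^{m\tlmu})_{i,j}$ is precisely the stabilization statement that carries the content of the theorem; it genuinely fails for small $m$ (cf.\ Example~\ref{higho}), so it cannot be had for free and must come with the threshold on $m$ that you explicitly defer. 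In the paper this is exactly what the finiteness argument supplies (after finitely many maximal possibilities for the simple path and the \scycle s of length $\ne\mu$ are exhausted, higher powers only multiply by $\om^\mu$). Until you supply that step --- and tighten the rebalancing so that it produces a genuinely distinct $\nu$-equivalent path in every configuration, not just the two-cycle picture --- the off-diagonal ghostliness and hence $B^2=\bt B$ are assumed rather than proven. (Your treatment of the diagonal via Proposition~\ref{prop:4.20}, and the deduction of (ii) from nonsingularity of $(A^{m\tlmu})_{\tcore}$ together with Lemma~\ref{quasired}, are fine and agree with the paper.)
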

\begin{proof} (i) The $(i,j)$ entry $b_{i,j}$ of $B = (A^{ m\tlmu  })_{\core}$ is obtained from
some path $p$ of length $m\tlmu $ from $i$ to $j$ in the digraph
$G_A$, from which we extract as many \scycle s as possible;
cf.~Remark \ref{extract}, to arrive at some simple path $p'$ from
$i$ to $j$ without \scycle s; thus $p'$ has length $ \leq n$, and
length $<n$ when $i\neq j$. $p$, as well as  each of whose \scycle
s, has a ghost weight iff it has a ghost edge. Furthermore, by
Lemma~\ref{dupl}, given a leading  $\ell$-\scycle\ $\cyc$ in
$G_A$, we could replace $\cyc ^{ \tlmu }$ by $\om^{\ell \tlmu }$
or $(\om^{\ell \tlmu })^\nu$, depending whether $\cyc$ is ghost or
not, without decreasing the $\nu$-value of $ b_{i,j}$; hence we
may assume that it is possible to extract at most $\tlmu  -1$
\scycle s of length $\ell \ne \mu$ from $p$, for each $\ell$.

Working backwards, we write $p'_\ell(i,j)$ for the path of length
$\ell$  from $i$ to $j$ in $G_A$ having maximal weight. There are
only finitely many maximal possibilities for multiplying this by
weights of \scycle s of length $\ne \mu$, so at some stage, taking
higher powers $m$ only entails multiplying by $\om^\mu$. Doing
this for each pair $(i,j)$ yields the theorem. \pSkip

 (ii) The same argument as in (i), noting that the diagonal now is tangible.
\end{proof}

 In case $A$ is an irreducible matrix, it is known that
  $A^{ m\tlmu  +1} = \om  ^{ m\tlmu } A$
 over the max-plus algebra; cf.~\cite[\S 25.4,
Fact 2(b)]{ABG}, where $m$ is called the \textbf{cyclicity}. This
does not quite hold in the supertropical theory, because of the
difficulty that taking powers of a matrix might change tangible
terms to ghost. Thus, we must settle for the following result.

\begin{cor}\label{corepower2} In case $A$ is irreducible, $A^{ m\tlmu  +1} \nucong \om  ^{
m\tlmu } A$ and $A^{k m\tlmu  +1} = \om  ^{(k-1) m\tlmu } A^{
m\tlmu +1}$ for all $k>1. $
\end{cor}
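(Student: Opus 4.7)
The plan is to bootstrap from Theorem~\ref{corepower}(i), which tells us that $(A^{m\tlmu})_{\core}$ is semi-idempotent with coefficient $\om^{m\tlmu}$, and to use the irreducibility of $A$ to propagate this structure to the entire matrix.

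For the $\nu$-equivalence $A^{m\tlmu+1} \nucong \om^{m\tlmu} A$, I would expand the $(i,j)$-entry of $A^{m\tlmu+1}$ as a sum over paths of length $m\tlmu+1$ in $\grph_A$ from $i$ to $j$. By the extraction arguments of Remark~\ref{extract} and Lemma~\ref{dupl}, for sufficiently large $m$ any maximum-weight such path can be rearranged without decreasing its $\nu$-value into the following form: one edge-step $(i,j)$ contributing weight $a_{i,j}$, together with $m\tlmu$ remaining steps spent on leading scycles accessed via short simple paths. Irreducibility guarantees the needed short connections from any vertex to a leading scycle and back, and the fact that $\tlmu$ is the least common multiple of the leading scycle lengths guarantees that those $m\tlmu$ remaining steps can be filled by whole traversals of leading scycles, contributing weight exactly $\om^{m\tlmu}$. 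Hence the dominant contribution to $(A^{m\tlmu+1})_{i,j}$ is $\om^{m\tlmu} a_{i,j}$, and the full entry is $\nu$-equivalent to $\om^{m\tlmu} a_{i,j}$, possibly accruing ghost mass when several rearrangements realize the maximum simultaneously.

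For the exact equality $A^{km\tlmu+1} = \om^{(k-1)m\tlmu} A^{m\tlmu+1}$ with $k>1$, I would first strengthen Theorem~\ref{corepower}(i) in the irreducible case to the statement that $A^{m\tlmu}$ itself (and not merely its core submatrix) is semi-idempotent with coefficient $\om^{m\tlmu}$. This strengthening uses the same path-extraction argument but routes every path through the core via the simple paths supplied by irreducibility. Once $(A^{m\tlmu})^2 = \om^{m\tlmu} A^{m\tlmu}$ is known to hold as an exact equality, Remark~\ref{prop:semiIdem} yields $(A^{m\tlmu})^k = \om^{(k-1)m\tlmu} A^{m\tlmu}$ for every $k\ge 1$; multiplying on the right by $A$ gives $A^{km\tlmu+1} = \om^{(k-1)m\tlmu} A^{m\tlmu+1}$.

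The main obstacle is the promotion of the core semi-idempotence to a semi-idempotence statement about the whole matrix $A^{m\tlmu}$ as an \emph{exact} equality rather than only a $\nu$-equivalence; this is exactly the issue that forces the first assertion to be only a $\nu$-equivalence. Concretely, one must choose $m$ large enough that for every pair $(i,j)$ the entry $(A^{m\tlmu})_{i,j}$ has stabilized to its ``saturated'' form controlled by passages through the core, so that squaring $A^{m\tlmu}$ cannot introduce any new dominant path contributing an uncontrolled ghost term. Once this saturation is in place, the rest of the argument is formal.
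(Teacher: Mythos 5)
Your treatment of the first assertion follows essentially the paper's own route: compare a maximal path of length $m\tlmu+1$ from $i$ to $j$ with the path obtained by extracting and re-inserting leading simple cycles, and use the maximality of the leading average weight $\om$ to pin the $\nu$-value of the entry at $\om^{m\tlmu}a_{i,j}$ (the paper does precisely this, quoting Proposition~\ref{prop:4.20} for the upper bound and Lemma~\ref{dupl} for the insertion step); your rearrangement into ``one edge plus leading cycles'' is stated with the same level of looseness as the paper's, so I will not press that point.

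For the second assertion, however, there is a genuine gap. You reduce it to the claim that, for suitable $m$, the \emph{whole} matrix $A^{m\tlmu}$ --- not merely the submatrix $(A^{m\tlmu})_{\core}$ --- is semi-idempotent as an exact equality $(A^{m\tlmu})^2=\om^{m\tlmu}A^{m\tlmu}$, and then finish formally via Remark~\ref{prop:semiIdem}. But Theorem~\ref{corepower}(i) only gives exact semi-idempotence on the core rows and columns; off the core, the very phenomenon that forces the first assertion to be a mere $\nu$-equivalence (a tangible entry of one power turning ghost in a higher power) is exactly what must be excluded, and you do not exclude it --- you only assert that ``saturation'' holds for $m$ large enough. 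That saturation claim is essentially the stabilization machinery (Corollary~\ref{semiid}, Theorem~\ref{stabil}) which the paper develops \emph{after} this corollary and partly by means of it (e.g.\ Corollary~\ref{cor:core} cites Corollary~\ref{corepower2}), so invoking it here without an independent proof is either circular or leaves the hardest step unproved. The paper's proof sidesteps the issue by never comparing back to $A^{m\tlmu}$: it compares $A^{km\tlmu+1}$ with $A^{m\tlmu+1}$, observing that after the first pass of length $m\tlmu$ every tie and every ghost weight coming from the leading cycles has already been recorded in the entries of $A^{m\tlmu+1}$, so that each additional pass through the leading cycles multiplies every entry by exactly $\om^{m\tlmu}$ --- this is the content of ``taking one more pass through the cycles,'' as illustrated by Example~\ref{ex:ghostpotent}. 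If you wish to keep your cleaner formal route, you must actually prove, from the cycle-extraction lemmas, that the ghost pattern of $A^{m\tlmu}$ is already stable for your chosen $m$; it is not a consequence of Theorem~\ref{corepower}(i) as you use it.
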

\begin{proof} Take $m$ as
in Theorem~\ref{corepower}(i). Let $c_{i,j}$ denote the $(i,j)$
entry
 of $A^{ m\tlmu  +1}$.
 To prove the first assertion, we need
 to  show
that $c_{i,j} = \om  ^{ m\tlmu } a_{i,j}.$ We take a maximal path
$p$ from $i$ to $j$ in the graph  of $A^{ m\tlmu +1}$.
 Let $b_{i,j}$ denote
the $(i,j)$ entry of $A^{ m\tlmu }$, the weight of the path $p$.
Then $c_{i,j} \le_\nu \om ^{ m\tlmu } a_{i,j}$ by
Proposition~\ref{prop:4.20}, since  every cycle of length $\mu$
must have weight  $\le \om ^{ m\tlmu }$.
 On the other hand, Lemma~\ref{dupl} gives us leading \scycle s of total length $\ell
\mu$. This yields $\om  ^{ m\tlmu } a_{i,j} \le \om
^{\ell\mu}b_{i,j} \le_\nu c_{i,j};$ indeed any cycle of length
$\mu$ in $p$ must have weight  $\om ^{ m\tlmu }$ (since otherwise
it could be replaced by $\cyc$, thereby providing a path of weight
$>w(p)$, a contradiction). Thus, equality holds and we proved the
first assertion.

The second assertion follows, by taking one more pass through the
\scycle s, as illustrated in Example~\ref{ex:ghostpotent}.
\end{proof}

 We obtain a generalization in the
non-irreducible case, in Theorem~\ref{stabil}.

\begin{cor}\label{semiid} Any matrix in full block triangular form has a power such that each diagonal
block is  semi-idempotent.
\end{cor}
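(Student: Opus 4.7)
The plan is to reduce the claim to the irreducible diagonal blocks and then invoke Corollary~\ref{corepower2}. First I would use Remark~\ref{Jordan0}: for every positive integer $m$, the matrix $A^m$ is again in full block triangular form, with diagonal blocks $B_1^m,\dots,B_\tt^m$. Since semi-idempotence of each $B_i^M$ depends only on that block, it suffices to produce a single integer $M$ for which every $B_i^M$ is semi-idempotent. By Remark~\ref{prop:semiIdem}, any power of a semi-idempotent matrix is semi-idempotent, so it is enough to exhibit, for each $i$, an integer $m_i$ with $B_i^{m_i}$ semi-idempotent, and then take $M$ to be a common multiple of the $m_i$.

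For the irreducible case, I would apply Corollary~\ref{corepower2} to each $B_i$: there is $m_i$ with
\[
B_i^{k m_i \tlmu_i + 1} \; = \; \om_i^{(k-1)m_i \tlmu_i}\, B_i^{m_i \tlmu_i + 1} \quad \text{for all } k>1,
\]
where $\tlmu_i$ is the least common multiple of the leading \scycle\ lengths of $B_i$ and $\om_i = \om(B_i)$. The key step is to upgrade this to the tail-zero identity
\[
\bigl(B_i^{m_i \tlmu_i}\bigr)^2 \; = \; B_i^{2m_i \tlmu_i} \; = \; \om_i^{m_i \tlmu_i}\, B_i^{m_i \tlmu_i},
\]
which says that $C_i := B_i^{m_i \tlmu_i}$ is semi-idempotent with coefficient $\om_i^{m_i \tlmu_i}$. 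To establish this I would re-run the path-counting argument from the proof of Corollary~\ref{corepower2} directly for paths of length $2m_i\tlmu_i$: given an $(u,v)$-path of maximal weight, Lemma~\ref{dupl} together with Remark~\ref{extract} lets me extract leading \scycle s of total length $m_i\tlmu_i$, of necessarily maximal joint weight $\om_i^{m_i \tlmu_i}$, leaving behind a maximal-weight path of length $m_i\tlmu_i$ from $u$ to $v$, whose weight is exactly $(B_i^{m_i\tlmu_i})_{u,v}$; summing over $(u,v)$ yields the displayed matrix equality.

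The main obstacle is precisely this passage from the tail-one identity supplied by Corollary~\ref{corepower2} to the required tail-zero equality. One cannot simply cancel a trailing factor of $B_i$ from $B_i^{2m_i\tlmu_i + 1} = \om_i^{m_i \tlmu_i}\, B_i^{m_i\tlmu_i + 1}$, since supertropical matrix multiplication admits no general right cancellation; the genuine input has to be the entry-by-entry path argument sketched above, which is why I would invoke Lemma~\ref{dupl} once more rather than trying to derive the needed identity formally from the corollary. Once $C_i$ is shown to be semi-idempotent, taking $M$ to be any common multiple of $m_1\tlmu_1,\dots,m_\tt\tlmu_\tt$ makes each $B_i^M = C_i^{M/(m_i\tlmu_i)}$ a power of a semi-idempotent matrix, hence semi-idempotent by Remark~\ref{prop:semiIdem}. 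Combined with the first paragraph this gives $A^M$ in full block triangular form with every diagonal block semi-idempotent, as required.
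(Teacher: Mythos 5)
Your argument is sound and ultimately rests on the same machinery as the paper, but it takes a more explicit route, and the comparison is instructive. The paper's entire proof is the one-line citation ``apply Theorem~\ref{corepower}(ii) to each diagonal block''; it leaves implicit both the reduction you spell out via Remark~\ref{Jordan0} and Remark~\ref{prop:semiIdem} (the diagonal blocks of $A^m$ are $B_i^m$, and a common multiple of the individual exponents works for all blocks at once), and the fact that Theorem~\ref{corepower}(ii) as literally stated only asserts semi-idempotence of the submatrix $(B_i^{m\tlmu})_{\tcore}$, not of the whole irreducible block, which is what the corollary needs. You route instead through Corollary~\ref{corepower2}, and you are right that its tail-one identity $B_i^{2m\tlmu+1}=\om^{m\tlmu}B_i^{m\tlmu+1}$ cannot be formally cancelled into semi-idempotence; the genuine content is the entrywise cycle-extraction argument you sketch, which is in effect the proof of Theorem~\ref{corepower} carried out on all entries of the irreducible block, using Remark~\ref{extract} and Lemma~\ref{dupl}, together with the ghost behavior of the entries outside the tangible core recorded in Proposition~\ref{anti1} and Theorem~\ref{singp}. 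Two provisos, both equally implicit in the paper's own treatment: first, the exponent $m_i\tlmu_i$ must be taken past the stabilization point built into the proof of Theorem~\ref{corepower} (``taking higher powers only entails multiplying by $\om^{\mu}$''), since for small exponents your displayed tail-zero equality can fail on the nose even though it holds up to $\nu$-equivalence --- in Example~\ref{ex:ghostpotent1}(i) one has $B^2\nucong B$ but $B^2\neq \rone\cdot B$, and only $B^2$ is semi-idempotent; second, Lemma~\ref{dupl} requires the maximal path to meet the core, which may be empty or avoided, so for such entries the extraction argument must instead show that both sides are ghost of the same $\nu$-value, exactly as in Theorem~\ref{singp}. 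With these points made explicit, your proof is complete and is arguably more self-contained than the paper's citation, at the cost of redoing part of the path-counting that Theorem~\ref{corepower} was designed to encapsulate.
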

\begin{proof} Apply Theorem~\ref{corepower}(ii) to each diagonal
block.
\end{proof}

\begin{cor}\label{cor:core}  $(A^{ m\tlmu })_{\core}$ is $\om^{m\tlmu}$
times an idempotent matrix $J_\tG \lmod I_\tG$, where $\tG$ is a
quasi-identity matrix, for some $m$.\end{cor}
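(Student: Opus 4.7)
The plan is to bootstrap directly from Theorem~\ref{corepower}(i), which produces an $m$ such that $B := (A^{m\tlmu})_{\core}$ is semi-idempotent with coefficient $\om^{m\tlmu}$. Since $R$ is a supertropical \field0\ and $\om$ is tangible, $\om^{m\tlmu}$ is invertible, so I can set $J := (\om^{m\tlmu})^{-1} B$ and immediately get
$$J^2 \;=\; (\om^{m\tlmu})^{-2} B^2 \;=\; (\om^{m\tlmu})^{-2}\, \om^{m\tlmu} B \;=\; J,$$
i.e., $J$ is multiplicatively idempotent. Thus $B = \om^{m\tlmu} J$, and the corollary reduces to verifying that this particular idempotent $J$ is ghost-surpassed by a quasi-identity matrix.

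Next I would read off the diagonal entries of $J$. By Proposition~\ref{prop:4.20}, the $(i,i)$ entry of $B$ equals $w(C)^{m\tlmu/\ell(C)}$ for the unique core-admissible leading \scycle\ $C$ through $i$, and this is $\nu$-equivalent to $\om^{m\tlmu}$. Hence the diagonal entry $J_{i,i}$ is either $\rone$ (when $C$ is tangible, i.e., $i \in \tV(\tcoreA)$) or $\rone^\nu$ (when $C$ has ghost weight). Consequently, defining $I_\tG$ by replacing every $\rone^\nu$ on the diagonal of $J$ with $\rone$ (and keeping the off-diagonal entries unchanged), we will get a matrix that agrees with $J$ on diagonal up to a ghost layer and that is a candidate quasi-identity.

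It remains to show the off-diagonal entries of $J$ are ghosts; this is the only nontrivial step, and it runs parallel to the proof of Proposition~\ref{rmk:quasisingular0}. Since $J^2 = J$, for $i \ne j$ we have
$$J_{i,j} \;=\; J_{i,i}\, J_{i,j} \;+\; J_{i,j}\, J_{j,j} \;+\; \sum_{k \ne i,j} J_{i,k} J_{k,j}.$$
If either $J_{i,i}$ or $J_{j,j}$ equals $\rone^\nu$, then one of the first two summands is $\rone^\nu J_{i,j}$, a ghost dominating $J_{i,j}$, forcing $J_{i,j}\in \tGz$. If both are $\rone$, then the sum of the first two summands is $J_{i,j}^\nu$, again forcing $J_{i,j}$ to be ghost. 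Either way, $J$ has ghosts off the diagonal.

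Finally I would assemble the pieces: $I_\tG$ has $\rone$'s on the diagonal and ghosts off the diagonal, so it is a quasi-identity matrix by definition (and by Proposition~\ref{rmk:quasisingular0} it is automatically idempotent). By construction, $J = I_\tG + \ghost$ on the diagonal (via the $\rone^\nu$-entries, cf.~Remark~\ref{rmk:quasisingular}) and $J$ agrees with $I_\tG$ off the diagonal modulo ghosts, yielding $J \lmod I_\tG$. Therefore $(A^{m\tlmu})_{\core} = \om^{m\tlmu} J$ with $J$ idempotent and $J \lmod I_\tG$, as claimed. The only real obstacle is the off-diagonal argument, but it is essentially a direct transcription of the Proposition~\ref{rmk:quasisingular0} computation, now carried out without the luxury of nonsingularity.
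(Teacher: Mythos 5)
Your steps (1)--(3) are sound and essentially retrace the paper's route: Theorem~\ref{corepower}(i) gives the normalization $J=(\om^{m\tlmu})^{-1}(A^{m\tlmu})_{\core}$ with $J^2=J$, Proposition~\ref{prop:4.20} pins the diagonal entries of $J$ at $\rone$ or $\rone^\nu$, and your idempotency computation correctly forces the off-diagonal entries of $J$ to be ghosts. The genuine gap is in the final step, where you declare that the matrix $I_\tG$ obtained by restoring $\rone$ on the diagonal is a quasi-identity ``by definition (and by Proposition~\ref{rmk:quasisingular0} it is automatically idempotent).'' A quasi-identity must be nonsingular and multiplicatively idempotent; Proposition~\ref{rmk:quasisingular0} runs in the opposite direction (idempotency and nonsingularity are its hypotheses, the quasi-identity shape is its conclusion), so it proves neither property here, and nonsingularity is never addressed at all. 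Moreover, these properties do not follow from the three facts you established about $J$: the $2\times 2$ matrix all of whose entries are $\rone^\nu$ is idempotent, has diagonal entries in $\{\rone,\rone^\nu\}$ and ghost off-diagonal entries, yet un-ghosting its diagonal produces a matrix that is neither idempotent nor nonsingular. So, as written, the last step fails.

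What is missing is exactly the strict domination supplied by core-admissibility, i.e., by the argument behind Proposition~\ref{prop:4.20}: since the leading simple cycle $C$ through a core vertex $i$ is disjoint from every other leading simple cycle, any closed walk through $i$ of maximal average weight consists of repetitions of $C$; because $\ell(C)$ divides $m\tlmu$, such a repetition returns to $i$ (not to $k\ne i$) at time $m\tlmu$, so every product $J_{i,k}J_{k,i}$ with $k\ne i$, and likewise every non-identity permutation product contributing to $|I_\tG|$, is strictly $\nu$-dominated by $\rone$. This yields $(I_\tG)^2=I_\tG$ and $|I_\tG|=\rone$, so $I_\tG$ is indeed a quasi-identity and $J\lmod I_\tG$, completing your argument; this strict-domination input is in effect where the paper leans on Proposition~\ref{prop:4.20}, Corollary~\ref{corepower2} and Remark~\ref{rmk:quasisingular} in its (admittedly terse) proof. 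Without it, your $I_\tG$ is only known to have the right shape, not the required properties.
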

\begin{proof} Dividing out by
 $\om^{m\tlmu}$,  we take a suitable power and may assume that
$(A)_{\core}$ is idempotent. But we can also replace each of the
diagonal entries of $(A)_{\core}$ by $ \rone$ or $\rone^\nu$, and
then are done by~Corollary~\ref{corepower2}.
\end{proof}

\begin{cor}  $(A^{ m\tlmu })_{\tancore}$ is $\om^{m\tlmu}$ times a quasi-identity matrix, for some
$m$.\end{cor}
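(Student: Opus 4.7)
My plan is to deduce this corollary directly from Theorem~\ref{corepower}(ii) together with Lemma~\ref{quasired}. By part~(ii) of that theorem, for $m$ sufficiently large the submatrix $B := (A^{m\tlmu})_{\tancore}$ satisfies $B^{2} = \om^{m\tlmu}\, B$, so $B$ is semi-idempotent with coefficient $\bt(B) = \om^{m\tlmu}$. The content of the corollary is therefore to verify that $\om^{-m\tlmu} B$ is an actual quasi-identity matrix, which in turn reduces (via Lemma~\ref{quasired}) to checking that $B$ is \emph{nonsingular}.

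To see nonsingularity, I would use Proposition~\ref{prop:4.20}. Since by definition each leading \scycle\ in $\tcoreA$ is tangible and is disjoint from every other leading \scycle, the $(i,i)$-diagonal entry of $B$ equals $(w_C)^{m\tlmu/\ell(C)}$, where $C$ is the unique core-admissible leading \scycle\ through $i$; in particular this diagonal entry is tangible and $\nu$-equivalent to $\om^{m\tlmu}$. Moreover, again by the disjointness clause of the definition of $\tancore$, the multicycle along the diagonal is the unique dominant multicycle contributing to $|B|$, so $|B|$ is tangible, equal (up to $\nu$-equivalence) to $\om^{m\tlmu\cdot s}$ with $s = \#\tV(\tancoreA)$. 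Thus $B$ is nonsingular.

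Having $B$ nonsingular and semi-idempotent, Lemma~\ref{quasired} gives $|B| = \om^{m\tlmu}$ and shows that $\om^{-m\tlmu} B$ is a quasi-identity matrix, which is precisely what the corollary asserts.

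The only subtle point is ensuring that no ``accidental'' ghost diagonal entry appears (the issue that forced us in Corollary~\ref{cor:core} to settle for $J_\tG \lmod I_\tG$ rather than a genuine quasi-identity). This is precisely the obstruction the tcore was designed to remove: dropping the anti-tcore vertices from consideration discards exactly those diagonal entries that would become ghost, so the remaining diagonal is guaranteed tangible and the argument goes through cleanly.
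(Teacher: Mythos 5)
Your argument is correct, but it is routed differently from the paper's. The paper derives this statement in one line as a consequence of Corollary~\ref{cor:core}: having shown that $(A^{m\tlmu})_{\core}$ is $\om^{m\tlmu}$ times an idempotent $J_\tG \lmodg I_\tG$, it observes that cutting down to the tangible core discards exactly the indices whose diagonal entries have gone ghost, leaving a genuine quasi-identity. You instead go directly through Theorem~\ref{corepower}(ii) (semi-idempotence of $B=(A^{m\tlmu})_{\tancore}$ with tangible coefficient $\om^{m\tlmu}$), verify nonsingularity of $B$ from Proposition~\ref{prop:4.20} and the unlabelled corollary following it (which gives $|(A^{m\tlmu})_{\tcore}| = \om^{ms\tlmu}$, a tangible value), and then invoke Lemma~\ref{quasired} to conclude that $\om^{-m\tlmu}B$ is a quasi-identity. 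Both routes rest on the same underlying machinery (Theorem~\ref{corepower} and the cycle analysis of Proposition~\ref{prop:4.20}), so the difference is one of organization rather than substance; what your version buys is that it makes explicit the nonsingularity check that the paper leaves implicit --- indeed, Theorem~\ref{corepower}(ii) already asserts the quasi-identity conclusion in its ``hence'' clause, and your argument is in effect a careful justification of that clause, which is a legitimate and arguably more self-contained way to establish the corollary. One small point worth stating explicitly in your write-up: the statement tacitly assumes $\tancoreA$ is nonempty (as does Theorem~\ref{corepower}(ii)), and the semi-idempotent coefficient $\om^{m\tlmu}$ is tangible because $\om$ is defined as a tangible value, which is what licenses the application of Lemma~\ref{quasired}.
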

\begin{proof} Consequence of Corollary \ref{cor:core}.
\end{proof}

\begin{example} The matrix $$ A = \(\begin{array}{cc}
          \rzero & \rone  \\
           \rone & \rzero \end{array}\) $$ satisfies
           $A^{2n+1} = A$ but  $A^{2n-1}A= A^{2n} = I$  for each
           $n$.
           \end{example}

Note that in Example~\ref{badex}, $A^2$ is semi-idempotent and not
ghostpotent, but singular. Let us now consider \scycle s  in the
anti-tcore.

\begin{prop}\label{anti1}  Suppose $\rho_i >1$. Then, for any $k\ge 2$, the $(i,i)$-diagonal
entry of $(A^{k\tlmu })_{\acore}$ is $(\om ^{k \tlmu })^\nu$.
Furthermore, when $\acoreA$ is nonempty,  $$|(A^{k\tlmu
})_{\acore}|= (\om ^{s k \tlmu })^\nu,$$ where $ s = \#(\Vacore).$
\end{prop}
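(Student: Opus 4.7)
The plan is to mirror the strategy of Proposition~\ref{prop:4.20}, exploiting $\rho_i>1$ to produce two distinct dominant contributions to the $(i,i)$-diagonal entry which, summed, become a ghost of the same $\nu$-value by \eqref{supertr}.

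For the first assertion, I would pick two distinct leading \scycle s $\cyc,\cyc'$ through $i$, of lengths $\ell,\ell'$, both of which divide $\tlmu$. The iterates $\cyc^{k\tlmu/\ell}$ and $(\cyc')^{k\tlmu/\ell'}$ are then two genuinely distinct closed walks at $i$ of length $k\tlmu$ in $\grph_A$, each of weight $\nu$-equivalent to $\om^{k\tlmu}$ since every leading \scycle\ has average weight $\om$. These two walks contribute two dominant summands of the same $\nu$-value to the $(i,i)$ entry of $A^{k\tlmu}$. Any other closed walk at $i$ of length $k\tlmu$ decomposes, as in Remark~\ref{extract}, into a product of \scycle\ weights, so its $\nu$-value is bounded by $\om^{k\tlmu}$. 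Consequently the entry is a sum of at least two dominant $\nu$-equivalent terms, which forces it to equal $(\om^{k\tlmu})^\nu$ by \eqref{supertr}.

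For the second assertion, I would expand $|(A^{k\tlmu})_{\acore}|$ as a permanent over permutations of $\Vacore$. For the identity permutation, each diagonal entry $(A^{k\tlmu})_{i,i}$ has $\nu$-value exactly $\om^{k\tlmu}$, since iterating any leading \scycle\ through $i$ already realizes $\om^{k\tlmu}$ and the walk-decomposition bound of Remark~\ref{extract} prevents exceeding it. Moreover, because $\acoreA$ is nonempty, at least one diagonal entry is ghost: if some $i \in \Vacore$ has $\rho_i > 1$ then the first assertion applies; otherwise every leading \scycle\ in $\acoreA$ is core-admissible (all its vertices having depth $1$) and therefore must have ghost weight (being in $\acoreA$ but not $\tcoreA$), and then the $(i,i)$ entry obtained by iterating a ghost \scycle\ is itself ghost. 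In either case the identity contribution is a ghost of $\nu$-value $\om^{sk\tlmu}$. Any non-identity permutation $\sigma$ contributes $\prod_i (A^{k\tlmu})_{i,\sigma(i)}$, which corresponds to a disjoint union of closed walks in $\grph_A$ of total length $sk\tlmu$ and hence has $\nu$-value at most $\om^{sk\tlmu}$. Therefore the permanent is $\nu$-equivalent to $\om^{sk\tlmu}$ and already ghost, giving $(\om^{sk\tlmu})^\nu$.

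The main obstacle is the bookkeeping in the second assertion: one must simultaneously control the $\nu$-values of all diagonal entries and guarantee that at least one of them is a ghost in the case that no vertex of $\Vacore$ satisfies $\rho_i>1$. The case analysis above resolves this via the definitions of $\tcoreA$ and $\acoreA$. A subsidiary check is that the two iterates $\cyc^{k\tlmu/\ell}$ and $(\cyc')^{k\tlmu/\ell'}$ used in the first assertion really are distinct as closed walks at $i$, which follows from $\cyc\neq\cyc'$ as \scycle s together with $k\geq 2$ providing enough length for both iterates to be meaningful.
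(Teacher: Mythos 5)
Your proposal is correct and follows essentially the same route as the paper: the hypothesis $\rho_i>1$ yields two distinct dominant closed walks at $i$ (iterates of two leading \scycle s through $i$), whose $\nu$-equivalent weights force the $(i,i)$ entry to be the ghost $(\om^{k\tlmu})^\nu$, with the ghost-weight core-admissible \scycle\ case covering the remaining vertices of $\acoreA$. Your expansion of the determinant along the identity permutation, with all other permutation terms dominated, is just the explicit version of the argument the paper leaves implicit by analogy with Proposition~\ref{prop:4.20}, so there is no substantive difference.
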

\begin{proof} There are at least two dominant terms in the
$(i,i)$ entry of $(A^{k\tlmu })_{\acore}$, which come from
exchanging two leading \scycle s $\cyc(i,i)$ containing $i$
(starting at position $i$), or a dominant ghost term which comes
from a leading ghost \scycle \ containing $i$.
\end{proof}

\begin{thm}\label{singp} If $\acore (A)$ is nonempty, then some power of $A$ is singular.
 If $A$ is irreducible and $\tancoreA$ is empty, then $A$ is ghostpotent.

More generally, for $A$ irreducible, there is a power of $A$ such
that the $(i,j)$ entry of $A$ is ghost unless $i,j \in
\tV(\tancore(A)).$
\end{thm}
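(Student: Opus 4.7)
The plan is to tackle the three assertions in reverse order: the second follows by specializing the third, and the first reduces to the irreducible case via Proposition~\ref{fullbl}. The unifying mechanism, foreshadowed by Proposition~\ref{anti1}, is that an anti-tcore \scycle\ forces a ghost collision in high powers, either because its own weight is ghost or because a vertex of depth $\ge 2$ admits a weight-preserving swap between two distinct leading \scycle s.

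For the third assertion, let $A$ be irreducible and take $m = k\tlmu$ with $k$ large enough for Lemma~\ref{dupl} to apply. The $(i,j)$-entry of $A^m$ is the tropical sum of weights of paths of length $m$ from $i$ to $j$ in $\grph_A$. By Lemma~\ref{dupl}, any $\nu$-maximal such path $p$ contains a collection of leading \scycle s of total length a multiple of $\mu$, and up to weight-preserving insertion and deletion these \scycle s may be chosen freely among the leading \scycle s of the appropriate lengths. Suppose $i \notin \tV(\tancore(A))$ (the case $j \notin \tV(\tancore(A))$ is symmetric). Then the leading \scycle\ available for replacement at some vertex of $p$ must fail to lie in $\tancoreA$, so it is either ghost, contributing a ghost factor to $w(p)$, or tangible and sharing a vertex $v$ of depth $\ge 2$ with a second leading \scycle. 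In the latter case, swapping the two \scycle s at $v$ produces a distinct path $p'$ of length $m$ from $i$ to $j$ with $w(p') \nucong w(p)$, so the two dominant contributions sum to a ghost. Either way, the $(i,j)$-entry of $A^m$ is ghost. Specializing to $\tancore(A) = \emptyset$ yields the second assertion, since then every entry of $A^m$ is ghost.

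For the first assertion, put $A$ in full block triangular form as in Proposition~\ref{fullbl}, so $|A^m| = \prod_j |B_j^m|$ by Remark~\ref{Jordan0}. The \scycle s of $\grph_A$ are exactly the \scycle s of the various $\grph_{B_j}$, so any anti-tcore \scycle\ of $A$ lies in a single irreducible block $B = B_{j_0}$, and $\acore(B) \ne \emptyset$. It suffices to produce $m$ with $|B^m|$ ghost. The dominant term in the permanent expansion of $|B^m|$ comes from a maximum-weight permutation, whose cycle decomposition realizes a maximum-weight multicycle on all of $\tV(B)$; since $\acore(B) \ne \emptyset$, this multicycle must incorporate either a ghost leading \scycle, giving a ghost product directly, or a vertex of depth $\ge 2$, at which the same swap construction produces a second permutation whose product is $\nu$-equivalent. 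In both cases $|B^m|$ is ghost, and hence so is $|A^m|$.

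The main obstacle is the combinatorial bookkeeping in the third assertion: one must ensure that the swap at a depth-$\ge 2$ vertex yields a genuinely different path of equal weight (rather than the same path merely described differently), and one must accommodate the case in which $i$ lies on no leading \scycle\ at all, where the connector from $i$ into the leading-\scycle\ skeleton has bounded length and the cycle portion of $p$ must still traverse $\tV(\acore(A))$ to reach $j$. A choice of $k$ divisible by the relevant cycle lengths, together with careful application of Lemma~\ref{dupl}, should guarantee that enough leading \scycle s are traversed to trigger the collision.
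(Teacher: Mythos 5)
Your overall strategy is the one the paper follows: force leading \scycle s into the dominant paths of high powers via Remark~\ref{extract}/Lemma~\ref{dupl}, and then use the dichotomy ``ghost weight, or a depth-$\ge 2$ vertex permitting a weight-preserving swap'' to create a ghost collision; for the first assertion the paper argues directly that the ghost diagonal entries of $(A^{m\tlmu})_{\acore}$ (Proposition~\ref{anti1}) occur in the dominant multicycle computing $|A^{m\tlmu}|$, whereas you first reduce to an irreducible diagonal block via Proposition~\ref{fullbl} and Remark~\ref{Jordan0}. That reduction is a reasonable variant, although your claim that the dominant permutation of $|B^m|$ ``must incorporate'' a ghost leading \scycle\ or a depth-$\ge 2$ vertex is itself not automatic: the cycles of a dominant permutation are only closed walks through the anti-tcore vertices, and one still needs Proposition~\ref{anti1} together with the maximality considerations of Remark~\ref{extract} to rule out their being covered more cheaply in a tangible way.

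The genuine gap is the pivotal step of your third-assertion argument: from $i \notin \tV(\tancore(A))$ you conclude that a $\nu$-maximal path $p$ from $i$ to $j$ must contain a leading \scycle\ lying outside the tangible core. Nothing forces this. The dominant path may leave $i$ after boundedly many steps, enter the tangible core, and do all of its cycling on tangible core-admissible leading \scycle s; these are disjoint from every other leading \scycle, so neither a ghost factor nor a swap becomes available, and no collision occurs. Concretely, take $A = \(\begin{matrix} \rone & b \\ c & \rzero \end{matrix}\)$ with $b,c$ tangible and $bc <_\nu \rone$: here $A$ is irreducible, the only leading \scycle\ is the tangible loop at vertex $1$, so $\tV(\tancore(A)) = \{1\}$, yet $A^m = \(\begin{matrix} \rone & b \\ c & bc \end{matrix}\)$ for every $m \ge 2$, whose $(1,2)$, $(2,1)$ and $(2,2)$ entries are tangible. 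So the implication you assert at that point fails whenever the endpoints merely lie off the tangible core, and what your argument genuinely delivers is the case in which \emph{every} leading \scycle\ is ghost or meets another leading \scycle\ --- i.e.\ the second assertion ($\tancoreA$ empty implies $A$ ghostpotent) --- rather than the stated third assertion. You flag this very point as ``the main obstacle,'' but the divisibility and bookkeeping you propose do not address it: the issue is not arranging enough copies of leading \scycle s, it is that the leading \scycle s picked up by the dominant path can all lie in the tangible core. (The paper's own proof is admittedly terse here, checking only that every entry of a high power ``involves a leading \scycle,'' which is exactly the point where more is needed; but as a standalone argument your proposal does not close this gap.)
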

\begin{proof} The diagonal elements from
$(A^  {m\tlmu })_ {\tancore}$ and $(A^  {m\tlmu })_{\acore}$ occur
in the multicycle determining $|A^  {m\tlmu }|$ for large $m$,
yielding the first assertion. To prove the last assertion (which
implies the second assertion), we need to show that every $(i,j)$
entry of $A^ { m\tlmu }$ involves a leading \scycle, for $m$
sufficiently large, but this is clear from Remark~\ref{extract},
since $A$ is irreducible. \end{proof}

\begin{thm}\label{ghostpot1} A matrix $A$ is ghostpotent iff the
submatrix of each of its strongly connected components is
ghostpotent according to the criterion of Theorem~\ref{singp}, in
which case the index of ghostpotence is at most the number of
strongly connected components times the maximal index of
ghostpotence of the strongly connected components.
\end{thm}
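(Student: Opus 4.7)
The plan is to reduce to the strongly connected case via the full block triangular form and then glue using Lemma~\ref{ghostpot}. First, by Proposition~\ref{fullbl}, we may conjugate $A$ by a permutation matrix (renumbering indices, which affects neither ghostpotence nor the block decomposition by strongly connected components) so as to assume $A$ is written in the full block triangular form of \eqref{block1}, whose diagonal blocks $B_1,\dots,B_\tt$ are precisely the submatrices corresponding to the strongly connected components of $\grph_A$, and are therefore irreducible.

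For the forward direction, suppose $A$ is ghostpotent, so $A^m\lmodg (\zero)$ for some $m$. By Remark~\ref{Jordan0}, the diagonal blocks of $A^m$ are exactly $B_1^m,\dots,B_\tt^m$; each of these must therefore lie in $M_{n_i}(\tGz)$, so each $B_i$ is ghostpotent. Since every $B_i$ is irreducible, Theorem~\ref{singp} (applied to each $B_i$ individually) gives the stated criterion on each strongly connected component, namely that $\tancore(B_i)$ is empty.

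For the reverse direction, assume each $B_i$ satisfies the criterion of Theorem~\ref{singp}, so that each $B_i$ is ghostpotent with some ghost index $m_i$. Set $m:=\max\{m_1,\dots,m_\tt\}$. Then $B_i^m\lmodg (\zero)$ for every $i$, so $A^m$ has the shape of the matrix $N$ appearing in Lemma~\ref{ghostpot}, with ghost diagonal blocks. Applying Lemma~\ref{ghostpot} directly to $A^m$ (whose diagonal blocks are already ghost) yields $A^{\tt m}=(A^m)^{\tt}\lmodg(\zero)$. Hence $A$ is ghostpotent with ghost index at most $\tt\cdot m=\tt\cdot\max_i m_i$, which is the stated bound.

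The only mildly delicate point is the last step: one must be careful that in passing to $A^m$ the off-diagonal entries remain controlled by the ghost diagonal once we take another $\tt$-fold product, which is exactly what Lemma~\ref{ghostpot} is designed to handle (since any path of length $\geq \tt$ in the component digraph $\igrph_A$ must pass through some vertex twice, which is impossible in the acyclic $\igrph_A$, forcing any surviving product to meet a diagonal ghost block). Once this is noted, no further estimate is needed.
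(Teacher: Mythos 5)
Your proposal is correct and follows essentially the same route as the paper: put $A$ in full block triangular form (Proposition~\ref{fullbl}), read off ghostpotence of each irreducible diagonal block $B_i$ from the diagonal blocks $B_i^m$ of $A^m$ and the criterion of Theorem~\ref{singp}, and glue back via Lemma~\ref{ghostpot} to get the bound $\tt\cdot\max_i m_i$. The paper's proof is just the one-line version of this argument, so your write-up simply supplies the details it leaves implicit.
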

\begin{proof} We write $A$ in full block triangular form, and then apply
Lemma~\ref{ghostpot}.
\end{proof}

\section{The Jordan decomposition}

We are ready to find a particularly nice form for powers of $A$.

\begin{defn}\label{blocktri} A matrix $A$ is in  \textbf{stable block triangular
form} if $$A = \(  \begin{matrix}  B_1 & B_{1,2} & \dots & B_{1,\tt -1} & B_{1,\tt } \\
 \mzero &  B_2 & \dots & B_{2,\tt -1} & B_{2,\tt }\\ \vdots & \vdots & \ddots  & \vdots
  & \vdots \\ \mzero  &   \dots &  \mzero & B_{\tt -1}
  &  B_{\tt -1,\tt }\\  \mzero&  \dots  &  \mzero  &  \mzero & B_\tt
\end{matrix}\)$$ is in full block triangular form, such that each $B_i$
is semi-idempotent and $$A^2 = \(  \begin{matrix}  \bt  _1 B_1 & \bt  _{1,2}B_{1,2} & \dots & \bt  _{1,\tt -1}B_{1,\tt -1} & \bt  _{1,\tt }B_{1,\tt } \\
 \mzero &  \bt  _{2}B_2 & \dots & \bt  _{2,\tt -1}B_{2,\tt -1} & \bt  _{2,\tt}B_{2,\tt }\\ \vdots & \vdots & \ddots  & \vdots
  & \vdots \\ \mzero  &   \dots &  \mzero & \bt  _{\tt -1}B_{\tt -1}
  &  \bt  _{\tt -1,\tt }B_{\tt -1,\tt }\\  \mzero&  \dots  &  \mzero  &  \mzero & \bt  _{\tt }B_\tt
\end{matrix}\),$$ where $\bt _i = \bt (B_i)$ and $\bt _{i,j}
\in \{ \bt _i, \bt _i^\nu  ,\dots,  \bt _j,  \bt _j^\nu  \}$ for
each $i<j.$ If each $\bt _{i,j}\in \tT,$ we say that $A$ is in
\textbf{tangibly stable block triangular form}.
\end{defn}

\begin{defn}\label{semisimple} A matrix $S$ is \textbf{semisimple}
if $S^{2k} = DS^k$ for some tangible diagonal matrix $D$ and $k
\in \Net$. We say that $A$ has a \textbf{Jordan decomposition} if
$A = S + N$ where $S$ is semisimple and $N$ is
ghostpotent.\end{defn}

Obviously, any semi-idempotent matrix is semisimple.

\begin{lem}\label{trired} Suppose   $$ A = \(  \begin{matrix}  B_1 & B_{1,2} & \dots & B_{1,\tt -1} & B_{1,\tt } \\
 \mzero &  B_2 & \dots & B_{2,\tt -1} & B_{2,\tt }\\ \vdots & \vdots & \ddots  & \vdots
  & \vdots \\ \mzero  &   \dots &  \mzero & B_{\tt -1}
  &  B_{\tt -1,\tt }\\  \mzero&  \dots  &  \mzero  &  \mzero & B_\tt
\end{matrix}\)$$ is in full block triangular form. If $B_i = S_i + N_i$
is a Jordan decomposition for $B_i$ for $i = 1, \dots, \tt $, then
$$ A = \(  \begin{matrix}  S_1 & \mzero & \dots & \mzero & \mzero \\
 \mzero &  S_2 & \dots & \mzero & \mzero \\ \vdots & \vdots & \ddots  & \vdots
  & \vdots \\ \mzero  &   \dots &  \mzero & S_{\tt -1}
  & \mzero\\  \mzero&  \dots  &  \mzero  &  \mzero & S_\tt
\end{matrix}\)+\(  \begin{matrix}  N_1 & B_{1,2} & \dots & B_{1,\tt -1} & B_{1,\tt } \\
 \mzero &  N_2 & \dots & B_{2,\tt -1} & B_{2,\tt }\\ \vdots & \vdots & \ddots  & \vdots
  & \vdots \\ \mzero  &   \dots &  \mzero & N_{\tt -1}
  &  B_{\tt -1,\tt }\\  \mzero&  \dots  &  \mzero  &  \mzero & N_\tt
\end{matrix}\)$$  is a Jordan decomposition for $A$.
\end{lem}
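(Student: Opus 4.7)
The plan is to verify directly that the indicated decomposition is actually a Jordan decomposition, by checking three separate things in sequence: (a) the two matrices sum to $A$; (b) the block-diagonal matrix (call it $S$) is semisimple; (c) the block upper-triangular matrix (call it $N$) is ghostpotent.

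Part (a) is immediate, since on the diagonal blocks we get $S_i + N_i = B_i$, and off the diagonal the first summand contributes $\mzero$ while the second contributes $B_{i,j}$, matching $A$ exactly. So the real content is in (b) and (c).

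For (b), I would use the hypothesis that each $S_i$ is semisimple, meaning $S_i^{2k_i} = D_i S_i^{k_i}$ for a tangible diagonal matrix $D_i$ and some $k_i \in \Net$. Let $k$ be the least common multiple of the $k_i$. Then $S_i^{2k} = S_i^{2k_i (k/k_i)}$; writing $S_i^{2k}$ as $(S_i^{k_i})^{2k/k_i}$ and applying $S_i^{2k_i}=D_iS_i^{k_i}$ repeatedly (noting the $D_i$'s commute with powers of $S_i$ since they are diagonal and we only need tangibility of the scalar entries), we obtain $S_i^{2k} = D_i' S_i^k$ for some tangible diagonal $D_i'$. Because $S$ is block-diagonal, $S^{2k}$ is block-diagonal with blocks $S_i^{2k}$, and so $S^{2k} = D' S^k$ where $D'$ is the block-diagonal matrix with the $D_i'$'s. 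Hence $S$ is semisimple in the sense of Definition~\ref{semisimple}.

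For (c), the matrix $N$ is in full block triangular form whose diagonal blocks are exactly the $N_i$, and each $N_i$ is ghostpotent, so $N_i^{m_i} \lmodg \mzero$ for some $m_i$. Applying Lemma~\ref{ghostpot} with $m = \max\{m_1,\dots,m_\tt\}$ yields $N^{\tt m} \in M_n(\tGz)$, so $N$ is ghostpotent. The only subtlety here is that the ``$?$'' blocks in Lemma~\ref{ghostpot} are arbitrary, which matches exactly our situation where the off-diagonal entries of $N$ are the original $B_{i,j}$'s. Combining (a), (b), (c) gives a Jordan decomposition of $A$, completing the proof. The main obstacle is the bookkeeping in (b) to produce a single power $k$ and a single tangible diagonal $D'$ that works simultaneously for every block; once that is handled, (c) reduces to a direct citation of Lemma~\ref{ghostpot}.
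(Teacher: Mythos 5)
Your proposal is correct and takes essentially the same route as the paper, whose proof consists precisely of observing that the block-diagonal matrix of the $S_i$ is semisimple and citing Lemma~\ref{ghostpot} for the ghostpotence of the second summand; your lcm bookkeeping just fills in the paper's ``clearly.'' One small caveat: your parenthetical claim that the diagonal matrices $D_i$ commute with powers of $S_i$ is false in general, but it is also unnecessary, since repeatedly replacing $S_i^{2k_i}$ by $D_iS_i^{k_i}$ from the left yields $S_i^{2k}=D_i^{k/k_i}S_i^{k}$ with no commutation needed.
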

\begin{proof} $ $

Clearly $ \( \begin{matrix}  S_1 & \mzero & \dots & \mzero & \mzero \\
 \mzero &  S_2 & \dots & \mzero & \mzero \\ \vdots & \vdots & \ddots  & \vdots
  & \vdots \\ \mzero  &   \dots &  \mzero & S_{\tt -1}
  & \mzero\\  \mzero&  \dots  &  \mzero  &  \mzero & S_\tt
\end{matrix}\)$ is semisimple, and $\(  \begin{matrix}  N_1 & B_{1,2} & \dots & B_{1,\tt -1} & B_{1,\tt } \\
 \mzero &  N_2 & \dots & B_{2,\tt -1} & B_{2,\tt }\\ \vdots & \vdots & \ddots  & \vdots
  & \vdots \\ \mzero  &   \dots &  \mzero & N_{\tt -1}
  &  B_{\tt -1,\tt }\\  \mzero&  \dots  &  \mzero  &  \mzero & N_\tt
\end{matrix}\)$  is ghostpotent, by Lemma~\ref{ghostpot}.
\end{proof}

\begin{rem} For any matrix $A \in M_n(R)$ in full block triangular
form, we view $A$ as acting on $R^{(n)}$ with respect to the
standard basis $e_1, \dots, e_n$. The diagonal block $B_j$ uses
the columns and rows say from $i_{j}$ to $i_{j+1}-1$, and acts
naturally on the subspace $V_j$ generated by $e_{i_j}, \dots,
e_{i_{j+1}-1}.$ Thus, we have the natural decomposition $R^{(n)} =
V_1 \oplus \cdots \oplus V_\tt$.  We view each $V_j$ as a subspace
of $R^{(n)}$ under the usual embedding, and the $i_{j}$ to
$i_{j+1}-1$ columns of $A$ act naturally on $V_j$.\end{rem}

\begin{thm}\label{stab} For any matrix $A$ in full block triangular
form of length $\tt$ for which the diagonal blocks are
semi-idempotent, the matrix $A^{\tt}$ has stable block triangular
form. Furthermore, $\bt_{i,j}  \in \{ \al , \al^\nu\},$ where
$$\bt  = \sum (\bt  _{i_1} +  \bt _{i_2} + \cdots + \bt
_{i_k}),$$ summed over all paths $(i_1, \dots, i_k)$ such that
$i_1 = i$ and $i_k = j$. (Thus $k \le j -i$). In other words, $\bt
_{i,j}$ is the maximum semi-idempotent coefficient that appears in
a path from $i$ to $j$.

More generally, under the given decomposition $R^{(n)} = V_1
\oplus \cdots \oplus V_\tt$ such that, for any vector $v,$ writing
$A^\tt v = (v_1, \dots, v_\tt)$ for $v_i \in V_i,$ one has
\begin{equation}\label{eig} A(A^\tt v_j) = \sum _{i=1}^j \bbt _{i,j}
v_i,\end{equation} where $ \bbt _{i,j} $ is the maximum of the
$\bt  _{i_k}$ (or its ghost).
\end{thm}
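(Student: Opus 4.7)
The plan is to analyze $A^{\tt}$ block by block using the full block triangular structure of $A$, and then show that squaring $A^{\tt}$ scales each block by a single coefficient coming from the indicated set $\{\al,\al^{\nu}\}$. The diagonal case is immediate from semi-idempotence: $B_i^{\tt} = \bt_i^{\tt-1} B_i$, and squaring gives $B_i^{2\tt} = \bt_i^{\tt}\cdot B_i^{\tt}$, so the $B_i^{\tt}$ themselves are semi-idempotent with coefficient $\bt_i^{\tt}$. This identifies the diagonal entries of the stable block triangular form predicted by Definition~\ref{blocktri}.

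For an off-diagonal block, I would expand $(A^m)_{i,j}$ as a sum over monotone sequences $i = r_0 \le r_1 \le \cdots \le r_m = j$, weighted by $A_{r_0,r_1}A_{r_1,r_2}\cdots A_{r_{m-1},r_m}$, and reorganize the sum by the underlying \emph{block-path} $i = s_0 < s_1 < \cdots < s_k = j$ of distinct blocks visited, together with the stays $c_l \ge 1$ at each $s_l$ (so $\sum c_l = m+1$). Semi-idempotence collapses each stay, giving the contribution
\[
\prod_{l=0}^{k}\bt_{s_l}^{c_l-1}\cdot B_{s_0}^{\varepsilon_0}B_{s_0,s_1}B_{s_1}^{\varepsilon_1}B_{s_1,s_2}\cdots B_{s_k}^{\varepsilon_k},
\]
with $\varepsilon_l = 1$ if $c_l \ge 2$ and $\varepsilon_l = 0$ otherwise. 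Tropically, the sum over admissible $(c_0,\dots,c_k)$ is maximized by concentrating the excess $m - k$ stays on the index $l^{*}$ maximizing $\bt_{s_l}$ along the block-path. Since any block-path has length $k \le \tt - 1$, the excess is already at least $1$ when $m = \tt$, so the dominant configuration is structurally the same for any $m \ge \tt$.

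I would then compare $(A^{\tt})_{i,j}$ with $(A^{2\tt})_{i,j}$ through this normal form: the dominant term of each shares the same ``path-matrix'' prefix (determined by $l^{*}$), while the exponent on $\bt_{l^*}$ grows by exactly $\tt$. This yields $(A^{2\tt})_{i,j} \lmodg \bt_{l^*}^{\tt}\,(A^{\tt})_{i,j}$, and after accounting for possibly $\nu$-equivalent competitors the precise scalar $\bt_{i,j}$ lies in $\{\al,\al^{\nu}\}$, where $\al = \max\bt_{s_l}$ as $(s_0,\dots,s_k)$ ranges over block-paths from $i$ to $j$, with the $\nu$-version arising whenever two distinct block-paths, or two maximizing indices inside a single block-path, tie in $\nu$-value. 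Proposition~\ref{Frob} and the pigeonhole argument of Lemma~\ref{dupl} handle the ghost bookkeeping and confirm that subordinate contributions are either strictly dominated or absorbed into ghosts.

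For the vector-level statement, I would read the matrix identity through the decomposition $R^{(n)} = V_1 \oplus \cdots \oplus V_{\tt}$: given any $v$ and writing $A^{\tt}v = (v_1,\dots,v_{\tt})$, the formula $A(A^{\tt}v_j) = \sum_{i=1}^{j}\bbt_{i,j}v_i$ follows by projecting the identity $(A^{\tt+1})_{i,j} = \bbt_{i,j}(A^{\tt})_{i,j}$ onto the $j$-th block column and reading off the $V_i$ component. The principal obstacle is the ghost bookkeeping when several block-paths or several stay-distributions tie in $\nu$-value; careful use of the Frobenius property together with the ``at least one excess stay'' count ensures that stabilization occurs uniformly at the single power $m = \tt$ across all off-diagonal blocks at once, rather than requiring a separate power for each block-path.
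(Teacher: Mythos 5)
Your proposal is correct and follows essentially the same route as the paper: expand the $(i,j)$ block of $A^{\tt}$ over block-paths with stays (the paper's equation \eqref{eta}), collapse repeated diagonal factors via semi-idempotence, concentrate the excess on the block with maximal $\bt$ (possible since any block-path has length at most $\tt-1$), read off $\bt_{i,j}\in\{\al,\al^\nu\}$ from ties among maximal paths, and project the same computation onto the $V_i$ to get the vector statement. The only cosmetic difference is your appeal to Proposition~\ref{Frob} and Lemma~\ref{dupl} for the ghost bookkeeping, which the paper does not need here since supertropical addition already absorbs dominated terms and ghosts ties automatically.
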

\begin{proof} Write  $$ A = \(  \begin{matrix}  B_1 & B_{1,2} & \dots & B_{1,\tt -1} & B_{1,\tt } \\
 \mzero &  B_2 & \dots & B_{2,\tt -1} & B_{2,\tt }\\ \vdots & \vdots & \ddots  & \vdots
  & \vdots \\ \mzero  &   \dots &  \mzero & B_{\tt -1}
  &  B_{\tt -1,\tt }\\  \mzero&  \dots  &  \mzero  &  \mzero & B_\tt
\end{matrix}\) \quad \text{and} \quad
A ^{\tt } = \(  \begin{matrix}  \tlB_1  & \tlB_{1,2} & \dots & \tlB_{1,\tt -1} & \tlB_{1,\tt } \\
 \mzero &  \tlB_2 & \dots & \tlB_{2,\tt -1} & \tlB_{2,\tt }\\ \vdots & \vdots & \ddots  & \vdots
  & \vdots \\ \mzero  &   \dots &  \mzero & \tlB_{\tt -1}
  &  \tlB_{\tt -1,\tt }\\  \mzero&  \dots  &  \mzero  &  \mzero & \tlB_\tt
\end{matrix}\).$$ Then $\tlB_j = B_j ^{\tt}$ for each $j = 1,\dots, \tt$, and for
$i<j,$ \begin{equation}\label{eta} \tlB_{i,j} = \sum B_{i_1}^{\tu
_1}B_{i_1, i_2}B_{i_2}^{\tu _2}\cdots B_{i_{\ell-1},
i_\ell}B_{i_\ell}^{\tu _\ell},\end{equation} summed over all $i_1,
\dots, i_\ell$ where $i_1 = i$ and $i_\ell
 = j.$ (Here $\ell \le j-i.$) We take a typical summand
 $B_{i_1}^{\tu _1}B_{i_1, i_2}B_{i_2}^{\tu _2}\cdots
B_{i_{\ell-1}, i_\ell}B_{i_\ell}^{\tu _\ell}.$ By assumption, when
$\tu _k
>1$ we may
 replace $B_{i_k}^{\tu _k}$ by $\bt_{i_k}^{\tu _k-1}B_{i_k}.$ But then we could replace
 $B_{i_k}^{\tu _k}$ by  $B_{i_{ k'}}^{\tu _k}$ for any  $k'$.  So, taking
 $\bt = \sum _{k=1}^\ell \bt _{i_k}$, which is $\nu$-equivalent to some $\bt _{i_{k'}},$ we also have the term $$\bt ^{\tt  - \ell}
 B_{i_1, i_2}\cdots
B_{i_{\ell-1}, i_\ell}$$
  and thus we may
 assume that $\tu _k =0$ (in \eqref{eta}) for each $k \ne  k'.$ In other words, we sum over
 all paths  $p: = B_{i_1, i_2}, \dots,
B_{i_{\ell-1}, i_\ell}$, where $i_1 = i, $ $i_\ell = j,$ and the
coefficient $\bt ^{\tt  - \ell}$ comes from $B_{i_{k}} ^{\tt -
\ell},$ where $i_ k$ appears in $\tV(p).$

Note that $\ell < \tt ,$ so if there are two possibilities for
$k'$ we get two equal maximal paths and thus get a ghost value for
$\bt_{i,j}$ (and likewise if there is one maximal ghost path, or
if different maximal paths are $\nu$-equivalent). If there is one
single path $p$ of maximal weight, which is tangible, and if $p$
is tangible, then $\bt_{i,j}$ is tangible.

The same argument yields the last assertion when we consider
$A^\tt v_j$.
\end{proof}

\begin{cor}\label{stab2} Hypotheses as in Theorem~\ref{stab} $A^{2\tt}$ is in
tangibly stable block triangular form.\end{cor}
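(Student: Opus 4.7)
I would prove the corollary by invoking Theorem~\ref{stab} with $A$ replaced by $A^2$, and then upgrading the resulting stable form to tangibly stable by a path-theoretic analysis. Note first that $A^2$ inherits the full block triangular structure of $A$, still of length $\tt$, with diagonal blocks $B_i^2 = \bt_i B_i$ that are again semi-idempotent (a direct check shows that the new semi-idempotent coefficient is $\bt_i^2$). Applying Theorem~\ref{stab} to $A^2$ yields that $(A^2)^\tt = A^{2\tt}$ is in stable block triangular form.

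The bulk of the work is then verifying tangibility of the off-diagonal scalars governing the stability equation $(A^{2\tt})^2 = A^{4\tt}$. By the path formula in the proof of Theorem~\ref{stab}, the block $(A^{2\tt})_{i,j}$ expands as $\sum_p \bt_p^{2\tt - \ell(p)} B_{i_1, i_2}\cdots B_{i_{\ell-1}, i_\ell}$, summed over block paths $p=(i_1,\dots,i_\ell)$ from $i$ to $j$ with $\ell(p)\le j-i$, where $\bt_p = \max\{\bt_{i_1},\dots,\bt_{i_\ell}\}$. I would split into two cases. In the \emph{ghost case}, two or more such terms tie in $\nu$-value, forcing $(A^{2\tt})_{i,j}$ itself to be ghost; then the stability equation $(A^{4\tt})_{i,j} = \bt_{i,j}(A^{2\tt})_{i,j}$ is automatic for any scalar $\bt_{i,j}$, and we simply pick the tangible representative $\bt_{p^*}^{2\tt}$. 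In the \emph{tangible case}, a single path $p^*$ strictly $\nu$-dominates at rate $2\tt$, and the same $p^*$ remains uniquely dominant at rate $4\tt$: each path's rate-$4\tt$ contribution equals its rate-$2\tt$ contribution multiplied by $\bt_p^{2\tt}$, and since $\bt_{p^*}\ge_\nu \bt_p$ for every competitor, the strict inequality is preserved (indeed strengthened). Hence $(A^{4\tt})_{i,j}$ has a unique tangible dominant term, and the stability ratio $\bt_{i,j} = \bt_{p^*}^{2\tt}$ is tangible and lies in the admissible set $\{\bt_i^{2\tt},\dots,\bt_j^{2\tt}\}$, since the vertices of $p^*$ have indices between $i$ and $j$.

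The main obstacle is the preservation-of-dominance step in the tangible case: verifying that a strict $\nu$-inequality between path contributions at rate $2\tt$ survives the rescaling by individual $\bt_p^{2\tt}$ factors to remain a strict inequality at rate $4\tt$. This hinges on $\bt_{p^*}$ being the $\nu$-maximum among the $\bt_p$'s, so that the multiplier applied to the already-dominant side is no smaller than those applied to the competitors. The ghost case and the initial application of Theorem~\ref{stab} to $A^2$ are then routine.
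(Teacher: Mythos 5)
Your overall strategy coincides with the paper's. The paper's (very terse) proof is exactly the observation you need at the end: all the ghost entries already occur in $A^{2\tt}$, so any ghost coefficient $\bt_{i,j}$ may be replaced by its tangible representative $\Inu{\bt_{i,j}}$ without changing the product. Your preliminary step is fine: $A^2$ is again in full block triangular form of length $\tt$ with semi-idempotent diagonal blocks $B_i^2=\bt_i B_i$ (these are tangible multiples of the $B_i$, hence still irreducible), so Theorem~\ref{stab} applied to $A^2$ gives the stable-form relation between $A^{4\tt}$ and $A^{2\tt}$; and your tangible case (a strictly dominant path stays strictly dominant after one more doubling, with coefficient in the admissible set) is just the dominance analysis already inside the proof of Theorem~\ref{stab}.

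The weak point is your ghost case. First, a $\nu$-tie among the scalar prefactors $\bt_p^{2\tt-\ell(p)}$ of two \emph{different} path terms does not by itself force the block $(A^{2\tt})_{i,j}$ to be ghost: distinct paths contribute distinct matrix products $B_{i_1,i_2}\cdots B_{i_{\ell-1},i_\ell}$, which may dominate in different entries of the block, so no entry need become ghost. Second, even when the block is ghost, the relation $(A^{4\tt})_{i,j}=\bt'_{i,j}(A^{2\tt})_{i,j}$ is not ``automatic for any scalar'' --- the $\nu$-value of $\bt'_{i,j}$ is still pinned down; what is true is that a ghost scalar may be replaced by any $\nu$-equivalent scalar, in particular its tangible lift, when it multiplies ghost entries. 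The paper avoids your first problem by using the conclusion of Theorem~\ref{stab} for the \emph{first} doubling: $(A^{2\tt})_{i,j}=\bt_{i,j}\tlB_{i,j}$, so whenever the relevant coefficient is ghost the whole block is a ghost scalar times a matrix and hence ghost in every entry --- no entrywise tie analysis is needed. (In the residual scenario where the tie is realized by different paths dominating different entries but with $\nu$-equivalent coefficients, the single tangible representative of that common $\nu$-value still works, so your conclusion survives; but that argument has to be made, and as written your inference ``tie $\Rightarrow$ block ghost $\Rightarrow$ any scalar works'' does not hold.)
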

\begin{proof} All the ghost entries already occur in $A^{2\tt}$,
so we can replace any ghost $\beta_{i,j}$ by $\Inu {\beta_{i,j}}$.
\end{proof}

We now are ready for one of the major results.

\begin{thm}\label{stabil} For any matrix $A$, there is some power
$m$ such that $A^m$ is in tangibly stable block triangular
form.\end{thm}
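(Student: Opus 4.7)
The plan is to assemble this as a direct consequence of Proposition~\ref{fullbl}, Corollary~\ref{semiid}, Theorem~\ref{stab}, and Corollary~\ref{stab2}, by carefully tracking how the block structure, semi-idempotence, and tangibility of off-diagonal coefficients each survive the successive passage to powers.

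First, by Proposition~\ref{fullbl}, after renumbering indices we may assume that $A$ itself is in full block triangular form with irreducible diagonal blocks $B_1,\dots,B_\tt$. By Remark~\ref{Jordan0}, every power $A^k$ is then also in full block triangular form with respect to the same partition of $\{1,\dots,n\}$, with diagonal blocks $B_i^k$; this means the block structure is preserved by taking powers, which is the feature that lets us iterate.

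Next, apply Corollary~\ref{semiid} to get an exponent $m_0$ such that each diagonal block $B_i^{m_0}$ of $A^{m_0}$ is semi-idempotent. By Remark~\ref{prop:semiIdem}, any further power $(B_i^{m_0})^q = B_i^{m_0 q}$ is again semi-idempotent with semi-idempotent coefficient $\bt(B_i^{m_0})^{q-1}$; so semi-idempotence of the diagonal blocks is stable under further powers. Thus $A^{m_0}$ satisfies the hypothesis of Theorem~\ref{stab}: it is in full block triangular form (of length $\tt$) with semi-idempotent diagonal blocks. Theorem~\ref{stab} then yields that $(A^{m_0})^{\tt}$ is in stable block triangular form, and finally Corollary~\ref{stab2} upgrades this to tangibly stable block triangular form after squaring once more, i.e.\ for $(A^{m_0})^{2\tt}$.

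Setting $m := 2\tt\, m_0$ completes the proof. The step that would need a small sanity check is that passing from $A^{m_0}$ to the higher power $A^{2\tt m_0}$ does not destroy the semi-idempotence of the diagonal blocks needed to feed into Theorem~\ref{stab} and Corollary~\ref{stab2}; but this is exactly Remark~\ref{prop:semiIdem}. The only mild subtlety, which is really the content of Corollary~\ref{stab2}, is the final step of replacing each ghost off-diagonal scalar $\bt_{i,j}$ by its tangible lift $\Inu{\bt_{i,j}}$, which is valid because by then all the ghost contributions to the off-diagonal blocks have already appeared and are absorbed into the tangible representative without changing the $\nu$-value of any entry. Since the existence of the full block triangular form and of the exponents $m_0$, $\tt$ are all already established, there is no further obstacle.
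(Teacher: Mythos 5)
Your proof is correct and follows essentially the same route as the paper: reduce to full block triangular form via Proposition~\ref{fullbl}, take a power making the diagonal blocks semi-idempotent via Corollary~\ref{semiid}, and then invoke Theorem~\ref{stab} and Corollary~\ref{stab2} to reach (tangibly) stable block triangular form. Your version is in fact more carefully spelled out than the paper's terse proof (which, due to an apparent typo, cites the theorem itself instead of Theorem~\ref{stab}), and your bookkeeping of the exponent $m=2\tt\,m_0$ and the stability of semi-idempotence under further powers is exactly what the paper intends.
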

\begin{proof} $A$ can be put into full block triangular form  by Proposition~\ref{fullbl}, and a
further power is in tangibly stable block triangular form, by
Theorem~\ref{stabil} and Corollary~\ref{semiid}.
\end{proof}

We call this $m$ the \textbf{stability index} of $A$.

\begin{example}\label{badex2} There is no bound (with respect to the matrix's size) on the stability index of  $A$. Indeed, in
Example~\ref{higho} we saw   a $2\times 2$ matrix $A$ such that
$A^{m-1}$ is nonsingular but $A^{m}$ is singular, where $m$ can be
arbitrarily large.
\end{example}

\begin{thm}\label{Jord} Any matrix $A \in M_n(R)$ has a Jordan decomposition,
where furthermore, in the notation of Definition~\ref{semisimple},
$|A| = |S|$.\end{thm}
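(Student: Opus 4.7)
The strategy is to reduce to the irreducible case via the full block triangular form (\propref{fullbl}), define $S$ and $N$ blockwise, and then prove the key claim that every irreducible block is already semisimple; once this is established, the Jordan decomposition essentially writes itself. The main technical input is \corref{corepower2}.

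First I would write $A$ in full block triangular form with irreducible diagonal blocks $B_1,\ldots,B_\tt$ (\propref{fullbl}). Let $S$ be the block-diagonal matrix whose diagonal blocks are $B_1,\ldots,B_\tt$ (with $\mzero$ in the off-diagonal block positions), and let $N$ be the matrix obtained from $A$ by replacing its diagonal blocks with $\mzero$. Then $A = S + N$, since for each entry $(i,j)$ exactly one of $S,N$ carries $A_{i,j}$ while the other contributes $\rzero$. Moreover $|S| = |B_1|\cdots|B_\tt| = |A|$ by \propref{fullbl}. Since $N$ is strictly block upper triangular, $N^\tt = \mzero$, so $N$ is ghostpotent. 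It remains to verify that $S$ is semisimple.

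Because $S$ is block diagonal, $S^k$ is block diagonal with blocks $B_i^k$. So if each $B_i$ is semisimple, say $B_i^{2k_i} = D_i B_i^{k_i}$ for some tangible diagonal $D_i$, then taking $k$ a common multiple of the $k_i$ and forming $D$ as the block-diagonal matrix whose blocks are the appropriate tangible diagonal matrices (rescaled by suitable powers of $\om_i$) yields $S^{2k} = DS^k$ with $D$ tangible diagonal. The theorem thus reduces to the claim: every irreducible matrix is semisimple. For an irreducible $B$, \corref{corepower2} gives $B^{kp+1} = \om^{(k-1)p} B^{p+1}$ for all $k>1$, where $p = m\tlmu$ with $m$ chosen as in \thmref{corepower}. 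Left-multiplying by $B^{s-1}$ extends this to $B^{jp+s} = \om^{(j-1)p} B^{p+s}$ for $j\ge 2$ and $s\ge 1$. Now pick $K = j'p$ with $j' \ge 3$, so $K = (j'-1)p + p$ and $2K = (2j'-1)p + p$ both fit the extended formula with $s=p$. I obtain $B^K = \om^{(j'-2)p} B^{2p}$ and $B^{2K} = \om^{(2j'-2)p} B^{2p}$, which combine to give $B^{2K} = \om^{j'p}\, B^K = (\om^{j'p}\,I)\,B^K$. Since $\om^{j'p} I$ is a tangible diagonal matrix, $B$ is semisimple.

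The main obstacle is ensuring that \corref{corepower2} supplies genuine equalities (not merely $\nu$-equivalences) in the regime used above; this is exactly the content of choosing $m$ large enough in \thmref{corepower} so that leading-scycle contributions dominate each $(i,j)$ entry of $B^{p+s}$. A subtler point is checking that the argument handles the degenerate case in which $B$ is irreducible and ghostpotent (that is, $\tancoreA$ is empty, cf.~\thmref{singp}): the corollary is stated for any irreducible $B$, and $\om = \Inu{\root{\mu}\of{\a_\mu}}$ is tangible by construction, so the derivation $B^{2K} = (\om^{j'p} I) B^K$ goes through even when $B$ eventually becomes ghost-valued. Combining the block diagonal $S$ (semisimple, with $|S|=|A|$) with the strictly block upper triangular $N$ (ghostpotent) then gives the desired Jordan decomposition.
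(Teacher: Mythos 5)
Your proposal is correct and takes essentially the same route as the paper: reduce to the irreducible diagonal blocks via the full block triangular form (the content of Lemma~\ref{trired}), then deduce semisimplicity of each irreducible block from the equality part of Corollary~\ref{corepower2}; your exponent manipulation simply makes explicit the step the paper leaves implicit, and your choice $N_i=\mzero$ with $N^{\tt}=\mzero$ is just the special case of Lemma~\ref{trired} combined with Lemma~\ref{ghostpot} that the paper's reduction produces.
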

\begin{proof} In view of Lemma~\ref{trired}, it suffices to assume
that $A$ is irreducible. Then,  we conclude with
Corollary~\ref{corepower2}.\end{proof}

\begin{example} The matrix (in logarithmic notation) $$ A = \(\begin{array}{cccc}
          10 & 10 & 9 &  - \\
          9 & 1 &  - &  - \\
          -  &  - &  -  & 9 \\
          9 &  - &  - & -  \\
        \end{array} \) $$  of \cite[Example~5.7]{IzhakianRowen2009Equations} (the empty places stand for $-\infty$) is semisimple, but the tangible matrix $B$ given there must be taken to be nonsingular.
 \end{example}

\section{Supertropical generalized eigenvectors and their
eigenvalues}\label{sec:eigenvectors}

 We started  studying supertropical eigenspaces in \cite{IzhakianRowen2008Matrices},
 and saw how to calculate supertropical eigenvectors in
\cite{IzhakianRowen2009Equations}, but also saw that the theory is
limited even when the characteristic polynomial factors into
tangible linear factors. To continue, we need to consider
generalized supertropical  eigenvectors. We recall
\cite[Definition~7.3]{IzhakianRowen2008Matrices}.

\begin{defn}\label{eigen3} A tangible  vector $v$ is a \textbf{generalized supertropical
eigenvector} of $A$, with \textbf{generalized supertropical
eigenvalue} $\bt \in \tTz$,
 if $$A^m v \lmodg \bt^m v$$ for some $m$;
the minimal such $m$ is called the \textbf{multiplicity}. A
\textbf{supertropical eigenvalue} (resp.~\textbf{supertropical
eigenvector}) is a
 generalized supertropical eigenvalue
(resp.~generalized supertropical eigenvector)  of multiplicity 1.
 A  vector
$v$ is a \textbf{strict eigenvector} of $A$, with
\textbf{eigenvalue} $\bt \in \tTz$,
 if $A v = \bt v$.


\end{defn}

Recall, cf. \cite[Definition
3.1]{IzhakianKnebuschRowen2009Linear}, that a vector $v \in
R^{(n)}$ is a \textbf{g-annihilator} of $A$ if  $Av \in
\tGz^{(n)}$, i.e., $Av \lmod \vzero$. A \textbf{tangible
g-annihilator}  is a g-annihilator that  belongs to $\tTz^{(n)}$.
(Accordingly, any tangible g-annihilator of $A$ is the same as a
supertropical eigenvector with supertropical eigenvalue $\rzero$.)
The \textbf{ghost kernel} of $A$ is defined as
$$ \Gker(A) := \{ v\in R^{(n)} \ | \ Av \in \tGz^{(n)} \}; $$
in particular $\tGz^{(n)} \subset \Gker(A)$ for any $A$. If $A$ is
a ghost matrix, then $\Gker(A) = R^{(n)}$.
\begin{example}\label{strict} Any quasi-identity matrix $A = \um_\tG$
has $n$ tropically independent strict eigenvectors, each with
eigenvalue $\rone$, namely the columns of $A$ (since $A$ is
idempotent and nonsingular). Likewise, any nonsingular
semi-idempotent matrix has $n$  tropically independent strict
eigenvectors, each with eigenvalue~$\bt(A).$
\end{example}

When $A$ is not necessarily nonsingular, we still have an
analogous result.

\begin{prop}\label{thick1} For any irreducible, semi-idempotent $n\times n$ matrix $A$,
if $s = \#( \tV(\tancore(A))),$ the $s$~columns of the submatrix
$(A)_{\tancore}$ (corresponding to $\tancore(A)$) are tropically
independent, strict eigenvectors of $(A)_{\tancore}$, and are also
supertropical eigenvectors of $A$, which can be expanded to a set
of $n$ tropically independent vectors of $R^{(n)}$, containing
$n-s$ tangible  g-annihilators of $A$.\end{prop}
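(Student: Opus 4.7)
My plan is to extract the semi-idempotent structure of $(A)_{\tancore}$ from the identity $A^{2}=\beta A$ (where $\beta:=\beta(A)$), then lift its eigenvectors to $R^{(n)}$ via columns of $A$, and finally complete to $n$ tropically independent vectors using g-annihilators. First, \lemref{semiidem} gives $\mu(A)=1$, so the leading $1$-\scycle s are the self-loops $A_{ii}\nucong\beta$, and $T:=\tV(\tancore(A))$ is the set of $i$ with $A_{ii}=\beta$ tangible and $i$ lying on no other leading \scycle. For $i,j\in T$ with $i\ne j$, the entry $A_{ij}$ is forced to be ghost or $\rzero$: in the expansion $(A^{2})_{ij}=\beta A_{ij}$, the summands $A_{ii}A_{ij}$ and $A_{ij}A_{jj}$ each equal $\beta A_{ij}$, so their sum is the ghost $(\beta A_{ij})^{\nu}$, while all other summands satisfy $A_{ik}A_{kj}\le_{\nu}\beta A_{ij}$; the only way the total matches the prescribed $\beta A_{ij}$ is for $\beta A_{ij}$ itself to be ghost. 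Restricting the same sum to $k\in T$ then yields $(A)_{\tancore}^{2}=\beta(A)_{\tancore}$, so $(A)_{\tancore}$ is semi-idempotent. Its determinant has the identity-permutation contribution $\beta^{s}$ tangible, and any nontrivial permutation would produce a leading multi-\scycle\ supported in $T$, contradicting core-admissibility; thus $|(A)_{\tancore}|=\beta^{s}$ is tangible and $(A)_{\tancore}$ is nonsingular. By \lemref{quasired}, $\beta^{-1}(A)_{\tancore}$ is a quasi-identity matrix.

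As columns of a nonsingular matrix, the $s$ columns $v_{j}$ ($j\in T$) of $(A)_{\tancore}$ are tropically independent by \cite[Theorem~6.5]{IzhakianRowen2008Matrices}, and by semi-idempotence they satisfy $(A)_{\tancore}v_{j}=\beta v_{j}$, so they are strict eigenvectors of $(A)_{\tancore}$ with eigenvalue $\beta$. Lifting each to $w_{j}:=Ae_{j}\in R^{(n)}$, whose restriction to $T$ is exactly $v_{j}$, one computes $Aw_{j}=A^{2}e_{j}=\beta Ae_{j}=\beta w_{j}$, so $w_{j}$ is a strict (hence supertropical) eigenvector of $A$; tropical independence of $\{w_{j}\}_{j\in T}$ in $R^{(n)}$ is inherited from that of their restrictions $\{v_{j}\}_{j\in T}$, since any ghost-producing combination would restrict to a ghost combination on the $T$-coordinates.

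The most delicate step is producing $n-s$ tangible g-annihilators so that the combined family has size $n$ and remains tropically independent. Here I would exploit that $\beta^{-1}A$ is itself idempotent, since $(\beta^{-1}A)^{2}=\beta^{-2}A^{2}=\beta^{-1}A$, so it acts as a supertropical projection onto the $\beta$-eigenspace. Invoking the g-kernel theory and the notion of thick subspace from \cite{IzhakianKnebuschRowen2009Linear}, one extracts $n-s$ tropically independent tangible vectors $u_{i}\in\Gker(A)$ indexed by $i\in T^{c}:=\{1,\dots,n\}\setminus T$, which in particular satisfy $Au_{i}\lmodg\vzero$. Joint tropical independence of $\{w_{j}\}_{j\in T}\cup\{u_{i}\}_{i\in T^{c}}$ then follows by restricting any ghost-producing linear combination to the $T$ coordinates: the $w_{j}|_{T}$ span the nonsingular block $(A)_{\tancore}$, so the $w$-part of the combination must separately be ghost on $T$, leaving the $u_{i}$ to contribute a ghost on their own, against the independence of the $u_{i}$. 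The main obstacle lies precisely in this last step: making the construction of the $u_{i}$ and the joint-independence verification fully rigorous via the projection decomposition afforded by the idempotent $\beta^{-1}A$.
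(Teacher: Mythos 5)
Your first half is correct and, if anything, more explicit than the paper's own argument: you derive $(A)_{\tancore}^2=\bt\,(A)_{\tancore}$, its nonsingularity, and the strict-eigenvector property of its columns directly from $A^2=\bt A$, whereas the paper scales $A$ to be idempotent, gets $(A)_{\tancore}v'=v'$ from a sandwich of inequalities, and gets the supertropical eigenvector statement for $A$ from Theorem~\ref{singp}. Two small debts in your version: you assert without proof that $\tancore(A)$ consists only of loops $i$ with $a_{i,i}$ tangible and $\nucong\bt$ (this is true for semi-idempotent matrices --- if a leading cycle of length $\ell\ge 2$ passed through $i$, then $(A^{\ell})_{i,i}=\bt^{\ell-1}a_{i,i}\ge_\nu \om^{\ell}$ would force the loop at $i$ to be leading as well, destroying core-admissibility --- but it needs saying); and a tangible $a_{i,i}\nucong\bt$ need not literally equal $\bt$, though your ghost argument for the off-diagonal entries survives with $\nucong$ in place of equality.

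The genuine gap is the final assertion, which you yourself flag as unfinished: you neither construct the $n-s$ tangible g-annihilators nor give a valid joint-independence argument. The independence sketch is wrong as stated: restricting a ghost-producing combination $\sum_j\alpha_j w_j+\sum_i\gamma_i u_i$ to the coordinates of $\tV(\tancore(A))$ does not allow you to conclude that the $w$-part is separately ghost there, because the $u_i$ have no reason to vanish (or be dominated) on those coordinates; and idempotence of $\bt^{-1}A$ does not by itself yield a supertropical ``projection'' decomposition of $R^{(n)}$ into eigenspace plus g-kernel. The paper does this step constructively: after reordering so that $\tV(\tancore(A))=\{1,\dots,s\}$, it uses the trick of \cite[Proposition~4.12]{IzhakianKnebuschRowen2009Linear} (the rows outside the tcore are tropically dependent on the tcore rows, the relevant entries being ghost by Theorem~\ref{singp}) to produce tangible $\bt_{u,i}\in\tTz$ with $v_u+\sum_i\bt_{u,i}v_i\in\tGz^{(n)}$ for each $u>s$; the corresponding tangible vectors (standard basis vectors corrected by the coefficients $\bt_{u,i}$) are then g-annihilators of $A$, and all $n$ vectors are assembled into a single block-triangular matrix $B'$ with diagonal blocks $(A)_{\tancore}$ and an identity block, so that $|B'|=|(A)_{\tancore}|$ is tangible and the nonsingularity of $B'$ certifies the tropical independence of the entire family at once. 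Some such global certificate (a single nonsingular matrix containing all $n$ vectors), together with an actual construction of the annihilators, is exactly what your proposal is missing.
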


\begin{proof} Replacing $A$ by ${\bt^{-1}}A$, where $\bt = \bt(A)$,  we may assume
that $A$ is an idempotent matrix.
Let $U$ denote the subspace of $R^{(n)}$ corresponding to
$(A)_{\tancore}$. If $v$ is a column of $A$, and $v' = v|_U$ is
its restriction to a column of $U$, then clearly
$$(A)_{\tancore} v' \le (A v)|_U = v' = Iv' \le (A)_{\tancore} v',$$ implying
$(A)_{\tancore} v' = (A v)|_U = v'$.

These vectors $v$ are also supertropical eigenvectors of $A$,
since the other components of $Av$ are ghost, in view of
Theorem~\ref{singp}.

To prove the last assertion, we repeat the trick of
\cite[Proposition~4.12]{IzhakianKnebuschRowen2009Linear}).
Rearranging the base, we may assume that $\tV(\tancore(A)) = \{ 1,
\dots, s \}.$ For any other row $v_u$ of $A$ ($m < u \le n$), we
have $\bt_{u,1}, \dots, \bt_{u,m}\in \tTz$ such that $v_u + \sum
\bt_{i,j} v_i \in \tGz^{(n)}.$

Let  $B'$ be the $(n-m) \, \times n$ matrix whose first $s$
columns are the $s$ columns of $(A)_{\tancore}$ (with
$(i,j)$-entry~$\rzero$ for $i>s$) and whose entries $(i,j)$ are
$\bt_{i,j}$ for $1 \leq i,j \leq m$,   and for which $\bt_{i,j} =
\delta_{i,j}$ (the Kronecker delta) for $m < j \le n$.  Then $B'$
is block triangular with two diagonal blocks, one of which is the
identity matrix, implying $|B'| = |(A)_{\tancore}|$ and thus $B'$
is nonsingular. This gives us the desired $n$ tropically
independent supertropical eigenvectors.
\end{proof}

\begin{lem}\label{getweak}
If $A^m v \gsim \bt^m v$ for a tangible vector $v$, some $m$, and
$\bt \in \tTz$, then  $v$ is a generalized supertropical
eigenvector of $A$ of multiplicity $m$, with generalized
supertropical eigenvalue $\bt$.
\end{lem}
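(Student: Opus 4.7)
The plan is to unpack both sides coordinatewise, using the fact that $\bt^m v$ has tangible (or $\rzero$) entries, and then apply the supertropical axioms to convert ghost--dependence into ghost--surpassing.

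First I would reduce to the scalar setting. Write $x_i := (A^m v)_i$ and $y_i := (\bt^m v)_i$. Interpreting the relation $\gsim$ coordinatewise, the hypothesis says that $x_i + y_i \in \tGz$ for every $i$. Since $v \in \tT^{(n)}$ and $\bt \in \tTz$, the vector $\bt^m v$ has every coordinate in $\tTz$, so each $y_i$ is either $\rzero$ or a tangible element of~$R$. The goal is to show $x_i \lmodg y_i$ for each $i$, which by definition of $\lmodg$ on $R^{(n)}$ will give $A^m v \lmodg \bt^m v$, confirming the conclusion of the lemma (with multiplicity at most $m$).

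Next, for each index $i$ I would argue by a short case analysis driven by the trichotomy provided by the supertropical \semiring0 axioms. If $y_i = \rzero$, then $x_i = x_i + y_i \in \tGz$, so $x_i \lmodg \rzero = y_i$. If $y_i \in \tT$, then by axioms (i) and (ii) exactly one of the three possibilities holds:
\begin{itemize}
\item $y_i >_\nu x_i$: here $x_i + y_i = y_i \in \tT$, contradicting $x_i + y_i \in \tGz$, so this case is excluded;
\item $x_i >_\nu y_i$: here $x_i + y_i = x_i$, which being a ghost forces $x_i \in \tG$, and then $y_i + x_i = x_i$ exhibits $x_i = y_i + \ghost$, i.e.\ $x_i \lmodg y_i$;
\item $x_i \nucong y_i$: then $x_i^\nu = y_i^\nu$, and using the fact that the ghost map acts as the identity on $\tG$, $x_i$ must equal either $y_i$ itself or the ghost $y_i^\nu = y_i + y_i$, either of which gives $x_i \lmodg y_i$.
\end{itemize}

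The only step that requires any care is the $\nu$-equivalent case, since a priori $x_i$ tangible with $x_i \nucong y_i$ but $x_i \ne y_i$ would fail to be ghost-surpassed by $y_i$; however, in our setting (a divisible supertropical \field0\ with the standing identification $\hnu(a^\nu) = a$) the map $\nu_\tT : \tT \to \tG$ is a bijection, so $x_i \nucong y_i$ with both tangible forces $x_i = y_i$. Combining the cases yields $x_i \lmodg y_i$ for every coordinate, hence $A^m v \lmodg \bt^m v$, so $v$ is a generalized supertropical eigenvector of $A$ with eigenvalue $\bt$ and multiplicity at most $m$, as required.
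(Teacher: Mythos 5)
Your overall strategy is the same as the paper's: the whole content of the lemma is that tangibility of $\bt^m v$ lets one upgrade ghost dependence ($\gsim$) to ghost surpassing ($\lmodg$). The paper does this in one line by citing \cite[Lemma~2.9]{IzhakianKnebuschRowen2009Linear}; you instead reprove that implication coordinatewise from the supertropical axioms, and your cases $y_i=\rzero$, $y_i >_\nu x_i$, and $x_i >_\nu y_i$ are handled correctly.

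The weak point is exactly the case you flagged as delicate, $x_i$ tangible with $x_i \nucong y_i$. Your resolution rests on the claim that in this paper's setting $\nu_\tT : \tT \to \tG$ is a bijection, but the paper says the opposite: ``in general, the map $\nu: \tT \to \tG$ need not be 1:1,'' and neither divisibility nor the choice of a multiplicative section $\hnu$ forces injectivity (e.g.\ take tangibles $\R \times G$ for a divisible group $G$, with $\nu$ forgetting the $G$-component). In such a supertropical semifield two distinct $\nu$-equivalent tangibles $x_i \ne y_i$ do satisfy $x_i + y_i \in \tG$ but not $x_i \lmodg y_i$, so this case cannot be closed by the axioms alone; it is precisely the point the paper buries in the citation of Lemma~2.9 of the linear-algebra paper. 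To repair your argument you should either invoke that lemma as the paper does, or make explicit whatever hypothesis (injectivity of $\nu$ on $\tT$, or an identification of $\nu$-equivalent tangibles) is needed to rule out $x_i \ne y_i$ in this case; as written, that step is a genuine gap.
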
 \begin{proof} The vector $\bt ^m v$ is tangible, so clearly $A^m v \lmodg \bt^m
v$ (cf.~\cite[Lemma~2.9]{IzhakianKnebuschRowen2009Linear}).
\end{proof}

\begin{lem} If $\bt$ is a generalized supertropical
eigenvalue for $A$ of multiplicity $m$, then $\bt$ also is a
generalized supertropical   eigenvalue for $A$ of multiplicity $m'$,
for each multiple $m'$ of $m$.
\end{lem}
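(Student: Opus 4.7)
The plan is to unwind the relation $A^m v \lmodg \bt^m v$, which by definition means $A^m v = \bt^m v + g$ for some ghost vector $g \in \tGz^{(n)}$, and then prove by induction on $k \ge 1$ that $A^{km} v \lmodg \bt^{km}v$, since the multiples of $m$ are precisely the integers $km$. The base case $k=1$ is the hypothesis.

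For the inductive step, suppose $A^{km} v = \bt^{km} v + g_k$ for some ghost vector $g_k$. Then I would apply $A^m$ to both sides, which is legitimate because matrix multiplication distributes over addition in $R^{(n)}$, yielding
\begin{equation*}
A^{(k+1)m} v \;=\; A^m\bigl(\bt^{km} v + g_k\bigr) \;=\; \bt^{km}\,A^m v \;+\; A^m g_k \;=\; \bt^{km}\bigl(\bt^m v + g\bigr) + A^m g_k \;=\; \bt^{(k+1)m} v \;+\; \bigl(\bt^{km} g + A^m g_k\bigr).
\end{equation*}
The error term $\bt^{km} g + A^m g_k$ is a ghost vector: since $\tG$ is a semigroup ideal in $R$, every entry of $\bt^{km} g$ lies in $\tGz$ (scalar multiplication of a ghost by any element of $R$ is ghost), and likewise each entry of $A^m g_k$, being a sum of products of entries of $A^m$ with ghost entries of $g_k$, lies in $\tGz$; the sum of two ghost vectors is again ghost. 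Hence $A^{(k+1)m} v \lmodg \bt^{(k+1)m} v$, closing the induction.

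Thus for every $k \ge 1$, the relation $A^{km} v \lmodg \bt^{km} v$ holds with the same tangible vector $v$, which is exactly what is required for $\bt$ to be a generalized supertropical eigenvalue for $A$ in the sense of Definition~\ref{eigen3}, associated with the multiple $m' = km$. There is no genuine obstacle here; the only subtlety is verifying that the ghost ideal $\tGz$ is preserved under left multiplication by an arbitrary matrix in $M_n(R)$, which reduces entrywise to the fact that $\tG$ is a semigroup ideal of $R$ closed under addition.
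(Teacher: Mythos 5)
Your proof is correct and follows essentially the same route as the paper: an induction on $k$ showing $A^{km}v \lmodg \bt^{km}v$, using that the ghost-surpasses relation is preserved under multiplication by $A^m$ and by the tangible scalar $\bt^{km}$ (the paper compresses this into a one-line chain $A^{km}v = A^{(k-1)m}A^m v \lmodg \bt^m A^{(k-1)m}v \lmodg \bt^{km}v$). Your explicit entrywise verification that $\tGz$ absorbs matrix multiplication and addition is just an unpacking of that same compatibility fact.
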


\begin{proof}
  $$ A ^{km} v =  A ^{(k-1)m} A^m v  \lmodg A ^{(k-1)m} \bt^m v = \bt^m  A ^{(k-1)m} v  \lmodg   \bt^{km} v,$$ by
  induction.
\end{proof}

\begin{prop}\label{wksp} The  generalized supertropical eigenvectors
corresponding to a supertropical eigenvalue~$\bt$ form a subspace
$V_\bt(A) \subset R^{(n)}$ which is $A$-invariant.\end{prop}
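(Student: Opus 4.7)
My plan is to take $V_\bt(A)$ to consist of all vectors $v \in R^{(n)}$ (not only the tangible ones) for which $A^m v \lmodg \bt^m v$ for some $m \in \Net$; note that one must allow non-tangible vectors since the sum of two tangible generalized eigenvectors may acquire ghost entries. With this definition the task becomes the verification of three closure properties: under scalar multiplication, under addition, and under the action of $A$. All three ultimately reduce to the single observation that the ghost-surpassing relation $\lmodg$ is compatible both with addition and with left multiplication (by scalars or by matrices), since the ghost ideal $\tG$ is an $R$-ideal closed under addition, and the ghost vectors $\tGz^{(n)}$ form a submodule of $R^{(n)}$ stable under left multiplication by any matrix.

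For closure under scalar multiplication, given $v \in V_\bt(A)$ and $c \in R$, write $A^m v = \bt^m v + g$ with $g$ ghost; then $A^m(cv) = c A^m v = \bt^m(cv) + cg$, and $cg \in \tGz^{(n)}$ since $\tG$ is an ideal.

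For closure under addition, given $v_1, v_2 \in V_\bt(A)$ of respective multiplicities $m_1, m_2$, the preceding lemma (that every multiple of a multiplicity is again a multiplicity) lets me pass to the common value $m = m_1 m_2$, so that $A^m v_i = \bt^m v_i + g_i$ with $g_i$ ghost for $i = 1,2$. Summing gives $A^m(v_1 + v_2) = \bt^m(v_1 + v_2) + (g_1 + g_2)$, and $g_1 + g_2$ is ghost because $\tGz^{(n)}$ is closed under addition. For $A$-invariance, starting from $A^m v = \bt^m v + g$, left multiplication by $A$ yields $A^m(Av) = A \cdot A^m v = \bt^m(Av) + Ag$, and $Ag \in \tGz^{(n)}$ since $\tGz$ is $A$-invariant.

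There is no serious obstacle in this argument; the proposition is essentially a consistency check that the definition of generalized supertropical eigenvector behaves well under the natural operations. The only point requiring care is the synchronization of multiplicities in the addition step, which is precisely what the lemma immediately preceding the proposition supplies.
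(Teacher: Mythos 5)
Your proof is correct and follows essentially the same route as the paper: a direct verification that the defining relation $A^m v \lmodg \bt^m v$ is preserved under addition, scalar multiplication, and left multiplication by $A$, using that the ghost vectors are closed under these operations. Your one refinement is to synchronize multiplicities at the common multiple $m_1m_2$ via the preceding lemma (the paper simply takes the maximum of $m$ and $m'$, which that lemma does not literally cover), so your handling of that step is if anything slightly more careful than the paper's.
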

\begin{proof} If  $v,w\in V_\bt(A),$ then
$$A^m v \lmodg \bt^m v, \qquad A^{m'} w \lmodg \bt^{m'} w,$$
for suitable $m,m'$, so taking their maximum $m''$ yields
 $A^{m''} (v+w) \lmodg \bt^{m ''}(v+w),$
and likewise for scalar products, implying $\al v\in V_\bt(A),$
for any $\al \in R$.

Also,
$$A^m(Av) = A(A^m v) \lmodg A(\bt^m v) = \bt^m (Av),$$
and thus $Av \in  V_\bt(A).$
\end{proof}

We call this space $ V_\bt(A)$ the \textbf{generalized
supertropical  eigenspace} of $\bt$. This is easiest to describe
when $A$ is nonsingular.

\begin{thm}\label{eigendec} Suppose a nonsingular matrix $A$ is in stable block triangular
form, notation as in Definition~\ref{blocktri}, and write $V = V_1
\oplus \cdots \oplus V_\tt$ where each $V_i$ has rank $n_i$ and
$$Av_j =  \sum _i B_{i,j} v_i, \quad \forall v_j \in V_j.$$
Then there are supertropical eigenspaces $\tlV_j$ of $A$ with
respect to supertropical eigenvalues $\bt _j$, such that $ V _A :=
\tlV_1 \oplus \cdots \oplus \tlV_\tt$ is a \textbf{thick} subspace
of $V$ in the sense of \cite[Definition
5.28]{IzhakianKnebuschRowen2009Linear} (which means that $ V _A$
also has rank $n$).
\end{thm}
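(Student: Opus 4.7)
The plan is to construct, for each $j$, a subspace $\tlV_j \subseteq V_1\oplus\cdots\oplus V_j$ of rank $n_j$ consisting of (generalized) supertropical eigenvectors of $A$ with eigenvalue $\bt_j:=\bt(B_j)$, and then to check that their direct sum is thick.

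Since $A$ is nonsingular and in stable block triangular form, each diagonal block $B_j$ is nonsingular and semi-idempotent with coefficient $\bt_j$. By Lemma~\ref{quasired}, $\bt_j^{-1}B_j$ is a quasi-identity matrix, so its columns $u_1^{(j)},\dots,u_{n_j}^{(j)}$ (equivalently, the columns of $B_j$ up to the scalar $\bt_j$) are tropically independent strict eigenvectors of $B_j$ with eigenvalue $\bt_j$, cf.~Example~\ref{strict}.

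Next, for each $u=u_k^{(j)}\in V_j$ I would produce a lift $\tilde u\in V_1\oplus\cdots\oplus V_j$ whose $V_j$-component is $u$ and which is a (generalized) supertropical eigenvector of $A$ for $\bt_j$. Set $v_j:=u$ and, recursively downward for $i=j-1,j-2,\dots,1$, define
$$v_i \; := \; \bt_j^{-1}\Bigl(B_{i,j}\,u \; + \sum_{i<k<j} B_{i,k}\,v_k\Bigr),$$
and put $\tilde u:=v_1+v_2+\cdots+v_j$. A direct computation gives that the $V_j$-component of $A\tilde u$ is $B_ju=\bt_j u$, while for $i<j$ the $V_i$-component of $A\tilde u$ equals $B_iv_i+\sum_{i<k\le j}B_{i,k}v_k+B_{i,j}u$, which by the defining formula collapses to $B_iv_i+\bt_j v_i$. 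Hence the desired relation $A\tilde u\lmodg \bt_j\tilde u$ reduces to the pointwise condition $B_iv_i\lmodg \bt_jv_i$ for each $i<j$.

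The main obstacle lies here: controlling $B_iv_i$ against $\bt_jv_i$. I would combine the semi-idempotence of $B_i$ (whose entries are $\nu$-dominated by $\bt_i$) with the structural constraints of Theorem~\ref{stab} on the scalars $\bt_{i,j}$ (the maximal $\bt$-coefficient along a path from $i$ to $j$) to show the $V_i$-components of $v_i$ are arranged so that $B_iv_i$ does not strictly dominate $\bt_jv_i$ outside a ghost. The easy case is $\bt_j\nuge\bt_i$, where the estimate is immediate. The delicate case is $\bt_i>_\nu\bt_j$, which would make $\bt_jv_i$ too small in some $V_i$-slots; there I would pass to $A^m$ for $m$ large and use the stable form of $A^\tt$ together with Theorem~\ref{stab} (the formula $A(A^\tt v_j)=\sum_{i\le j}\bbt_{i,j}v_i$) to show that the iterated contributions pile up as ghosts on those slots, yielding $A^m\tilde u\lmodg \bt_j^m\tilde u$ in the sense of Definition~\ref{eigen3}.

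Finally, set $\tlV_j:=\mathrm{span}\{\tilde u_1^{(j)},\dots,\tilde u_{n_j}^{(j)}\}$. The projection $\tlV_j\to V_j$ onto the $V_j$-component sends the generators bijectively to the tropically independent columns of $B_j$, so $\tlV_j$ has rank $n_j$. For joint tropical independence in $\tlV_1\oplus\cdots\oplus\tlV_\tt$, suppose $\sum_{j,k}\alpha_{j,k}\tilde u_k^{(j)}\in\tGz^{(n)}$ with tangible $\alpha_{j,k}$; projecting onto the top block $V_\tt$ yields a tangible combination of the columns of $B_\tt$ that is ghost, which by their tropical independence forces all $\alpha_{\tt,k}$ to be $\rzero$. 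Downward induction on $j$ then gives $\alpha_{j,k}=\rzero$ for all $j,k$, so $V_A:=\tlV_1\oplus\cdots\oplus\tlV_\tt$ has rank $n_1+\cdots+n_\tt=n$, which is the thickness condition of \cite[Definition~5.28]{IzhakianKnebuschRowen2009Linear}.
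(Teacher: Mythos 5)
There is a genuine gap, and it is exactly the step you flag as ``the main obstacle.'' Your recursion $v_i := \bt_j^{-1}\bigl(B_{i,j}u+\sum_{i<k<j}B_{i,k}v_k\bigr)$ normalizes by the \emph{target} eigenvalue $\bt_j$, which forces you to verify $B_iv_i\lmodg \bt_j v_i$ for every $i<j$. This is unproven in your write-up and is genuinely problematic: when $\bt_i>_\nu\bt_j$ and $B_i$, $v_i$ are tangible, the $V_i$-slot of $A\tilde u$ is a tangible vector strictly dominating $\bt_j v_i$, and a tangible element that strictly dominates cannot ghost-surpass, so the relation $A\tilde u\lmodg\bt_j\tilde u$ simply fails for $A$ itself. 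Even your ``easy case'' $\bt_j\nuge\bt_i$ is not immediate, since $v_i$ need not lie in the column space of $B_i$, so semi-idempotence gives no direct control of $B_iv_i$ against $\bt_i v_i$. Your proposed rescue --- pass to $A^m$ and hope the ``iterated contributions pile up as ghosts'' --- is not carried out, is not true in general (a single maximal tangible path contributes a tangible dominant entry, which stays tangible under powers), and in any case would only yield generalized eigenvectors, whereas the theorem (and its role via Definition~\ref{eigen3}) asserts supertropical eigenvectors of $A$ with eigenvalue $\bt_j$, i.e.\ the multiplicity-one relation $A\tlv_j\lmodg\bt_j\tlv_j$.

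The paper's proof follows the same overall strategy (lift the thick column space of each diagonal block $B_j$ downward through the off-diagonal blocks, then get thickness of the direct sum from the triangular shape), but the recursion is normalized differently: the $i$-th component is taken to be $\bt_i^{-1}$ times the incoming contribution from the higher components, i.e.\ $\tlv_{i,j}=\sum_{k>i}\frac{\bbt_{k,j}}{\bt_i}\tlv_{k,j}$ in the notation of Theorem~\ref{stab}. Then the diagonal action contributes $\bt_i\cdot\bt_i^{-1}(\text{incoming term})$, which is $\nu$-equivalent to the off-diagonal contribution itself, so the two add to a ghost: the $i$-component of $A\tlv_j$ is ghost outright, with no comparison of $B_iv_i$ against $\bt_jv_i$ and no case analysis on whether $\bt_i$ or $\bt_j$ dominates. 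That matched-cancellation device is the missing idea; without it (or a worked-out substitute for the case $\bt_i>_\nu\bt_j$), your construction does not establish the stated eigenvector relation. Your final thickness argument (project to the top block, use tropical independence of the columns of $B_\tt$, and induct downward) is fine and agrees in substance with the paper.
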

\begin{proof} Each diagonal block $B_j$ is nonsingular. Let $V_j'$ denote the subspace of
$V_j$ spanned by the rows of~$B_j.$ In other words, $$V_j' : = \{
B_j v : v \in V \},$$   a thick subspace of $V$ in view of
\cite[Remark~6.14]{IzhakianKnebuschRowen2009Linear},
 since $B_j$   behaves like a
 quasi-identity matrix in view of Lemma~\ref{quasired}.

  Now for each $v \in
 V_j'$ we write $Av = \sum _{i=1}^\tt v_i$ where $v_i \in V_i.$ By
Theorem~\ref{stab}, $$Av_j = \sum _{i=1}^j  \bbt _{i,j} v_i$$ for
$v_j \in \hV_j$. Starting with $i = j$ we put $\tlv  _{j,j} = v_j$
and, proceeding by reverse induction, given $\tlv  _{k,j}$ for
$i<k \le j$ take
$$\tlv  _{i,j} = \sum _{k=i+1}^j \frac{\bbt _{k,j}}{\bt_i} \tlv_{k,j}.$$
We put $$\tlv _j =   \tlv  _{1,j}+ \cdots + \tlv _{j,j}.$$  Then
for each $i<j$ the $i$-component of $A \tlv _j$ is
$$\sum _{k=i+1}^j \left(  \bbt _{k,j}   \tlv_{k,j} + {\bt_i}\frac{\bbt _{k,j}}{\bt_i} \tlv _{k,j} \right)=
\sum _{k=i+1}^j  \bbt _{k,j} ^\nu    \tlv _{k,j}
 ,$$ whereas the $j$-component of $A \tlv _j$ is $\bt _j
\tlv _j$. Hence, $A \tlv  _j \lmodg \bt _j \tlv  _j$, as desired.
\end{proof}

When $A$ need not be nonsingular, we need to modify the assertion
slightly.

\begin{thm}\label{eigendec1} Suppose the matrix $A$ is in stable block triangular
form, notation as in Definition~\ref{blocktri}, and write $V = V_1
\oplus \cdots \oplus V_\tt$ where each $V_i$ has rank $n_i$ and
$$Av_j =  \sum _i B_{i,j} v_i, \quad \forall v_j \in V_j.$$ Let
$s_j = \# (\tV(\tancore(B_j))).$ Then there are supertropical
eigenspaces $\tlV_j$ of $A$ with respect to supertropical
eigenvalues $\bt _j$, as well as a g-annihilator space $V_0$, such
that $ V _A := \tlV_0 \oplus \tlV_1 \oplus \cdots \oplus \tlV_\tt$
is a  thick  subspace of $V$.
\end{thm}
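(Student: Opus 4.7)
The plan is to adapt the construction used in the proof of \thmref{eigendec}, which built lifted eigenvectors by reverse induction over the block structure, to the singular case by splitting each block's contribution into a tangible eigenspace piece and a g-annihilator piece.

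First, I would apply \propref{thick1} separately to each irreducible diagonal block $B_j$, which by hypothesis is semi-idempotent with coefficient $\bt_j$. This produces within $V_j$ a set of $s_j$ tropically independent supertropical eigenvectors of $B_j$ (namely, the columns of $(B_j)_{\tancore}$, extended by $\rzero$ outside the tancore coordinates) with eigenvalue $\bt_j$, together with $n_j-s_j$ tropically independent tangible g-annihilators of $B_j$; the combined $n_j$ vectors span a thick subspace of $V_j$. Call the eigenvectors $v_j^{(1)},\dots,v_j^{(s_j)}$ and the g-annihilators $w_j^{(1)},\dots,w_j^{(n_j-s_j)}$.

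Second, I would reuse the reverse induction of \thmref{eigendec} to lift each $v_j^{(k)}$ to a generalized supertropical eigenvector $\tlv_j^{(k)}$ of $A$ with eigenvalue $\bt_j$. Concretely, for decreasing $i<j$ set $\tlv_{i,j}^{(k)}:=\sum_{r=i+1}^j\frac{\bbt_{r,j}}{\bt_i}\tlv_{r,j}^{(k)}$, with $\tlv_{j,j}^{(k)}=v_j^{(k)}$, and then $\tlv_j^{(k)}:=\sum_{i\le j}\tlv_{i,j}^{(k)}$. The exact argument of \thmref{eigendec} then shows $A\tlv_j^{(k)}\lmodg\bt_j\tlv_j^{(k)}$. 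To handle the possibility $\bt_i=\rzero$ (which occurs precisely when $B_i$ is ghostpotent, by \thmref{singp}), one first replaces $A$ by a high enough power so that every such $B_i$ becomes ghost, after which the $V_i$-contribution is automatically absorbed into the ghost part and no division by $\rzero$ is needed. Define $\tlV_j:=\operatorname{span}(\tlv_j^{(1)},\dots,\tlv_j^{(s_j)})$.

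Third, I would treat the g-annihilator vectors $w_j^{(l)}$ by the same reverse induction, producing lifts $\tilde{w}_j^{(l)}$ whose projection to $V_j$ is $w_j^{(l)}$ and whose higher-index components are $\rzero$. Because $B_j w_j^{(l)}\in\tGz^{(n_j)}$ and each off-diagonal product $B_{i,j}\tilde{w}_j^{(l)}$ with $i<j$ either is already ghost or is dominated by a ghost term in the $V_i$ correction (after passing to the power $A^m$ supplied by \thmref{stabil}), we obtain $A^m\tilde{w}_j^{(l)}\in\tGz^{(n)}$. Set $\tlV_0$ to be the span of all these $\tilde{w}_j^{(l)}$; it is a g-annihilator space for $A^m$. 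Finally, because each lift is block upper-triangular with its diagonal projection equal to one of the tropically independent vectors produced in step one, the full collection of $\sum_j s_j+\sum_j(n_j-s_j)=n$ lifted vectors remains tropically independent, so $V_A=\tlV_0\oplus\tlV_1\oplus\cdots\oplus\tlV_\tt$ is thick in $V$. The main obstacle is the bookkeeping in step three: one must verify that the off-diagonal contributions produced when lifting a g-annihilator $w_j^{(l)}$ combine ghost-wise with the diagonal $\tGz$-image, which requires choosing $m$ large enough (via \thmref{stabil} and \corref{stab2}) so that all relevant coefficients $\bbt_{i,j}$ either appear ghost or are dominated by a ghost term already present in the expansion of $A^m\tilde{w}_j^{(l)}$.
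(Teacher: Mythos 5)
Your proposal follows the paper's own route: the paper's proof of this theorem simply repeats the argument of \thmref{eigendec}, replacing the column space of a singular block $B_j$ by the thick subspace supplied by \propref{thick1}, whose $s_j$ supertropical eigenvectors are lifted by the same reverse induction and whose $n_j-s_j$ tangible g-annihilators supply the space $V_0$; so your first two steps and the final independence count are exactly the intended argument.

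Where you deviate is in the repeated passage to a power $A^m$, and this detour is both unnecessary and, taken literally, off-target: it would produce a g-annihilator space of $A^m$ and eigenvector relations verified only for a power, whereas the statement concerns $A$ itself. It is not needed because the hypothesis that $A$ is in stable block triangular form already forces each diagonal block $B_i$ to be semi-idempotent with a \emph{tangible}, hence invertible, coefficient $\bt_i$; thus the division by $\bt_i$ in the reverse induction is always legitimate and the case $\bt_i=\rzero$ you guard against cannot occur. Likewise no extra bookkeeping is required for the g-annihilators: applying the identical correction $\tilde w_{i,j}=\sum_{k=i+1}^{j}\frac{\bbt_{k,j}}{\bt_i}\,\tilde w_{k,j}$ to a g-annihilator $w_j$ of $B_j$, the $j$-component of $A\tilde w_j$ is $B_jw_j\in\tGz^{(n_j)}$ by construction, and each $i$-component with $i<j$ becomes ghost exactly as in the eigenvector computation, since $\bbt_{k,j}\tilde w_{k,j}+\bt_i\frac{\bbt_{k,j}}{\bt_i}\tilde w_{k,j}=\bbt_{k,j}^{\nu}\tilde w_{k,j}$; hence $A\tilde w_j\lmodg\vzero$ already holds for $A$ itself, and these lifts span the required $V_0$. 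With that repair your argument coincides with the paper's, and the thickness of $V_A$ follows from the block-triangular shape of the lifts together with the tropical independence furnished by \propref{thick1}, as in \thmref{eigendec}.
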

\begin{proof} We repeat the proof of Theorem~\ref{eigendec} noting that when $B_j$ is singular,
 one could take $\hB_j$ to be the space of
 Proposition~\ref{thick1}, which provides extra g-annihilating  vectors in each component, but does not affect the rest of the
argument.
\end{proof}

\subsection{Weak generalized supertropical eigenspaces }
 Generalized eigenspaces are understood better when we
introduce the following related notion. (We write $\vzero$ for the
zero vector in $R^{(n)}$.)

\begin{defn}\label{eigen4} A  vector $v\ne \vzero$ is a \textbf{weak generalized supertropical  eigenvector} of $A$,
with (tangible) \textbf{weak $m$-generalized supertropical
eigenvalue} $\bt \in \tTz$,
 if $$(A^m + \bt^m I) ^k v \lmodg \vzero$$   for some~$k$.
\end{defn}

\begin{rem} Any generalized supertropical  eigenvector is a weak $m$-generalized supertropical
eigenvector, in the view of Remark~\ref{Frob}.
\end{rem}
\begin{lem} If $\bt $ is a weak $m$-generalized supertropical
eigenvalue for $A$, then $\bt $ also is a weak $m'$-generalized
supertropical   eigenvalue for $A$, for each $m'$ dividing $m$.
\end{lem}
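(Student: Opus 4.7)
The plan is to reduce to a direct application of the supertropical Frobenius property (Proposition~\ref{Frob}), exploiting the fact that the scalar matrix $\bt^{m'}I$ commutes with everything. Write $m = m'd$. Because $A^{m'}$ and $\bt^{m'}I$ commute strictly, the hypothesis $ab \gsim ba$ of Proposition~\ref{Frob} is satisfied trivially (both products are equal), and applying it with $a = A^{m'}$, $b = \bt^{m'}I$ gives
$$(A^{m'} + \bt^{m'}I)^d \;\lmodg\; (A^{m'})^d + (\bt^{m'}I)^d \;=\; A^m + \bt^m I.$$

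Next I would lift this ghost-surpassing relation to arbitrary powers. Writing $(A^{m'} + \bt^{m'}I)^d = (A^m + \bt^m I) + G$ for some ghost matrix $G$, any expansion of $\bigl((A^m + \bt^m I) + G\bigr)^k$ as a sum of length-$k$ words in the two letters $(A^m + \bt^m I)$ and $G$ produces the single word $(A^m + \bt^m I)^k$ together with words containing at least one factor of $G$; since $\tG$ is a (two-sided) semigroup ideal, each such word is a ghost matrix. Hence there is a ghost matrix $G'$ with
$$(A^{m'} + \bt^{m'}I)^{dk} \;=\; (A^m + \bt^m I)^k + G'.$$

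Finally I would apply both sides to the vector $v$. By hypothesis $(A^m + \bt^m I)^k v$ has all components in $\tGz$, and $G'v$ has all components in $\tGz$ since $G'$ is ghost and $\tGz$ is an ideal. Their sum therefore lies in $\tGz^{(n)}$, so $(A^{m'} + \bt^{m'}I)^{dk} v \lmodg \vzero$. Taking $k' = dk$ exhibits $\bt$ as a weak $m'$-generalized supertropical eigenvalue for $A$.

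I do not expect any real obstacle: the argument is entirely routine once one notices that scalar matrices commute unconditionally, so Proposition~\ref{Frob} applies without any subsidiary ghost-dependence hypothesis to verify. The only minor point to check carefully is that $\lmodg$ survives being raised to a power even when the ambient matrices need not commute with each other, which is why I would phrase the expansion in terms of noncommutative words rather than a binomial expansion.
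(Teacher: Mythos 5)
Your proof is correct and follows essentially the same route as the paper: write $m=m'd$, apply Proposition~\ref{Frob} with $a=A^{m'}$, $b=\bt^{m'}I$ (the hypothesis $ab\gsim ba$ holding trivially since these commute), and then raise the resulting relation $(A^{m'}+\bt^{m'}I)^d\lmodg A^m+\bt^m I$ to the $k$-th power and apply it to $v$. The only difference is that you spell out, via the word-expansion and the ideal property of $\tG$, why $\lmodg$ survives taking the $k$-th power and multiplying by $v$, a step the paper passes over with ``yielding.''
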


\begin{proof} Write $m = m'd$.
  $$ (A ^{m'} + \bt ^{m'} I )^d   \lmodg A^{m'd} + \bt^{m'd} I = A^{m } + \bt^{m } I  ,$$ by Proposition~\ref{Frob},
  yielding $ (A ^{m'} + \bt ^{m'} I )^{dk} v  \lmodg (A^{m } + \bt^m I)^k v $.
Thus, $v$ is a weak $m'$-generalized supertropical  eigenvector
for $A$.
\end{proof}

Just as with Proposition~\ref{wksp}, we have (with the analogous
proof):
\begin{prop}\label{wksp1} The weak $m$-generalized supertropical
eigenvectors 
corresponding to a root $\bt$ form an  $A$-invariant subspace of $
R^{(n)}$.\end{prop}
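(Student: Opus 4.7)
The plan is to adapt the argument of Proposition~\ref{wksp} essentially verbatim, exploiting that $A^m + \bt^m I$ is itself a polynomial in $A$ and hence commutes with $A$. Let $W$ denote the set of weak $m$-generalized supertropical eigenvectors of $A$ corresponding to $\bt$, together with $\vzero$; we will show that $W$ is closed under addition, closed under scalar multiplication, and $A$-invariant.

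For additive closure, suppose $v,w \in W$ with $(A^m + \bt^m I)^{k_1} v \lmodg \vzero$ and $(A^m + \bt^m I)^{k_2} w \lmodg \vzero$. Setting $k = \max(k_1,k_2)$, we get $(A^m + \bt^m I)^{k} v = (A^m + \bt^m I)^{k-k_1}\bigl((A^m + \bt^m I)^{k_1} v\bigr) \in \tGz^{(n)}$, since a matrix applied to a ghost vector stays in $\tGz^{(n)}$ (because $\tGz$ is a semiring ideal). The same argument applied to $w$ then yields $(A^m + \bt^m I)^{k}(v+w) \lmodg \vzero$. For scalar closure, given $\alpha \in R$, we have $(A^m + \bt^m I)^k (\alpha v) = \alpha (A^m + \bt^m I)^k v \in \tGz^{(n)}$, so $\alpha v \in W$.

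For $A$-invariance, since $A$ commutes with every polynomial in $A$, in particular with $(A^m + \bt^m I)^k$, we obtain $(A^m + \bt^m I)^k (A v) = A (A^m + \bt^m I)^k v$, which is $A$ applied to a ghost vector and thus itself ghost. Hence $A v \in W$.

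All three verifications collapse to two elementary observations: $\tGz$ is a semiring ideal (so matrix-vector products preserve ghostness), and $A$ commutes with its own polynomials. No step presents a real obstacle; the proof is essentially mechanical, as hinted by the remark ``with the analogous proof'' in the statement.
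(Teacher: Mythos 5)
Your proof is correct and follows exactly the route the paper intends: the paper gives no separate argument for Proposition~\ref{wksp1}, stating only that it is proved analogously to Proposition~\ref{wksp}, and your verification (ghosts are preserved by matrix multiplication since $\tGz$ is a semiring ideal, plus $A$ commutes with polynomials in $A$) is precisely that analogue, with the exponent $k$ playing the role of the multiplicity $m$.
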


We call this space  of Proposition~\ref{wksp1} the \textbf{weak
$m$-generalized eigenspace} of $\bt$.
 Considering $A$ as a linear operator acting
on $R^{(n)}$, the weak $m$-generalized eigenspace is the union of
the ascending chain of subspaces
$$ \Gker (A^m + \bt^m  I)  \subseteq   \Gker (A ^m + \bt ^m I)^2 \subseteq \cdots .$$

The following technique gives us a method to compute weak
generalized eigenvectors.

\begin{rem}\label{eigens} Suppose $A^m$ satisfies a polynomial $f = \prod f_i,$
where each $f_i$ is monic $a_i$-primary with constant term
$\beta_i^{n_i}$, and
 for each $1 \le j \le t$ let $$ g_j = \prod _{i \ne j}
f_i  = \frac {f}{f_j}.$$ Then for each $v \in
  g_j(A)R^{(n)},$ $$(A^m+ \bt_jI)^{n_j}v  \lmodg f(A^m)v \lmodg
  \fzero,$$
implying $v$ is a weak $m$-generalized eigenvector of $A$, with
eigenvalue $\bt_j$.  \end{rem}

This gives us a weak $m$-generalized eigenspace of $A$ clearly
containing the generalized eigenspace $ V_{\bt_j}(A)$, and leads
us to explore the connection between these two notions.

\begin{lem}\label{weak1} Suppose  $v $ is a weak
m-generalized supertropical eigenvector of an irreducible matrix
$A$ of stability index $m'$, and supertropical eigenvalue $\bt $.
Suppose $q = dm' ,$ and suppose $A^{2m'} = \gamma A.$ Let
$$v' = \sum _{j  =0}^{m'-1} A^{j}\bt ^{(m'-j)}v.$$
 Then  $$\sum _{j =0}^{q} (A +\bt  I)^j v = \begin{cases}
\bt  ^ d v' &  \text{for}\quad \bt  >_{\nu} \gamma, \\
(\bt ^d)^\nu v' & \text{for}\quad \bt   \nucong \gamma, \\ \gamma
^d v' & \text{for}\quad \bt  <_{\nu} \gamma. \end{cases}$$
\end{lem}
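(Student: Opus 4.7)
The plan is to evaluate $\sum_{j=0}^{q}(A+\bt I)^{j}v$ by expanding each binomial power, reducing high powers of $A$ via the hypothesis $A^{2m'}=\gamma A$, and then using the weak-eigenvector property to collapse the resulting sum onto the support of $v'$. Since $A$ and $\bt I$ commute, the first step applies Proposition~\ref{Frob}: the binomial expansion of $(A+\bt I)^{j}$ produces $\binom{j}{i}$ copies of $A^{i}\bt^{j-i}$, which collapse to $A^{i}\bt^{j-i}$ when $\binom{j}{i}=1$ (the endpoints $i=0$ and $i=j$) and to the ghost $(A^{i}\bt^{j-i})^{\nu}$ at every interior $i$. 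Summing over $j=0,\dots,q=dm'$ and reindexing by $(i,k)$ with $i+k\le q$ produces a single sum in which each monomial $A^{i}\bt^{k}v$ appears with coefficient $\rone$ when $i=0$ or $k=0$ and as a ghost $(A^{i}\bt^{k}v)^{\nu}$ otherwise.

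The second step invokes $A^{2m'}=\gamma A$, which iterates to $A^{(2m'-1)s+r}=\gamma^{s}A^{r}$ for $1\le r\le 2m'-1$ and $s\ge 0$, in order to rewrite every $A^{i}$ with $i\ge 2m'$ as $\gamma^{s}A^{r}$. After this reduction every monomial has the form $\gamma^{s}A^{r}\bt^{k}v$ with $0\le r\le 2m'-1$, and within each ``column'' $A^{r}v$ one is left to compare the scalars $\gamma^{s}\bt^{k}$ for the admissible pairs $(s,k)$. This is where the three cases of the lemma split: when $\bt>_{\nu}\gamma$ the maximum scalar is attained at $s=0$ with $k$ as large as possible, giving $\bt^{q-r}$; when $\bt<_{\nu}\gamma$ it is attained at $s$ as large as possible, giving a scalar of the form $\gamma^{d}\bt^{m'-r}$; when $\bt\nucong\gamma$ the dominant scalars coincide $\nu$-equivalently and the column contribution becomes a ghost.

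The third step uses the weak-$m$-eigenvector hypothesis to collapse the columns with $r\ge m'$ onto the $m'$ columns $r=0,\dots,m'-1$ which appear in $v'$. From $(A^{m}+\bt^{m}I)^{k}v\lmodg\vzero$ and Proposition~\ref{Frob} one extracts $A^{mk}v\gsim\bt^{mk}v$, and combining this with the stability index $m'$ bounds the $\nu$-value of $A^{r}v$ above by that of $\bt^{r}v$ for every relevant $r\ge m'$, so the contribution in each such column is $\nu$-dominated by, or ghost-equivalent to, a contribution already present in an earlier column. After this absorption what remains, up to ghost surpassing, is exactly $\bt^{d}v'$ when $\bt>_{\nu}\gamma$, $(\bt^{d})^{\nu}v'$ when $\bt\nucong\gamma$, and $\gamma^{d}v'$ when $\bt<_{\nu}\gamma$.

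The main obstacle is the simultaneous bookkeeping of two independent sources of ghost elements in steps two and three: the Frobenius interior ghosts coming from $\binom{j}{i}\ge 2$, and the algebraic ghosts coming from $A^{r}v\gsim\bt^{r}v$. One must verify that in each of the three cases the surviving terms $\gamma^{s}A^{r}\bt^{k}v$ (whether tangible or ghost) all land in the support $\{A^{j}v:0\le j\le m'-1\}$ of $v'$ with the correct scalar, so that their total matches the stated formula up to ghost. The boundary case $\bt\nucong\gamma$ is the most delicate, since many terms of the same $\nu$-value collide and one must confirm that the aggregate is precisely the ghost of weight $\bt^{d}$ claimed, rather than a ghost of some larger weight.
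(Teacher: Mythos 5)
There is a genuine gap, on two counts. First, your third step hinges on extracting $A^{mk}v\gsim\bt^{mk}v$ from the hypothesis $(A^{m}+\bt^{m}I)^{k}v\lmodg\vzero$ ``via Proposition~\ref{Frob},'' but that inference runs in the wrong direction: Proposition~\ref{Frob} gives $(A^{m}+\bt^{m}I)^{k}\lmodg A^{mk}+\bt^{mk}I$, i.e.\ the left side equals $(A^{mk}+\bt^{mk}I)v$ \emph{plus} a ghost, and the added ghost (coming from the interior binomial terms) may dominate; so the left side being ghost tells you nothing about $A^{mk}v+\bt^{mk}v$ being ghost. This is exactly the subtle weak-versus-ordinary transition the paper flags as delicate, and in fact the paper's proof of this lemma does not use the weak-eigenvector hypothesis at all: it is obtained directly from Proposition~\ref{Frob1}, whose factored form $(A+\bt I)^{q}\lmodg\bigl(A^{m'}+\bt^{m'}I\bigr)^{d-1}\Bigl(\sum_{j=0}^{m'}A^{j}\bt^{m'-j}\Bigr)$, with both sides $\nu$-equivalent, already isolates the bracket that becomes $v'$ when applied to $v$; the relation $A^{2m'}=\gamma A$ then converts the factor $\bigl(A^{m'}+\bt^{m'}I\bigr)^{d-1}$ into the scalar $\bt^{d}$, $(\bt^{d})^{\nu}$, or $\gamma^{d}$ according to the three cases. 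Your route replaces this prepared factorization by a raw binomial expansion plus columnwise reduction, which forces you to re-derive by hand the bookkeeping that Proposition~\ref{Frob1} packages.

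Second, even granting the strategy, the decisive verifications are not carried out but only announced: you defer to ``one must verify'' both that the surviving terms $\gamma^{s}A^{r}\bt^{k}v$ land on the support of $v'$ with the stated scalars, and that in the boundary case $\bt\nucong\gamma$ the aggregate is a ghost of weight exactly $\bt^{d}$ rather than of larger $\nu$-value. Since the lemma asserts an actual equality of vectors (with the ghost marker appearing only in the middle case), this unperformed matching \emph{is} the content of the statement, so the proposal as written is a plan rather than a proof. Repairing it would mean either carrying out that case analysis in full, with the collapse of the columns $r\ge m'$ justified by the stability relation $A^{2m'}=\gamma A$ alone (not by the weak-eigenvector hypothesis), or simply invoking Proposition~\ref{Frob1} as the paper does.
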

\begin{proof}  This is  immediate from Proposition~\ref{Frob1}. \end{proof}

We are ready to show that the behavior of weak generalized
supertropical eigenvectors is ``controlled'' by the stability
index of $A$.

\begin{thm}\label{weak2} Given a matrix $A$ with
 stability  index $m'$,  suppose $v \in V_A$  is a
weak generalized supertropical eigenvector of $A$ with weak
generalized supertropical eigenvalue $\bt.$ Then $\bt$ is a
generalized supertropical eigenvalue of $A^{m'}$, and $\sum _{j
=0}^{2m'} (A +\bt I)^j v $ is already a ghost vector.\end{thm}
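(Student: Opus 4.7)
The plan is to exploit the stable block triangular form furnished by the stability index and then invoke Lemma~\ref{weak1} component by component. By definition of stability index, Theorem~\ref{stabil} places $A^{m'}$ in tangibly stable block triangular form with semi-idempotent diagonal blocks $B_i$ satisfying $B_i^2 = \bt_i B_i$, and Theorem~\ref{eigendec1} supplies the thick decomposition $V_A = \tlV_0 \oplus \tlV_1 \oplus \cdots \oplus \tlV_\tt$, where each $\tlV_i$ (for $i \ge 1$) is a generalized eigenspace of $A^{m'}$ with eigenvalue $\bt_i^{m'}$ and $\tlV_0$ is a g-annihilator space. The starting point is to write the given $v \in V_A$ as $v = v_0 + v_1 + \cdots + v_\tt$ along this decomposition.

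For the first assertion, I would match the weak eigenvalue $\bt$ to one of the $\bt_i$. Since $A$ commutes with $\bt I$, Proposition~\ref{Frob} expands $(A^{m_0} + \bt^{m_0} I)^k$ into a sum whose action on $\tlV_i$ is controlled modulo ghost by $(\bt_i^{m_0} + \bt^{m_0})^k$ acting on $v_i$ (using $A^{m'} v_i \lmodg \bt_i^{m'} v_i$ and raising to an appropriate multiple of $m'$). For this tangible scalar multiple of $v_i$ to be ghost-surpassed by $\vzero$, one needs $\bt_i^{m_0} \nucong \bt^{m_0}$, hence $\bt \nucong \bt_i$, for every $i \ge 1$ with $v_i \ne \vzero$. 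This immediately yields that $\bt$ is a generalized supertropical eigenvalue of~$A^{m'}$.

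For the second assertion, I would apply Lemma~\ref{weak1} to each block restriction with $q = 2m'$ and $d = 2$. The semi-idempotence of the restricted block supplies the lemma's hypothesis $A^{2m'} = \gamma A$ block-wise, and the first step places us in the middle case $\bt \nucong \gamma$, so Lemma~\ref{weak1} produces the ghost value $(\bt^d)^\nu v_i'$ for each $v_i$. The $v_0$ contribution is handled separately: since $A^{m'} v_0 \lmodg \vzero$, every $A^j v_0$ with $j \ge m'$ is ghost, and the lower-order terms are absorbed. Summing these ghost contributions over all components yields that $\sum_{j=0}^{2m'} (A + \bt I)^j v$ is a ghost vector.

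The main obstacle I anticipate is that Lemma~\ref{weak1} is proved for an irreducible matrix, whereas our $A^{m'}$ couples its diagonal blocks through nontrivial off-diagonal blocks $B_{i,j}$. Expanding $(A + \bt I)^j$ mixes components across different $\tlV_i$, producing cross-block terms not directly accounted for by a per-block application of the lemma. By Theorem~\ref{stab}, each off-diagonal coefficient $\bt_{i,j}$ is $\nu$-matched to the largest $\bt_{i_k}$ along a connecting path, and since the active $\bt_i$ are all $\nu$-equivalent to $\bt$, these cross-block terms should contribute further ghost rather than unexpected tangible cancellations; however, tracking this carefully in the spirit of the induction used in Theorem~\ref{eigendec} is where the real technical work of the proof will lie.
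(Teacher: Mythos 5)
Your proposal follows essentially the same route as the paper: decompose $v$ along the stable block triangular form, force $\bt \nucong \bt_j$ on each active component by noting that otherwise a tangible component survives in $(A+\bt I)^q v$, and then conclude with Lemma~\ref{weak1} (with $q=2m'$, in the case $\bt \nucong \gamma$). The cross-block coupling you flag as remaining technical work is precisely what the paper also leaves implicit---its proof simply reduces via Theorem~\ref{stab} to $v$ lying in a single $V_j$ and invokes Lemma~\ref{weak1}---so your write-up is, if anything, more explicit about that step.
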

\begin{proof}  Decomposing $v$ as in Theorem~\ref{stab}, we may assume that $v$
is some $V_j$. Take $v'$ as in Lemma~\ref{weak1}. If $\bt \ne
\bt_j,$  then we get a tangible component in $(A+ \bt I)^q v$ for
high enough powers of $q$, contrary to assumption. Hence $\bt =
\bt_j,$ and again we conclude with Lemma~\ref{weak1}.
\end{proof}

So we see that the ``difference'' between weak generalized
supertropical eigenvalues and generalized supertropical
eigenvalues occurs within twice the stability index. (We could
lower this bound with some care.)

\begin{example}\label{badex1} $ $

 Here is an example which illustrates some new pitfalls. We take
$A =   \( \begin{matrix} 0 & 0 \\
1 & 2
\end{matrix}\)$ as in
Example~\ref{badex}. Clearly $0 = 0^2$ and $4 = 2^2$ are
supertropical eigenvalues of $A^2,$ but now, in view of
\cite[Proposition 7.7]{IzhakianRowen2008Matrices}, every tangible
$\bt  \le _\nu 1$ is a supertropical eigenvalue of $$A^2 =   \( \begin{matrix} 1 & 2 \\
3 & 4
\end{matrix}\),$$ since $\bt $ is a
root of $f_{A^2} = \la^2 + 4\la + 5^\nu.$ Let us compute
 the tangible eigenvectors, using the methods of~\cite{IzhakianKnebuschRowen2009Linear}.

The singular matrix $A^2$ has adjoint $  \( \begin{matrix} 4 & 2 \\
3 & 1
\end{matrix}\)$ and thus the g-annihilator
$v= (2,1)^{\trn}$, which can be checked by noting that $A^2 v =
(3^\nu, 5^\nu)^{\trn},$ which is ghost. From this point of view,
$(2,1)^{\trn}$ is a generalized supertropical eigenvector  for $A$
having eigenvalue $-\infty$ of multiplicity 2, although it is also
a g-annihilator of $A^2$.

Note that $A^2 + \bt I = A^2$ for all $\bt <_\nu 1.$ From this
point of view, these $\bt$ are ``phony'' generalized eigenvalues
of $A$.\end{example}



\end{document}